\documentclass[11pt,reqno]{amsart}

\usepackage[margin=1.1in]{geometry}

\usepackage{amssymb,cite}
\usepackage[mathscr]{eucal}
\usepackage{eufrak}
\usepackage{xspace}
\usepackage{mathtools}
\usepackage{color}

\usepackage{hyperref}
\hypersetup{hidelinks}

\usepackage{amsfonts}
\usepackage{graphics}
\usepackage{indentfirst}
\usepackage{cite}
\usepackage{latexsym}
\usepackage{amsmath}
\usepackage{amssymb}
\usepackage[dvips]{epsfig}
\usepackage{amscd}
\usepackage{float}
\usepackage{graphicx}
\usepackage{subcaption}
\newtheorem{claim}{Claim}[section]

\newtheorem{theorem}{Theorem}[section]
\newtheorem{lemma}{Lemma}[section]
\newtheorem{proposition}{Proposition}[section]

\theoremstyle{definition}
\newtheorem{definition}{Definition}[section]

\theoremstyle{remark}
\newtheorem{remark}{Remark}[section]
  
\newcommand{\n}{\rho}
\newcommand{\ti}{\tilde}
\renewcommand{\div}{ {\rm div}}
\newcommand{\na}{\nabla }

\newcommand{\ga}{\gamma}

\newcommand{\de}{\delta}
\newcommand{\eps}{\epsilon}

\newcommand{\la}{\label}

\renewcommand{\div}{{\rm div}}

\numberwithin{equation}{section}

 \allowdisplaybreaks
 
\newcommand{\bnn}{\begin{eqnarray*}}
\newcommand{\enn}{\end{eqnarray*}}

\newcommand{\ba}{\begin{aligned}}
\newcommand{\ea}{\end{aligned}}
\newcommand{\be}{\begin{equation}}
\newcommand{\ee}{\end{equation}}

\def\p{\partial} 

\def\lap{\triangle}
 
\def\a{\alpha}
\def\om{\Omega}
\def\p{\partial}

\def\eps{\epsilon}

\def\a{\alpha}
\def\u{{\bf u}}

\def\r{\mathbb{R}^{3}}

\def\lap{\triangle}
\def\d{{\rm d}}
\def \n {\vec{n}}
\def \nu {\ti{{\bf u}}}

\makeatletter
\@addtoreset{equation}{section}
\makeatother 

\begin{document}

\title[Cahn-Hilliard-Navier-Stokes equations with singular potential]
{On weak solutions  for  the stationary Cahn-Hilliard-Navier-Stokes equations with singular potential}

\author[Z. Liang]{Zhilei Liang}
\address{School of  Mathematics, Southwestern University of Finance and Economics, Chengdu  611130,  China.}
\email{liangzl@swufe.edu.cn}

\author[S. Liu]{Sen Liu}
\address{School of  Mathematics, Southwestern University of Finance and Economics, Chengdu  611130,  China.}
\email{liu1835095756@163.com}

\author[J. Shuai]{Jiangyu Shuai}
\address{Faculty of Science, Civil Aviation Flight University of China,  Chengdu 618307, China.}
\email{shuaijy@cafuc.edu.cn}

\author[D. Wang]{Dehua Wang}
\address{Department of Mathematics, University of Pittsburgh, Pittsburgh, PA, 15260, USA.}
\email{dhwang@pitt.edu}
 
\subjclass[2020]{35Q35; 76N10;76T17; 35R35}

 \keywords{Cahn-Hilliard-Navier-Stokes, singular potential, weak solutions, stationary equations}
 
 \date{\today}
 
 \begin{abstract}

The stationary Navier--Stokes--Cahn--Hilliard equations are considered, governing the motion of a compressible, two-phase fluid mixture with a diffuse interface. The free energy density in this paper has a singular logarithmic (Flory–Huggins) form, ensuring that the mass fraction remains in the physical range and allowing for vacuum states. We prove the existence of weak solutions in a three-dimensional bounded domain under structural assumptions on the adiabatic exponent.
%
The stationary setting poses two main mathematical challenges: the absence of an energy inequality driven by the evolution process to control the singular potential, and the degeneracy of the density near the vacuum. To address these issues, we introduce a specialized regularization of the logarithmic term that eliminates the quadratic growth induced by anti-diffusion, thereby restoring compactness. 
Uniform estimates are obtained through a special choice of artificial pressure and an interpolation argument that controls the desired norm of the density. A two-level limiting process then yields a weak solution that satisfies the physical bounds almost everywhere on the support of the density. 
To our knowledge, this is the first existence result for the steady compressible Navier--Stokes--Cahn--Hilliard system that incorporates both a singular free energy and vacuum regions.

 \end{abstract}

\maketitle


\section{Introduction}
We are concerned with the stationary system of partial differential equations that describes the motion of a three-dimensional compressible two-phase fluid mixture with a diffuse interface:
\begin{eqnarray}\label{eq1.1}
\left\{\begin{aligned}
&{\rm div}(\rho\u)=0,\\
&{\rm div}(\rho\u\otimes\u)={\rm div}(\mathbb{S}_{ns}+\mathbb{S}_c-P\mathbb{I})+\rho {\mathbf g}_1+ \mathbf{g}_2,\\
&{\rm div}(\rho\u c)=\Delta\mu,\\
&\rho\mu=\rho\frac{\partial f(\rho,c)}{\partial c}-\Delta c,
\end{aligned}\right.
\end{eqnarray} 
where $\rho \ge 0$ denotes the total density of the binary fluid mixture, $c$ is the mass fraction of one component (with the physical constraint $|c| \le 1$), $\mathbf{u} = (u_1,u_2,u_3)$ is the mass-averaged velocity field, and $\mu$ is the chemical potential. The functions ${\textbf g}_1$ and $\mathbf{g}_2$ are the external body forces: the term $\rho \mathbf{g}_1$ acts on the bulk mass, while $\mathbf{g}_2$ acts on the fluid boundary. The viscous stress tensor $\mathbb{S}_{ns}$ and the capillary (Korteweg-type) stress tensor $\mathbb{S}_c$ are given by
\begin{eqnarray}\label{eq1.2}
\mathbb{S}_{ns}=\lambda_1(\na\u+(\na\u)^T)+\lambda_2 {\rm div}\u\mathbb{I}, \quad 
\mathbb{S}_c=-\na c\otimes\na c+\frac{1}{2}|\na c|^2\mathbb{I},
\end{eqnarray}
where $\mathbb{I}$ is the identity matrix. The viscosity coefficients $\lambda_1$ and $\lambda_2$ satisfy the physical constraints $\lambda_1 > 0$ and $2\lambda_1 + 3\lambda_2 \ge 0$. The Helmholtz free energy density $f(\rho,c)$ determines the total pressure $P$ via the thermodynamic relation $$P(\rho,c) = \rho^2\frac{\partial f(\rho,c)}{\partial \rho}.$$
 Typically, the free energy decomposes as
 \bnn f(\rho,c)=\int_{1}^{\rho}\frac{p(s)}{s^{2}}\d s+f_{mix}(\rho, c),\enn where $p$ is the inner pressure and $f_{mix}(\rho,c)$ is the mixing contribution. 
 In this work, we consider a physically relevant singular free energy of the form
  \be\label{eq1.4}\ba
  f(\rho,c)= \rho^{\gamma-1}+H(c) \ln\rho + F(c),
\ea\ee
where $\ga>1$ is the adiabatic exponent,  $H(c)$ is a smooth function and  $F(c)$ is a logarithmic (also known as Flory-Huggins type) potential of the form   
\be\la{eq1.4x}  F(c)=\frac{\theta_0}{2}\left((1+c)\ln(1+c)+(1-c)\ln(1-c)\right)-\frac{\theta_c}{2}c^2,\quad c\in (-1,1).\ee 
We note that the logarithmic term \eqref{eq1.4x} represents the mixture's entropy, and the temperature restriction $0<\theta_0<\theta_c$ ensures that a double-well form actually occurs. If $F$ is convex, the components are miscible and diffusion occurs; if $F$ is non-convex, there is a spinodal region in which anti-diffusion prevents the fluids from mixing completely.

We are interested in the boundary value problem of \eqref{eq1.1} in a three-dimensional bounded domain $\Omega\subset \mathbb{R}^3$. 
The system  \eqref{eq1.1} is closed by the total mass and relative mass constraints 
\begin{eqnarray}\label{eq1.8}
\int_{\om}\rho(x)\d x=m_1>0,\quad \int_{\om}\rho(x)c(x)\d x=m_2,
\end{eqnarray}
where $m_1, m_2$ are prescribed constants, and by the boundary conditions
\begin{eqnarray}\label{eq1.9}
\u=0,\quad\frac{\partial c}{\partial \n}=0,\quad \frac{\partial\mu}{\partial \n}=0\quad{\rm on}\,\, \partial\Omega,
\end{eqnarray}
where $\n=\n(x)$ is the outward unit normal  at   boundary point $x\in \p \om$.



The Cahn--Hilliard equation, when combined with the Navier--Stokes equations, serves as a fundamental diffuse-interface model for binary fluid flows, applicable from turbulent multi-phase dynamics to microfluidics. The dynamic version of this coupled system is known as ``Model H'' and was introduced by Hohenberg and Halperin \cite{hh} for density-matched incompressible fluids.  A comprehensive mathematical theory has been developed for this evolutionary system. Abels \cite{A,AA} proved the global existence of weak solutions. Giorgini, Miranville, and Temam \cite{GMT} addressed uniqueness, regularity, and strong solutions. Miranville and Temam \cite{MT} showed the existence for a Cahn--Hilliard--Navier--Stokes system based on the Oono model with singular potential. Further results incorporating Flory--Huggins potentials can be found in \cite{F,GGP,HKP}.
 
   
   A preliminary compressible variant was introduced by Lowengrub and Truskinovsky \cite{lt}, wherein the total density is a function of the concentration, $\rho = \rho(c)$, and the chemical potential assumes a more complex form that incorporates the pressure and the divergence of the velocity:
\bnn \mu=f_{0}'(c)-\frac{\rho'(c)}{\rho^{2}}\left(p-\big(\lambda_{1}+\frac{2\lambda_{2}}{3}\big)\div \u\right)-\frac{\div(\rho \na c)}{\rho},\enn
where $f_{0}(c)$ is the Helmholtz free energy.  
%
To handle the lack of compactness for $\nabla c$ in vacuum regions, Abels and Feireisl \cite{AF} introduced a modified compressible model:
\begin{eqnarray} \la{time}
\left\{\begin{aligned}
&\p_{t}\rho +{\rm div}(\rho\u)=0,\\
&\p_{t}(\rho \u)+{\rm div}(\rho\u\otimes\u)={\rm div}(\mathbb{S}_{ns}+\mathbb{S}_c-P\mathbb{I}),\\
&\p_{t}(\rho c)+{\rm div}(\rho\u c)=\Delta\mu,\\
&\rho\mu=\rho\frac{\partial f(\rho,c)}{\partial c}-\Delta c.
\end{aligned}\right.
\end{eqnarray}  
In \cite{AF}, Abels and Feireisl proved the existence of weak solutions to \eqref{time} under the assumption of a sufficiently regular potential. More recently, Basarić and Giorgini \cite{BG} extended this analysis to include singular free energies.

 The system  \eqref{eq1.1} studied here is the stationary counterpart of \eqref{time}. When the external forces vanish ($\mathbf{g}_1 = \mathbf{g}_2 = 0$), its solutions align with the critical points of the total energy functional 
\be\la{time1} \mathcal{E}(t)=\int_{\om}\left(\frac{1}{2}\rho |\u|^{2}+\rho f(\rho,c)+\frac{1}{2}|\na c|^{2}\right)(x,t) \d x,\quad t\ge0.\ee
 The stationary problem differs fundamentally from the evolutionary one; it is not just a parallel case or a broader generalization. For the time-dependent equations \eqref{time}, the {\it a  priori} energy inequality $\mathcal{E}(t) \le \mathcal{E}(0)$ provides a crucial tool for the mathematical analysis. In the stationary setting, no such time-evolution bound is available, and the energy balance by itself yields far less information. When the mixing free energy $f_{\text{mix}}(\rho,c)$ is regular, namely,   the functions in \eqref{eq1.4} satisfy 
 \be\la{lw} |H(c)|+|H'(c)|+ |F(c)|+|F'(c)|\le C,\quad c\in \mathbb{R},\ee
 Liang-Wang\cite{LW,LW2} proved the existence of weak solutions for equations \eqref{eq1.1}  if the adiabatic exponent $\ga$  satisfies  $\ga>4/3.$   For stationary Cahn--Hilliard--Navier--Stokes equations of incompressible flows, we refer to  \cite{AW,BDMM,BDM,KPS,KPS2}.
%
 For the steady compressible Navier--Stokes equations (without phase separation), Lions \cite{Lions} established the existence of a nontrivial equilibrium state for $\gamma \ge 5/3$. Subsequently, the admissible range of $\gamma$ was extended to $\gamma > 1$ in various settings; see \cite{FNP,FSW,PW}  for bounded domains with Dirichlet boundary conditions and \cite{BB,JZ} for periodic domains. Comprehensive treatments of steady compressible fluid flows can be found in the monographs  \cite{MPZ,NS}.

However, the regular mixture entropy violates the physical constraint $c \in [-1,1]$. A physically consistent Helmholtz free energy---particularly one with a singular logarithmic potential---is therefore both more relevant for applications and more challenging mathematically. In this paper, we assume a singular free energy and establish the existence of weak solutions to the stationary problem \eqref{eq1.1}-\eqref{eq1.9}.
%
\begin{definition}[Weak solution]\la{de} Let  exponents $p>\frac{6}{5}$ and   
$\theta>0$.  A quadruple  $(\rho,\u,\mu,c)$ is a weak solution to problem   \eqref{eq1.1}-\eqref{eq1.9}, if 
\bnn \ba 0\le \rho\in L^{\gamma+\theta}(\Omega),\quad   \u\in H_0^1(\Omega;\r),\quad (\mu, c)\in H^1(\Omega)\times W^{2,p}(\Omega),\ea\enn
and  the  following hold:
\begin{enumerate}\addtolength{\itemsep}{+0.8\baselineskip}
\item  Equation $\eqref{eq1.1}_1$  is  satisfied in  distributional  sense.  
Extended by zero outside $\Omega$, the pair $(\rho,\u)$ satisfies the renormalized continuity equation
\bnn {\rm div}(b(\rho)\u)+(b'(\rho)\rho-b(\rho)){\rm div}\u=0\quad {\rm in}\,\,\,\mathcal{D}'(\om),\enn
for any $b\in C^1(\mathbb{R})$ with $b'(z)=0$ for large $z$.
 \item 
 The following weak formulation of momentum and concentration equations holds:
%
$$
 \int_{\Omega}\Bigl(\rho \mathbf{u}\otimes \mathbf{u} + \rho^{2}\frac{\partial f(\rho,c)}{\partial\rho}\mathbb{I} - \mathbb{S}_{ns} - \mathbb{S}_{c}\Bigr):\nabla \Phi\,\mathrm{d}x = \int_{\Omega}(\rho \mathbf{g}_1 + \mathbf{g}_2)\cdot \Phi\,\mathrm{d}x,$$
 for any $\Phi\in C_0^\infty(\Omega;\mathbb{R}^3)$; and
$$\int_{\Omega}(\rho c\mathbf{u} - \nabla \mu)\cdot \nabla \phi\,\mathrm{d}x = 0,\quad
\int_{\Omega}\Bigl(\rho \mu - \rho\frac{\partial f(\rho,c)}{\partial c}\Bigr)\phi\,\mathrm{d}x = \int_{\Omega}\nabla c\cdot \nabla \phi\,\mathrm{d}x,$$
for any $\phi\in C^\infty(\overline{\Omega})$.

\item 
The mass constraints \eqref{eq1.8} are satisfied, and the following energy inequality holds:
\be\la{ls40}\int_{\om}(\lambda_1|\na\u|^2+(\lambda_1+\lambda_2)({\rm div}\u)^2+|\na\mu|^2)\d x\leq \int_{\om}(\rho  \mathbf{g}_1+ \mathbf{g}_2)\cdot\u\d x.\ee
\end{enumerate}
\end{definition}

Our main result is the following existence theorem.

\begin{theorem}\label{t} 
Let $\Omega\subset\mathbb{R}^3$ be a bounded, simply connected domain with $C^2$ boundary. Assume the external forces satisfy
\begin{eqnarray}\label{eq1.11g} 
{\mathbf{g}_1},\,\,{\mathbf{g}_2}\in L^\infty\left(\Omega;\mathbb{R}^3\right)
\end{eqnarray}
 and that $H(c)$ in \eqref{eq1.4} is  constant, i.e., 
\begin{eqnarray}\label{eq1.12}
  H(c)=H=\mathrm{const}>0.
\end{eqnarray} 
Suppose the adiabatic exponent $\gamma$ satisfies
\begin{eqnarray}\label{eq1.11} 
\gamma>\frac{3}{2}\,\,\,\,\, {\rm if}\,\,\,\na\times {\mathbf{g}_1}=0,\quad\quad \gamma>\frac{5}{3}\,\,\,\,\,  {\rm if}\,\, \na\times {\mathbf{g}_1}\neq 0,
\end{eqnarray}
 and the total and relative masses satisfy
\begin{eqnarray}\label{aver}
m_1>0, \quad m_2\in (-m_{1},\,m_{1}).
\end{eqnarray} Then,  the problem \eqref{eq1.1}-\eqref{eq1.9} admits a weak solution $(\rho,\u,\mu,c)$ in the sense of Definition 1.1. Moreover, the concentration satisfies the physical bounds
\bnn-1<c(x)<1\quad a.e.\,\,\,{\rm in}\,\,\,\,\{x\in \om:\,\rho(x)>0\}\enn
and the singular term is well-defined:
\be\la{kk}
\rho\frac{\partial f(\rho,c)}{\partial c}=
\begin{cases}
\rho F'(c), & \text{if } \rho > 0, \\
0, & \text{if } \rho = 0.
\end{cases}
\ee
\end{theorem}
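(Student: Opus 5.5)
We will construct the solution by a two-level approximation, in the spirit of the abstract, and then pass to the limit using compactness arguments for steady compressible flows. \emph{Level one} regularizes the singular part of the free energy. We replace the entropy part $\frac{\theta_0}{2}((1+c)\ln(1+c)+(1-c)\ln(1-c))$ by a convex $C^2$ function $F_{1,\delta}$ defined on all of $\mathbb{R}$. It agrees with the logarithm on $[-1+\delta,1-\delta]$ and grows quadratically outside. We also replace the anti-diffusive term $-\frac{\theta_c}{2}c^2$ by a truncation $G_\delta(c)$ whose derivative is bounded, e.g.\ $G_\delta'(c)=-\theta_c\,\mathrm{sign}(c)\min\{|c|,2\}$. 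The resulting $F_\delta=F_{1,\delta}+G_\delta$ then has a monotone part plus a bounded-derivative part, so the quadratic growth caused by $-\theta_c c^2$ no longer destroys coercivity.

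\emph{Level two} adds the standard elliptic regularizations: a term $\varepsilon\rho-\varepsilon\Delta\rho=\varepsilon h$ in the continuity equation, with $h=m_1/|\Omega|$ and Neumann data. We also use the artificial pressure $\delta(\rho^{\beta}+\rho^2)$ with $\beta$ large, and a damping term $\varepsilon\rho c$ in the concentration equation. Since $H$ is constant, $\rho\,\partial_c f=\rho F_\delta'(c)$, and the $H\ln\rho$ term contributes only $H\rho$ to $P$. For fixed $(\varepsilon,\delta)$ we obtain a solution by a Galerkin/Leray--Schauder fixed point as in Liang--Wang \cite{LW,LW2}, the regularized potential now satisfying bounds of type \eqref{lw}.

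The uniform estimates come next. Testing the momentum equation with $\u$, the $c$-equation with $\mu$, and the chemical potential relation with $\mathrm{div}(\rho\u)$-type terms gives the energy identity \eqref{ls40}, with the Korteweg and pressure work cancelling. Its right side is $\int\rho\mathbf g_1\cdot\u$, so everything hinges on a bound for $\|\rho\|_{L^{q}}$ with $q\ge 6/5$. We get it by testing the momentum equation with $\mathcal B(\rho^{\theta}-\overline{\rho^\theta})$, where $\mathcal B$ is the Bogovskii operator. This yields $\|\rho\|_{L^{\gamma+\theta}}$ in terms of $\|\u\|_{H^1}^2$, $\|\rho\|$ norms and $\|\na c\|_{L^2}$-Korteweg terms. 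The latter are controlled via the $\mu$-equation: testing with $c$ and using monotonicity of $F_{1,\delta}'$ together with the mass constraint $\int\rho c=m_2\in(-m_1,m_1)$ bounds $\|\na c\|_{W^{1,p}}$ and $\int\rho|F'_{1,\delta}(c)|$. We then close by interpolating $\|\rho\|_{L^{6/5}}$ between $L^1$ (mass) and $L^{\gamma+\theta}$. The restriction $\gamma>3/2$ arises there. When $\nabla\times\mathbf g_1=0$ we write $\mathbf g_1=\nabla\psi$ and absorb $\rho\mathbf g_1$ into the pressure, which gains a power; otherwise we need $\gamma>5/3$.

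The limits go in order $\varepsilon\to0$, then $\delta\to0$, following Lions--Feireisl. We need weak continuity of the effective viscous flux $P-(2\lambda_1+\lambda_2)\mathrm{div}\u$ and renormalization, the latter valid since $\rho\in L^2$ when $\gamma+\theta\ge2$. Together with the compactness of $c$ from the $W^{2,p}$ bound, these give strong convergence of $\rho$ via the oscillation defect measure. The \emph{main obstacle} is the last step: identifying $\rho F_\delta'(c_\delta)\to\rho F'(c)$ and proving $|c|<1$ on $\{\rho>0\}$. I would use the uniform bound on $\int\rho_\delta|F_{1,\delta}'(c_\delta)|$. Since $F_{1,\delta}'$ blows up near $\pm1$, Fatou's lemma on $\{\rho>0\}$ forces $\rho|F'(c)|\in L^1$, hence $|c|<1$ a.e.\ there. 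On $\{\rho=0\}$ the term vanishes in the limit because $\rho_\delta F'_\delta(c_\delta)$ is equi-integrable: the bound comes from the $\mu$-equation in $L^{s}$ with $s>1$, obtained by testing with $F'_{1,\delta}(c_\delta)$ truncated. This yields \eqref{kk}.
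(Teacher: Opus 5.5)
There is a genuine gap at the first approximation level. You assert that $F_\delta=F_{1,\delta}+G_\delta$ satisfies bounds of type \eqref{lw}. But $F_{1,\delta}$ is extended quadratically outside $[-1+\delta,1-\delta]$, with $F_{1,\delta}''=\theta_0/(\delta(2-\delta))$ there. Hence $F_\delta$ grows quadratically and $F_\delta'$ grows linearly with slope of order $1/\delta$, and truncating $-\frac{\theta_c}{2}c^2$ does not change that. Coercivity was never the obstruction: the plain Taylor extension is already convex at infinity for small $\delta$. What the Schaefer bound needs is at most linear growth of $F_\delta$. In the a priori estimate for fixed points, the capillary terms leave the residual $\int\rho\mathbf{u}\cdot\nabla c\,F_\delta'(c)=-\int{\rm div}(\rho\mathbf{u})F_\delta(c)$, which does not vanish at the regularized level. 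With your continuity regularization it equals $\varepsilon\int(\rho-h)F_\delta(c)+\varepsilon\int\nabla\rho\cdot\nabla c\,F_\delta'(c)$. The paper controls the analogous terms only because $|F_\delta|\le C(\delta)(1+|c|)$ and $|F_\delta'|\le C(\delta)$, see \eqref{ls1}. With $|F_\delta'(c)|\sim|c|/\delta$, the last integral needs $|\nabla\rho|\,|c|\,|\nabla c|\in L^1$, and $-\varepsilon h\int F_\delta(c)$ becomes quadratic in $c$ with a $1/\delta$ coefficient. The $\varepsilon$-level dissipation (essentially $\nabla\rho,\nabla c\in L^2$) does not control these, so the Liang--Wang argument does not apply as claimed. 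This is exactly why the paper builds $f_2^\delta$ with $f_2^{\delta''}\equiv\theta_c$ for $|c|>1+\delta$: the convex extension cancels the anti-diffusive quadratic, so $f_2^\delta-\frac{\theta_c}{2}c^2$ is linear and $\partial_c f^\delta$ is bounded for fixed $\delta$.

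The natural repair is to give $F_{1,\delta}$ linear growth. With your truncated $G_\delta$, $F_\delta'$ is then bounded, of size about $\frac{\theta_0}{2}\ln\frac1\delta$ for $|c|>1$. But this moves the difficulty into the $\delta$-uniform estimate, which you only assert. Testing $\rho\mu=\rho F_\delta'(c)-\Delta c$ with $c$ gives $\int\rho\mu c=m_2(\mu)_\Omega+\int\rho c(\mu-(\mu)_\Omega)$. The condition $|m_2|<m_1$, together with $\int\rho\mu=\int\rho F_\delta'(c)$ and $F_\delta'(c)c>0$ for $|c|>c^*$, handles only the first piece. The second needs $\rho c\in L^{6/5}$ on $\{|c|>1\}$, where the available smallness is only $\|\rho\|_{L^1(\{|c|\ge1+\delta\})}\lesssim(\ln\frac1\delta)^{-1}\int\rho F_\delta'(c)c$. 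Interpolating leaves remainders $(\ln\frac1\delta)^{-a}\|\rho\|_{L^q}^b$ with $q$ above the $L^{\gamma s}$ range controlled by the pressure, see \eqref{eq230}--\eqref{eq231}. The paper absorbs these with the artificial pressure $(\ln\frac1\delta)^{-1}\rho^{11}$, whose coefficient is matched to that logarithm. Your $\delta(\rho^\beta+\rho^2)$ cannot do this, because after Young's inequality a factor $\delta^{-\kappa}$ appears, which beats any negative power of $\ln\frac1\delta$.

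With your quadratic extension this step would in fact be simpler, since then $\int_{\{|c|>1\}}\rho c^2\lesssim(\ln\frac1\delta)^{-1}\int\rho F_\delta'(c)c$. That is the tension: the growth that helps here is what the fixed-point step cannot accommodate.

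Lesser points:
\begin{itemize}
\item The Korteweg term in the Bogovskii test needs $\|\nabla c\|_{L^{2s}}$ with $s>1$. This does not come from testing with $c$. It comes from $\int\rho|F_\delta'(c)|^2\lesssim1+\int\rho\mu^2$ (testing with $F_\delta'(c)$) plus an interpolation with exponent $\frac{6\gamma}{2\gamma-3}(1-\frac1s)\to0$.
\item The threshold $\gamma>3/2$ comes from $\rho|\mathbf{u}|^2\in L^s$, $s>1$. Here $\mathbf{u}$ is bounded in $H^1$ because $\int\rho\nabla\psi\cdot\mathbf{u}=-\int\psi\,{\rm div}(\rho\mathbf{u})=0$, not because $\rho\mathbf{g}_1$ is absorbed into the pressure.
\item For $\gamma\in(3/2,5/3)$ one has $\gamma+\theta\le3\gamma-3<2$. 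So renormalization of the limit must come from the truncation/oscillation argument, not from $\rho\in L^2$.
\end{itemize}
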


\begin{remark} 
The mixing free energy density is motivated by the logarithmic Helmholtz energy typical for Cahn--Hilliard models (cf. \cite{AF,BG}):
\bnn\ba 
  f_{mix}(\rho,c) &=\frac{\theta_{0}}{2} \left( (1+c)\ln(\rho(1+c))+ (1-c)\ln(\rho(1-c))\right)-\frac{\theta_{c}}{2} c^{2}\\
  &=H(c)\,\ln \rho +F(c)\\
  &=\underbrace{\theta_{0}}_{H(c)}\,\ln \rho +\underbrace{\frac{\theta_0}{2}\left((1+c)\ln(1+c)+(1-c)\ln(1-c)\right)-\frac{\theta_c}{2}c^2}_{F(c)},\quad c\in (-1,1).
\ea\enn   
More generally, we may allow $F$ to satisfy, for some $\alpha_0$,
  \bnn \lim_{c\rightarrow\pm1}F'(c)=\pm\infty,\quad \lim_{c\rightarrow\pm1}F''(c)=+\infty,\quad F''(c)\ge \a_{0},\enn
and assume $H$ is bounded for some positive constants $\underline{H}$ and $\overline{H}$, 
\begin{eqnarray}\label{eq1.12x}
0\le \underline{H}< H(c),\,H'(c)\leq \overline{H}<\infty.
\end{eqnarray} 
\end{remark}

\begin{remark} Based on results for the single-fluid steady compressible Navier--Stokes equations  \cite{FSW,MPZ,PW},   one expects that the range of adiabatic exponent $\ga$ in \eqref{eq1.11} can be further extended.   However,   it seems difficult  to deal with  the  weighted estimate on $c$ near the boundary, because the singularity of the free energy only yields control on $\|\sqrt{\rho} \frac{\p f(\rho,c)}{\p c}\|_{L^{2}}$.
\end{remark}

\begin{remark} 
To the best of our knowledge, this is the first existence result for weak solutions of the stationary compressible Navier--Stokes--Cahn--Hilliard system that simultaneously allows for a singular free energy potential and the presence of vacuum states.
\end{remark}

{\bf Outline of the proof and key ideas.}
We establish Theorem \ref{t}   via a two-fold weak convergence procedure applied to a carefully constructed approximate system. The main difficulties arise from the singular nature of the free energy and the possible degeneracy of the density. We outline our strategy below.

%
 For the evolutionary problem, the {\it a priori} energy inequality $\mathcal{E}(t) \le \mathcal{E}(0)$ provides a uniform $L^2$ bound on $c$, making Taylor expansion regularization of the singular potential feasible (see, e.g., \cite{GT,BG,CFG}). This approach fails for the stationary problem because \eqref{eq1.1} alone yields no such control, and the $L^1$ bounds from \eqref{eq1.8} are insufficient to control the nonlinearity in the potential. Our idea is to introduce a special approximate function $f_2^\delta(c)$ for the singular logarithmic term, constructed so that the difference $f_2^{\de}(c)-\frac{\theta_{c}}{2}c^{2}$ increases only linearly as $|c|$ becomes large. This cancellation effectively offsets the quadratic growth of the anti-diffusion term. See Remark \ref{r2.2} for further details.
   
We first introduce and analyze the regularized approximate system \eqref{eq2.7}-\eqref{eq2.8}, and prove the existence of solutions via a fixed point argument, following  \cite[Theorem 4.1]{LW}. The proof, however, is not a straightforward extension: one must carefully pair the function $f_2^\delta$ with the quadratic term $\frac{\theta_c}{2}c^2$ to ensure they cancel each other.
 
Then we need to establish the estimates uniform in $\delta$  for the approximate solutions (Proposition~4.1). A major challenge is controlling the $L^{\frac{3}{2}}$ integrability of the density. Our strategy  is  to choose a specific artificial pressure, and use the observation
\[
\frac{\partial f_2^\delta}{\partial c} - \theta_c c = O\Bigl(\ln \frac{1}{\delta}\Bigr) \quad \text{for } |c| > 1+\delta,
\]
and interpolation to obtain an estimate of the form
\[
\|\rho\|_{L^{3/2}(\{|c|\ge 1+\delta\})} \le \frac{1}{2}\|\rho (f_2^{\delta'} - \theta_c c)c\|_{L^1} + \frac{1}{2}\Bigl(\ln\frac{1}{\delta}\Bigr)^{-1}\|\rho\|_{L^{11}}^{11} + C,
\]
which allows us to close the estimates for all $\gamma > 3/2$. 
Another important observation is that condition \eqref{aver}    is also essential for obtaining $L^p$ bounds on $\mu$.

%
 Finally, when passing to the limit as the regularization parameters vanish, we must ensure that the limiting concentration $c$ remains strictly within the physical interval $(-1,1)$ on non-vacuum regions. The coupled Navier--Stokes--Cahn--Hilliard structure makes this delicate, especially since the pressure depends jointly on $\rho$ and $c$ and lacks monotonicity in $\rho$ for all $c$. Adapting techniques from \cite{Lions, FNP,MPZ,NS} and \cite{LW,LW2}, we prove strong convergence of the density. Then, following ideas from \cite{BG}, we establish that $c\in(-1,1)$ almost everywhere where $\rho > 0$, thereby verifying the physical constraint and ensuring the convergence of $F^\delta(c^\delta)$ and the well-definedness of $\partial_c f(\rho,c)$ in the non-vacuum domains.
  
 The rest of the paper is organized as follows. 
In Section~2, we construct a carefully designed regularization of the singular logarithmic potential. This approximation is essential for the stationary problem, as it offsets the quadratic growth of the anti-diffusion term and restores the necessary compactness. 
Section~3 focuses on the first-level approximate system with parameters $\epsilon$ and $\delta$. We prove the existence of strong solutions using a fixed-point argument and establish uniform-in-$\epsilon$ estimates. Taking the limit $\epsilon \to 0$, we obtain a weak solution to an intermediate system with the regularized free energy.
In Section~4, we derive uniform-in-$\delta$ estimates for the solutions of this intermediate system. A crucial step is to control the $L^{3/2}$-norm of the density via a delicate interpolation argument that exploits the specific structure of the regularized potential. We then take the $\delta \to 0$ limit to recover the original singular free energy.
Section~5 contains the proof of our main result, Theorem~1.1. We establish the strong convergence of the density and show that the limiting concentration $c$ satisfies the physical bounds $-1 < c < 1$ almost everywhere on the support of $\rho$. 
Finally, technical lemmas and auxiliary estimates are collected in the Appendix.

\textbf{Notation:} 
 %
For two vectors $\mathbf{a},\mathbf{b}\in\mathbb{R}^{3}$, we write $\mathbf{a}\otimes \mathbf{b} = (a_{i}b_{j})_{3\times 3}$. 
Let $C_0^\infty(\Omega;\mathbb{R}^3)$ be the set of all smooth and compactly supported vector-valued functions from $\Omega$ to $\mathbb{R}^3$, and $C_0^\infty(\Omega) = C_0^\infty(\Omega;\mathbb{R})$. 
$C^\infty(\overline{\Omega})$ denotes the set of uniformly smooth functions on $\overline{\Omega}$. 
For any $p\in[1,\infty]$ and $k\in\mathbb{N}$, we use $W^{k,p}(\Omega;\mathbb{R}^3)$ and $W^{k,p}(\Omega)$ to denote the Sobolev spaces of functions valued in $\mathbb{R}^3$ and $\mathbb{R}$, respectively. 
We write $H^k = W^{k,2}$, $L^p = W^{0,p}$, and let $W_0^{k,p}$ and $W_n^{k,p}$ indicate the subspaces with zero Dirichlet and zero Neumann boundary conditions, respectively. 
$|\Omega|$ denotes the Lebesgue measure of $\Omega$, and the average of a function is $(f)_\Omega = \frac{1}{|\Omega|}\int_{\Omega}f\,\mathrm{d}x$. 
We also denote by $\int f = \int_{\Omega}f(x)\,\mathrm{d}x$ and by $\overline{f}$ the weak limit of a sequence $(f^\delta)$. 
The characteristic function of a set $A\subset \Omega$ is $\mathbf{1}_A$. 
For asymptotic notation, we write $f = O(g)$ as $x\to x_0$ if there exists a constant $C$ such that $|f(x)|\le C|g(x)|$ for all $x$ sufficiently close to $x_0$, and $f = o(g)$ as $x\to x_0$ if $\lim_{x\to x_0} f(x)/g(x) = 0$.

\bigskip
  
\section{Regularization  and  Approximate System}  

As mentioned earlier, the usual Taylor expansion fails for the stationary problem \eqref{eq1.1} because there is no {\it a priori} $L^2$ bound on $c$ to control the quadratic nonlinearity $\frac{\theta_{c}}{2}c^{2}$. So we need a new approximation for the singular potential. 
From   \eqref{eq1.4},   we write 
\bnn F(c)= f_2(c)-\frac{\theta_c}{2}c^2,\enn
where $f_2(c)$ is a  singular function of  the form 
\be\la{f} f_2(c)=\frac{\theta_0}{2}\left((1+c)\ln(1+c)+(1-c)\ln(1-c)\right)\ee
that is approximated by the following   regularized extension:
 \begin{eqnarray*}
f_{2}^{\delta}(c)=\left\{\begin{aligned}
&f_{2}(c),&& 0\leq c\leq 1-\delta,\\[4pt]
&\frac{\theta_0}{2\delta(2-\delta)}(c-1+\delta)^2+\frac{\theta_0}{2}\ln\left(\frac{2}{\delta}-1\right)(c-1+\delta)+f_2(1-\delta),&& 1-\delta< c\leq 1,\\[4pt]
&\frac{\theta_c\delta(2-\delta)-\theta_0}{6\delta^2(2-\delta)}(c-1)^3+\frac{\theta_0}{2\delta(2-\delta)}(c-1)^2\\
&\quad +\left(\frac{\theta_0}{2-\delta}+\frac{\theta_0}{2}\ln\left(\frac{2-\delta}{\delta}\right)\right)(c-1)+\frac{\delta\theta_0}{2(2-\delta)}+\theta_0\ln(2-\delta),&&  1< c\leq 1+\delta,\\[4pt]
&\frac{\theta_c}{2}(c-1-\delta)^2+\left(\frac{\theta_c\delta}{2}+\frac{3\theta_0}{2(2-\delta)}+\frac{\theta_0}{2}\ln\left(\frac{2-\delta}{\delta}\right)\right)(c-1-\delta)&&\\
&\quad +\frac{\theta_c\delta^2}{6}+\frac{11\theta_0\delta}{6(2-\delta)}+\frac{\theta_0\delta}{2}\ln\left(\frac{2-\delta}{\delta}\right)+\theta_0\ln(2-\delta),&&1+\delta< c.
\end{aligned}\right.
\end{eqnarray*}
Additionally, we assume  
$$f_{2}^{\delta}(c)=f_{2}^{\delta}(-c)\quad \forall \,c\in (-\infty,\infty),$$ 
so that   $f_{2}^{\delta}(c)$ is well defined  when $c\le 0$.
\medskip 

\begin{remark} The function  $f_{2}^{\delta}(c)$  is $C^{2}$  over  $(-\infty,
\infty).$ We can easily compute its first-order  derivative 
\begin{eqnarray*}
f_{2}^{\delta '}(c)=\left\{\begin{aligned}
&\frac{\theta_0}{2}\ln\left(\frac{1+c}{1-c}\right), &&0 <c\leq 1-\delta,\\
&\frac{\theta_0}{\delta(2-\delta)}(c-1+\delta)+\frac{\theta_0}{2}\ln\left(\frac{2-\delta}{\delta}\right),&& 1-\delta<c\leq 1,\\
&\frac{\theta_c\delta(2-\delta)-\theta_0}{2\delta^2(2-\delta)}(c-1)^2+\frac{\theta_0}{\delta(2-\delta)}(c-1)+\frac{\theta_0}{2-\delta}+\frac{\theta_0}{2}\ln\left(\frac{2-\delta}{\delta}\right),&& 1<c\leq 1+\delta,\\
&\theta_c(c-1-\delta)+\frac{\theta_c\delta}{2}+\frac{3\theta_0}{2(2-\delta)}+\frac{\theta_0}{2}\ln\left(\frac{2-\delta}{\delta}\right),&& 1+\delta<c,
\end{aligned}\right.
\end{eqnarray*} 
 and  second-order  derivative  
\bnn
f_{2}^{\delta ''}(c)=\left\{\begin{aligned}
&\frac{\theta_0}{1-c^2},&& 0< c\leq 1-\delta,\\
&\frac{\theta_0}{\delta(2-\delta)},&& 1-\delta<c\leq 1,\\
&\frac{\theta_c\delta(2-\delta)-\theta_0}{\delta^2(2-\delta)}(c-1)+\frac{\theta_0}{\delta(2-\delta)},&& 1< c\leq 1+\delta,\\
&\theta_c,&&  1+\delta<c.
\end{aligned}\right.
\enn
Furthermore,  we have the following claims:
\begin{itemize}
\item  $f_{2}^{\delta}(c)$ converges uniformly to $f_2(c)$ in any compact set of $(-1,1)$.

\item   $f_{2}^{\delta '}(c)$ converges uniformly to $f_2'(c)$ in any compact set of $(-1,1)$.
\end{itemize}
Indeed, for any $-1<a<b<1$,   we  may  choose   $\delta>0$ so small to satisfy 
\[\delta<\min\{1+a,1-b\},\]
then,   $$f_{2}^{\delta}(c)=f_{2}(c),\quad \forall c\in [a,b].$$
Similar argument runs for     $f_{2}^{\delta '}(c)$. 
\end{remark}

\begin{remark}\la{r2.2}The construction of    $f_{2}^{\delta}(c)$  is  to offset the nonlinear term $\frac{\theta_{c}}{2}c^{2}$ appeared in  \eqref{eq1.4}.    In particular,  
\begin{itemize}
\item The function $f_{2}^{\delta}(c)-\frac{\theta_c}{2} c^2$  is  linear  with respect to  $c$, if $|c|$ is properly large.
\item  Let 
\begin{eqnarray}\label{eq2.3}
c^*=1-\frac{2}{1+\exp\{2\theta_c/\theta_0\}}.
\end{eqnarray}
Then there is a constant $M=M(\theta_{0},\theta_{c})$, independent of $\de$, such that  
\be\la{dan}\left|f_2^{\delta'}-\theta_cc\right|<M\,\,\,\,{\rm if}\,\, |c|\leq c^*, \quad\,\,\,\,(f_2^{\delta'}-\theta_cc)c>0\,\,\,\,{\rm if}\,\, |c|>c^*.\ee

\item  The function $f_{2}^{\delta ''}(c)-\theta_{c} $  is zero when  $|c|>1+\de$. If $|c|<1+\delta$,  
  there is a constant $C$  independent of $\delta$  such that
\be\label{ac**}
|f_2^{\delta''}-\theta_c|<C\,\,\, {\rm if}\,\,|c|\leq \sqrt{1-\frac{\theta_0}{\theta_c}},\quad \quad f_2^{\delta''}-\theta_c\geq 0\,\,\,{\rm if}\,\,\sqrt{1-\frac{\theta_0}{\theta_c}}\le  |c|\le 1+\de.\ee
\end{itemize}
The following graphs  illustrate the  functions  $f_{2}^{\delta}$  and $f_{2}^{\de}-\frac{\theta_{c}}{2}c^{2}$,    and their  derivatives ($\delta=0.1$, $\theta_0=1$, $\theta_c=1.5$).
\begin{figure}[ht]
    \centering
    \begin{subfigure}[b]{0.32\textwidth}
        \centering
        \includegraphics[width=\textwidth]{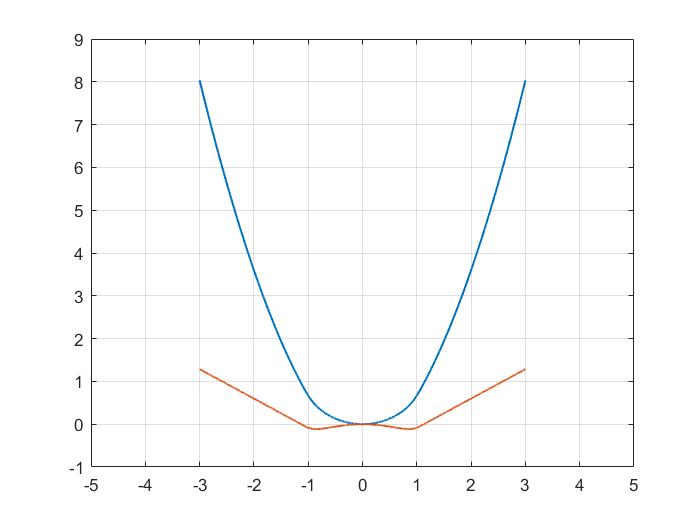}
        \caption{}
    \end{subfigure}
    \hfill
    \begin{subfigure}[b]{0.32\textwidth}
        \centering
        \includegraphics[width=\textwidth]{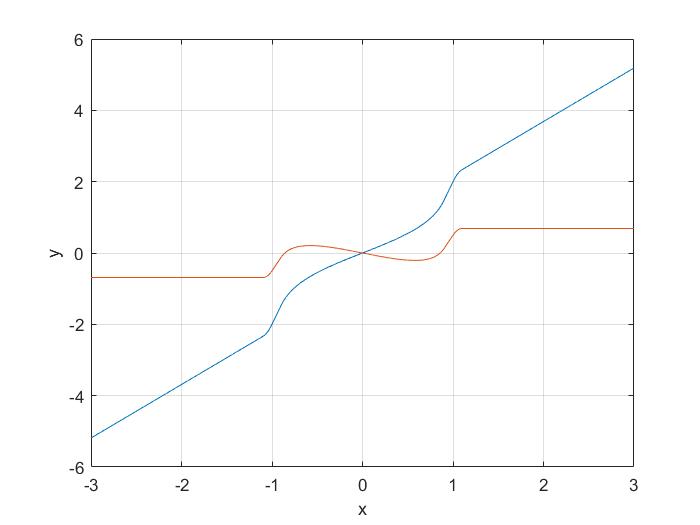}
        \caption{}
    \end{subfigure}
    \hfill
    \begin{subfigure}[b]{0.32\textwidth}
        \centering
        \includegraphics[width=\textwidth]{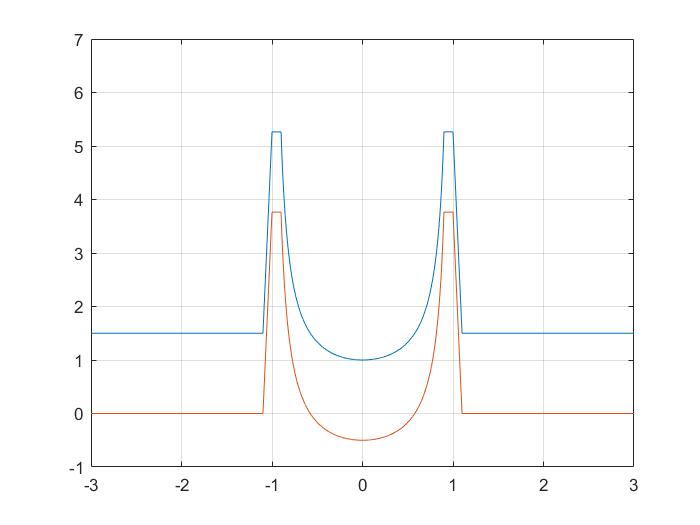}
        \caption{}
    \end{subfigure}
    \caption{The plot of $f_{2}^{\delta}$ (blue line) and $f_{2}^{\delta}-\frac{\theta_c}{2} c^2$ (red line) in (A), and their derivatives in (B) and (C)}
\end{figure}
\end{remark}

With above preparation, we  approximate the functions  in \eqref{eq1.4}-\eqref{eq1.4x}   by  
\be\label{eq1.4a}\ba
f^{\de}(\rho,c)&:=\rho^{\gamma-1}+H(c)\ln\rho +f_{2}^{\de}(c)-\frac{\theta_c}{2}c^2,\quad c\in (-\infty,\infty).
\ea\ee 
It is clear that, for fixed $\de>0$,  the function $f^{\de}(\rho,c)$ is regular with respect to $c$. 

Let  $m_1$ and $m_2$ be as in   \eqref{eq1.8}  and $|\Omega|$ be the Lebesgue measure of $\Omega$,  we define  \begin{eqnarray}\label{eq2.6}
\epsilon\in(0,1),\quad\delta\in(0,1),\quad\rho_0=\frac{m_1}{|\Omega|},\quad \rho _{0} c_0=\frac{m_2}{|\Omega|},
\end{eqnarray} 
and  consider the   approximation system:
\begin{eqnarray}\label{eq2.7}
\left\{\begin{aligned}
&\epsilon^2\rho+{\rm div}(\rho\u)=\epsilon^4\Delta\rho+\epsilon^2\rho_0,\\
&\epsilon^2\rho\u+{\rm div}(\rho\u\otimes\u)+\na\left(\left(\ln\frac{1}{\delta}\right)^{-1}\rho^{11}+\rho^2\frac{\partial f^\delta(\rho,c)}{\partial \rho}\right)+\epsilon^4\na\rho\cdot\na\u\\
&\quad={\rm div}\mathbb{S}_{ns}+\rho\mu\na c-\rho\frac{\partial f^\delta(\rho,c)}{\partial c}\na c+\rho \mathbf{g}_1+\mathbf{g}_2,\\
&\epsilon\rho c+\rho\u\cdot\na c=\Delta\mu+\epsilon\rho_0c_0,\\
&\rho\mu=\rho\frac{\partial f^\delta(\rho,c)}{\partial c}-\Delta c,\\
&\int\rho c=m_2+\epsilon\int(\rho_0-\rho)c-\epsilon^3\int\na\rho\cdot\na c,\\
& \int\rho\mu=\int\rho\frac{\partial f^\delta(\rho,c)}{\partial c}.
\end{aligned}\right.
\end{eqnarray}
We close 
equations \eqref{eq2.7}   with  boundary conditions
\begin{eqnarray}\label{eq2.8}
\u=0,\quad\frac{\partial\rho}{\partial \n}=0,\quad \frac{\partial c}{\partial \n}=0,\quad \frac{\partial\mu}{\partial \n}=0\quad {\rm on}\quad \partial\Omega.
\end{eqnarray}
 \begin{remark}
Due to the  Neumann boundary conditions in \eqref{eq2.8},  we impose  integral identity  $\eqref{eq2.7}_{5}$ to guarantee that $c$ is uniquely determined, and therefrom, impose   $\eqref{eq2.7}_6$   for the uniqueness of $\mu$.   Additionally,   in momentum equations $\eqref{eq2.7}_{2}$   we used the   formal  computation 
\[\rho\mu\na c-\rho\frac{\partial f^\delta(\rho,c)}{\partial c}\na c=-\Delta c\na c={\rm div}\mathbb{S}_{c}.\]  
\end{remark}

\bigskip

\section{Approximate System \eqref{eq2.7}-\eqref{eq2.8}}

In this section, we consider the approximate system \eqref{eq2.7}-\eqref{eq2.8} with parameters $\epsilon$ and $\delta$, prove the existence of strong solutions, establish uniform-in-$\epsilon$ estimates, and then take the limit $\epsilon \to 0$.
 
\subsection{Existence of solutions} 
We firsr prove the following existence result:
\begin{theorem}\label{thm2.1}
Let  conditions \eqref{eq1.12}-\eqref{eq1.11} and \eqref{eq2.6}  hold true.
Then,  for  $p\in(1,+\infty)$ and small $\eps>0$, the    problem \eqref{eq2.7}-\eqref{eq2.8} admits a solution   $(\rho^\epsilon,\u^\epsilon,\mu^\epsilon,c^\epsilon)$  which satisfies 
\begin{eqnarray}\label{eq2.13}
0\leq\rho^\epsilon\in W^{2,p}(\Omega),\quad \Vert\rho^\epsilon\Vert_{L^1(\Omega)}=m_1,
\end{eqnarray}
\begin{eqnarray}\label{eq2.14}
\u^\epsilon\in W_0^{1,p}(\Omega;\mathbb{R}^3)\cap W^{2,p}(\Omega;\mathbb{R}^3),\quad (\mu^\epsilon,c^\epsilon)\in W^{2,p}(\Omega)\times W^{2,p}(\Omega).
\end{eqnarray}
\end{theorem}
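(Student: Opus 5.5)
We will prove Theorem \ref{thm2.1} with a Leray--Schauder fixed point argument, following the scheme of \cite[Theorem 4.1]{LW}; the new ingredient is the regularized potential $f_2^\delta$. Fix $\epsilon,\delta$ and take $p>3$; the range $p\in(1,3]$ then follows because $\Omega$ is bounded. The map is built on pairs $(\u,c)$ in a Banach space $X=W_0^{1,p}(\Omega;\mathbb{R}^3)\times W^{1,p}(\Omega)$. Given $(\nu,\ti c)\in X$, we solve the four equations in sequence.

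\textbf{The decoupled problems.}
\begin{itemize}
\item We first solve the linear elliptic Neumann problem $\epsilon^2\rho-\epsilon^4\Delta\rho+\div(\rho\nu)=\epsilon^2\rho_0$. Lax--Milgram for small $\epsilon$, together with $L^p$ theory, gives $\rho\in W^{2,p}$. Integrating the equation gives $\int\rho=m_1$. The maximum principle, tested with $\rho^-$, gives $\rho\ge0$.
\item Next we solve the coupled Cahn--Hilliard pair $\eqref{eq2.7}_{3,4}$ with $\nu,\rho$ frozen, including the integral constraints $\eqref{eq2.7}_{5,6}$. The relation $\eqref{eq2.7}_4$ gives $\mu$ up to the constraint $\eqref{eq2.7}_6$. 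Equation $\eqref{eq2.7}_3$ is then a linear Neumann problem for $\mu$, with compatibility supplied by $\eqref{eq2.7}_5$. Because $f^\delta$ is nonlinear in $c$, we handle this step by an inner fixed point, or equivalently by treating $\ti c$ in the nonlinearity $\rho\,\partial_c f^\delta(\rho,\ti c)$.
\item Finally we solve the momentum equation $\eqref{eq2.7}_2$, linearized: convective and pressure terms are evaluated at $(\rho,\nu,c)$, and we invert the Lamé operator with the term $\epsilon^2\rho\u$ kept on the left. This yields $\u\in W^{2,p}\cap W^{1,p}_0$.
\end{itemize}
Let $T(\nu,\ti c)=(\u,c)$ denote the resulting map. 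Elliptic regularity sends $T$ into $W^{2,p}$, so $T$ is compact on $X$. Continuity of $T$ follows from uniqueness of each linear problem and stability estimates.

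\textbf{Uniform bound for Leray--Schauder.} We need a bound on all solutions of $(\u,c)=sT(\u,c)$, $s\in[0,1]$. We test $\eqref{eq2.7}_2$ with $\u$, $\eqref{eq2.7}_3$ with $\mu$, and $\eqref{eq2.7}_4$ with $\epsilon c$ and with $\u\cdot\nabla c$ terms. Using the continuity equation, we recombine the pressure terms $\rho^2\partial_\rho f^\delta$ and $(\ln\frac1\delta)^{-1}\rho^{11}$ into the potential-energy terms. The $\epsilon$-regularizations produce good-signed terms such as $\epsilon^4\int\rho^{\gamma-2}|\nabla\rho|^2$ and $\epsilon\int\rho c\,\partial_c f^\delta$, because the extra term $\epsilon^4\nabla\rho\cdot\nabla\u$ was inserted exactly to cancel the bad contribution. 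The outcome is an energy identity that controls
\[\|\nabla\u\|_{L^2}^2+\|\nabla\mu\|_{L^2}^2+\epsilon\|\nabla c\|_{L^2}^2\]
by the forcing terms, plus contributions from $\rho(f_2^{\delta'}-\theta_c c)c$. Remark \ref{r2.2}, specifically \eqref{dan}, makes this last quantity bounded below by $-M\rho$. Since $f_2^\delta-\frac{\theta_c}{2}c^2$ grows at most linearly, there is no quadratic anti-diffusion term left to absorb. Control of $\mu$ in $L^2$ then comes from its mean, fixed by $\eqref{eq2.7}_6$, together with the gradient bound.

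\textbf{Bootstrap and main obstacle.} With $\epsilon,\delta$ fixed, the bounds can depend on them, so we bootstrap through elliptic regularity: $\rho\in W^{2,p}$, then $c,\mu\in W^{2,p}$, then $\u\in W^{2,p}$. The step where we expect difficulty is obtaining an $s$-independent bound on $c$ itself, not only on $\nabla c$. The Neumann problem pins $c$ only through the weighted constraint $\int\rho c=\dots$, and $\rho$ may be small on large sets. Our first approach is to use $\epsilon$-positivity: $\rho\ge\epsilon$-dependent positive lower bound, obtained from $\eqref{eq2.7}_1$ by the strong maximum principle since $\rho_0>0$. This makes the constraint an equivalent norm through a weighted Poincaré inequality. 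Failing that, we would test $\eqref{eq2.7}_4$ with $c$ and use the linear growth of $f_2^{\delta'}-\theta_c c$ outside $[-1-\delta,1+\delta]$.

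With this bound on the fixed points of $sT$, Leray--Schauder yields a fixed point, which gives \eqref{eq2.13}--\eqref{eq2.14}.
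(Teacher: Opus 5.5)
Your overall strategy (a Leray--Schauder/Schaefer argument for a linearized map, an energy bound along the homotopy, then an elliptic bootstrap) is the paper's. However, the a priori bound, which is the heart of the proof, is never closed, and the obstacle you single out is not the real one.

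After the testing you describe, the right-hand side still contains terms that grow with the density:
\begin{itemize}
\item $\int\rho\mathbf{g}_1\cdot\u\le C\|\rho\|_{L^{6/5}}\|\na\u\|_{L^2}$;
\item the price of passing from $\int\rho\mu$ and $\int\rho c$ to the means of $\mu$ and $c$, namely $\|\rho\|_{L^{6/5}}\|\na\mu\|_{L^2}$ and $\|\rho\|_{L^{6/5}}\|\na c\|_{L^2}$;
\item the remainders $\epsilon^2\int(\rho-\rho_0)(f_2^\delta-\frac{\theta_c}{2}c^2)$ and $\epsilon^4\int\na\rho\cdot\na c\,(f_2^{\delta'}-\theta_c c)$, which arise when $\int\rho\,\u\cdot\na c\,\partial_c f^\delta$ is rewritten through $\eqref{eq2.7}_1$. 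This is where the linear growth of $f_2^\delta-\frac{\theta_c}{2}c^2$ enters.
\end{itemize}
Your energy step does generate the left-hand term $\epsilon^2(\ln\frac1\delta)^{-1}\int\rho^{11}$, from the artificial pressure and the damping $\epsilon^2\rho$ in $\eqref{eq2.7}_1$. But you never use it, and that term is exactly what closes the estimate in the paper. All right-hand terms are bounded by $C(1+\|\rho\|_{L^{12/5}}^6)$, see \eqref{eq2.224}. Since $\|\rho\|_{L^1}=m_1$, interpolation gives $\|\rho\|_{L^{12/5}}^6\le C\|\rho\|_{L^{11}}^{77/20}$, which is absorbed because $77/20<11$. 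This yields \eqref{ls13}. Without it the bootstrap cannot start, since the $W^{2,p}$ bound on $\rho$ needs a bound on $\u$ and vice versa.

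Conversely, bounding $c$ itself is routine and works exactly as for $\mu$. The identity $m_1(c)_\Omega=\int\rho c-\int\rho(c-(c)_\Omega)$ gives $m_1|(c)_\Omega|\le|\int\rho c|+C\|\rho\|_{L^{6/5}}\|\na c\|_{L^2}$, with $\int\rho c$ given by $\eqref{eq2.7}_5$ up to lower-order $\epsilon$-terms. So no lower bound on $\rho$ is needed. The lower bound you propose via the strong maximum principle would in any case depend on the velocity, i.e. on the quantity being estimated. It therefore cannot be uniform over the fixed points of $sT$.

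The construction of $T$ also needs repair. With $\nu$ and $\rho$ frozen and $\ti c$ entering only through $\partial_c f^\delta$, your Cahn--Hilliard step is a coupled linear system for $(\mu,c)$. For its homogeneous version, testing with $\mu$ and with $\epsilon c$ gives $\int|\na\mu|^2+\epsilon\int|\na c|^2=-\int\rho\mu\,\nu\cdot\na c$. This has no sign, because the term cancels only against the momentum equation, so unique solvability is not evident. The paper avoids this by including $\mu$ among the fixed-point variables and freezing every coupling term, so that each component solves a scalar Lam\'e or Neumann problem. The solvability condition of the $\mu$-problem is then $\eqref{eq2.7}_5$ for the frozen concentration, as you correctly observed, and that of the $c$-problem is $\eqref{eq2.7}_6$.

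Moreover, your energy computation is done for the unscaled system. The homotopy parameter must be placed uniformly on all right-hand sides so that the cancellations of $\rho\mu\,\u\cdot\na c$ and of the convective terms persist for $s<1$. The paper does this with $\sigma F^i$, testing the continuity equation with $\frac{\sigma}{2}|\u|^2$.

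Two smaller points:
\begin{itemize}
\item Lax--Milgram does not apply to the $\rho$-problem, because coercivity requires $\epsilon$ large compared with $\nu$, not small. Use Fredholm theory with the maximum principle for the adjoint, or \cite[Proposition A.2]{NS}.
\item Testing with $\rho^-$ leaves the unsigned term $\frac12\int\div\nu\,(\rho^-)^2$. Nonnegativity instead requires a sign-type truncation $\psi_\eta(\rho)\to-\mathbf{1}_{\{\rho<0\}}$.
\end{itemize}
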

\begin{proof}  The proof of  Theorem \ref{thm2.1} is based on  the  Fixed Point Theorem.
From \cite[Proposition A.2.]{NS}, for     $\nu\in W_0^{1,\infty}(\Omega;\mathbb{R}^3)$,  the  equation $\eqref{eq2.7}_1$  with boundary condition $\frac{\p \rho}{\p \n}=0$  has a unique solution   $\rho=\rho(\nu)$ such that \begin{equation}\label{eq2.11}
\rho\geq 0,\quad a.e.\,\, {\rm in}\,\,\Omega,\quad \Vert\rho\Vert_{L^1}=m_1,\quad \Vert\rho\Vert_{W^{2,p}}\leq C(\epsilon,p,\Vert\nu\Vert_{W_{0}^{1,\infty}}).
\end{equation}
For $p\in (1,\infty)$, we set 
\begin{eqnarray}\label{eq2.15}
(\nu,\tilde{\mu},\tilde{c})\in\mathcal{W}:= W_0^{1,\infty}(\Omega;\mathbb{R}^3)\times W_{n}^{1,p}(\Omega)\times W_{n}^{1,p}(\Omega).\end{eqnarray}
Consider the elliptic system for $(\u,\mu,c)$:
\begin{eqnarray}\label{eq2.16}
\left\{\begin{aligned}
&{\rm div}\mathbb{S}_{ns}=F^1(\nu,\tilde{\mu},\tilde{c})\\
&~~:=\epsilon^2\rho\nu+{\rm div}(\rho\nu\otimes\nu)+\na\left(\left(\ln\frac{1}{\delta}\right)^{-1}\rho^{11}+\rho^2\frac{\partial f^\delta(\rho,\tilde{c})}{\partial\rho}\right)+\epsilon^4\na\rho\cdot\na\nu\\
&\quad\,\,\,\,+\rho\frac{\partial f^\delta(\rho,\tilde{c})}{\partial\tilde{c}}\na\tilde{c}-\rho\tilde{\mu}\na\tilde{c}-\rho \mathbf{g}_1-\mathbf{g}_2,\\
&\lap\mu= F^2(\nu,\tilde{\mu},\tilde{c}):=\epsilon\rho\tilde{c}+\rho\nu\cdot\na \tilde{c}-\epsilon\rho_0c_0,\\
&\lap c= F^3(\nu,\tilde{\mu},\tilde{c}):=\rho\frac{\partial f^\delta(\rho,\tilde{c})}{\partial \tilde{c}}-\rho\tilde{\mu},\\
&\int\rho\tilde{c}=m_2+\epsilon\int(\rho_0-\rho)\tilde{c}-\epsilon^3\int\na\rho\cdot\na \tilde{c},\quad \int\rho\tilde{\mu}=\int\rho\frac{\partial f^\delta(\rho,\tilde{c})}{\partial \tilde{c}},\\
&\u=0,\quad\frac{\partial\mu}{\partial \n}=0,\quad \frac{\partial c}{\partial \n}=0\,\,{\rm on}\,\, \partial\Omega.
\end{aligned}\right.
\end{eqnarray} 
According to \cite{GT}, system \eqref{eq2.16} admits a unique solution:
\begin{eqnarray}\label{eq2.17}
(\u,\mu,c):=\mathbb{A}[(\nu,\tilde{\mu},\tilde{c})].
\end{eqnarray}
We see that the operator $\mathbb{A}$ is compact and continuous in $\mathcal{W}$.  Moreover, we have 
\begin{claim}\la{ccc} Let  $\mathbb{A}:\mathcal{W}\to \mathcal{W}$ be as  in  \eqref{eq2.17} and  $\mathcal{W}$ be as  in \eqref{eq2.15}. Then the set of possible fixed points
\begin{eqnarray}\label{eq2.18}
\left\{(\u,\mu,c)\in \mathcal{W}\left|\begin{aligned}
&(\u,\mu,c):=\sigma \mathbb{A}[(\u,\mu,c)],\\
& for\,\, some\,\, \sigma\in(0,1]\,\, and\,\,\rho=\rho(\u)
\end{aligned}\right.\right\}
\end{eqnarray}
is bounded.
\end{claim}
If we temporarily accept Claim \ref{ccc}, then by Schaefer's Fixed Point Theorem ({\it cf.}\cite[Chapter 9, Theorem 3]{Evans}), we conclude 
$(\u,\mu,c):=\mathbb{A}[(\u,\mu,c)]$  with $\rho=\rho(\u).$
This establishes the existence of a solution $(\rho^\epsilon,\u^\epsilon,\mu^\epsilon,c^\epsilon)$ to \eqref{eq2.7} and \eqref{eq2.8}.

{\it Proof of Claim \ref{ccc}.} It suffices to show  there is a constant $M<\infty$ which is uniform in $\sigma$,  such that
\begin{eqnarray}\label{eq2.19}
\Vert(\u,\mu,c)\Vert_{\mathcal{W}}<M,
\end{eqnarray}
where    $(\rho,\u,\mu,c)$ solves
\begin{eqnarray}\label{eq2.20}
\left\{\begin{aligned}
&\epsilon^2\rho+{\rm div}(\rho\u)=\epsilon^4\Delta\rho+\epsilon^2\rho_0,\\
&{\rm div}\mathbb{S}_{ns}=\sigma F^1(\u,\mu,c),\\
&\Delta\mu=\sigma F^2(\u,\mu,c),\\
&\Delta c=\sigma F^3(\u,\mu,c),\\
&\int\rho c=m_2+\epsilon\int(\rho_0-\rho)c-\epsilon^3\int\na\rho\cdot\na c,\quad \int\rho\mu=\int\rho\frac{\partial f^\delta(\rho,c)}{\partial c},\\
&\u=0,\quad\frac{\partial\rho}{\partial \n}=0,\quad\frac{\partial \mu}{\partial \n}=0,\quad \frac{\partial c}{\partial \n}=0,\quad {\rm on}\,\,\partial\Omega.
\end{aligned}\right.
\end{eqnarray}
We  adopt  the argument in \cite{LW} and prove   \eqref{eq2.19} in the following  several steps.  

{\it Step 1.} It follows  directly  from $\eqref{eq2.20}_1$ that $\Vert\rho\Vert_{L^1}=m_1$. 

Multiplying $\eqref{eq2.20}_1$ by $\frac{\sigma}{2}|\u|^2$ and $\eqref{eq2.20}_2$ by $\u$ respectively,
 and use the Holder inequality,  we  arrive at 
\begin{eqnarray}\label{eq2.22}
\begin{aligned}
&\frac{\epsilon^2\sigma}{2}\int(\rho+\rho_0)|\u|^2+\sigma\epsilon^2\int\left(\frac{\left(\ln\frac{1}{\delta}\right)^{-1}\rho^{11}}{10}+ \rho^\gamma\right)\\
&\quad+\sigma\epsilon^4\int\left(\frac{4\left(\ln\frac{1}{\delta}\right)^{-1}}{11}|\na\rho^{\frac{11}{2}}|^2+\frac{4(\gamma-1)}{\gamma}|\na\rho^{\frac{\gamma}{2}}|^2\right)\\
&\quad+\int\mathbb{S}_{ns}:\na\u+\sigma\int\rho\frac{\partial f^\delta}{\partial c}(\u\cdot\na)c-\sigma\int\rho\mu(\u\cdot\na)c\\
&\leq \sigma\epsilon^2\int\left(\frac{\left(\ln\frac{1}{\delta}\right)^{-1}\rho_0^{11}}{10}+ \rho_0^\gamma\right)+\sigma\int(\rho \mathbf{g}_1+\mathbf{g}_2)\cdot\u+\sigma\int\rho H{\rm div}\u.
\end{aligned}
\end{eqnarray}
Multiplying $\eqref{eq2.20}_3$ by $\mu$ and $\eqref{eq2.20}_4$ by $\epsilon c$ yields
\begin{eqnarray}\label{eq2.23}
\int|\na\mu|^2+\epsilon\int|\na c|^2+\sigma \int\rho\mu(\u\cdot\na)c-\epsilon\sigma\int\rho_0c_0\mu+\epsilon\sigma\int\rho c\frac{\partial f^\delta}{\partial c}=0.
\end{eqnarray}
The combination of  \eqref{eq2.22} with \eqref{eq2.23} gives 
\begin{eqnarray}\label{eq2.242}
\begin{aligned}
&\frac{\epsilon^2\sigma}{2}\int(\rho+\rho_0)|\u|^2+\sigma\epsilon^2\int\left(\frac{\left(\ln\frac{1}{\delta}\right)^{-1}\rho^{11}}{10}+ \rho^\gamma\right)\\
& \quad +\sigma\epsilon^4\int\left(\frac{4\left(\ln\frac{1}{\delta}\right)^{-1}}{11}|\na\rho^{\frac{11}{2}}|^2+\frac{4(\gamma-1)}{\gamma}|\na\rho^{\frac{\gamma}{2}}|^2\right)\\
& \quad +\int|\na\mu|^2+\epsilon\int|\na c|^2+\int\mathbb{S}_{ns}:\na\u+\sigma\int\left(\eps \rho c  +  \rho\u\cdot\na c \right)\frac{\partial f^\delta}{\partial c}\\
&\leq\frac{\sigma\epsilon^2}{3}\int\left(\frac{\left(\ln\frac{1}{\delta}\right)^{-1}\rho_0^{11}}{10}+\rho_0^\gamma\right)+\sigma\int(\rho \mathbf{g}_1+\mathbf{g}_2)\cdot\u+\sigma\int\rho H{\rm div}\u+\epsilon\sigma\int\rho_0c_0\mu.
\end{aligned}
\end{eqnarray}
We now deal with the integrals in \eqref{eq2.242} individually.

By \eqref{eq1.4a}, it has \be\la{ls0}\frac{\partial f^\delta}{\partial c} = f_{2}^{\delta '}(c)-\theta_cc.\ee
 Hence, 
\begin{equation}\label{eq2.25}
\begin{aligned}
&\sigma\int\left(\eps \rho c  +  \rho\u\cdot\na c \right)\frac{\partial f^\delta}{\partial c}\\
&= \sigma\eps\int  \rho c \left(f_{2}^{\delta '}(c)-\theta_cc\right)+\sigma \int \rho\u\cdot\na c \left(f_{2}^{\delta '}(c)-\theta_cc\right).
\end{aligned}
\end{equation}
Using   $\eqref{eq2.20}_1$ once more,  
\begin{equation}\la{ls0}
\begin{aligned}
0&=-\sigma H\int\left({\rm div}(\rho\u) \ln \rho+ \u\cdot\na\rho\right)\\
&= \epsilon^2\sigma H\int(\rho-\rho_0) \ln\rho +4\epsilon^4 \sigma H\int |\na\sqrt{\rho}|^2   -\sigma H\int  \u\cdot\na\rho.
\end{aligned}
\end{equation}
By the fact $\Vert\rho\Vert_{L^1}=m_1$  and 
\be\la{ls1}|f_{2}^{\delta}(c)-\frac{\theta_c}{2} c^{2}|\le C(\de)(1+|c|),\quad |f_{2}^{\delta '}(c)-\theta_cc|\le C(\de),\ee  
we deduce  that  \begin{eqnarray*}
&&\left|\int\rho\u\cdot\na c\left(f_{2}^{\delta '}(c)-\theta_cc\right)\right|=\left|\int{\rm div}(\rho\u)\left(f_{2}^{\delta}(c)-\frac{\theta_c}{2}c^2\right)\right|\\
&&=\left|\epsilon^2\int(\rho-\rho_0) \left(f_{2}^{\delta}(c)-\frac{\theta_c}{2}c^2\right)+\epsilon^4\int\na\rho\cdot\na c \left(f_{2}^{\delta '}(c)-\theta_cc\right)\right|\\
&&\leq C(\delta)\epsilon^2\left(\int(1+\rho)(1+|c|)+\eps^{2} \int|\na\rho\cdot\na c|\right).
\end{eqnarray*}
The final integral works similarly,
\begin{eqnarray*}
&& \left|\sigma\eps\int  \rho c \left(f_{2}^{\delta '}(c)-\theta_cc\right)
\right|\leq C(\delta) \epsilon  \int\rho|c|.\end{eqnarray*}
 With above estimates in hand,  we substitute   \eqref{eq2.25} and \eqref{ls0}  into  \eqref{eq2.242} to obatin
  \begin{eqnarray}\label{eq2.242a}
\begin{aligned}
&\frac{\epsilon^2\sigma}{2}\int(\rho+\rho_0)|\u|^2+\sigma\epsilon^2\int\left(\frac{\left(\ln\frac{1}{\delta}\right)^{-1}\rho^{11}}{10}+ \rho^\gamma\right)\\
& 
\quad+\sigma\epsilon^4\int\left(4 H |\na\sqrt{\rho}|^2 +\frac{4\left(\ln\frac{1}{\delta}\right)^{-1}}{11}|\na\rho^{\frac{11}{2}}|^2+\frac{4(\gamma-1)}{\gamma}|\na\rho^{\frac{\gamma}{2}}|^2\right)\\
& \quad+\int|\na\mu|^2+\epsilon\int|\na c|^2+\lambda_{1}\int |\na\u|^{2}\\
&\leq  \epsilon^2\int\left(\frac{\left(\ln\frac{1}{\delta}\right)^{-1}\rho_0^{11}}{10}+ \rho_0^\gamma\right)+\int\left(\epsilon\rho_0c_0\mu +(\rho \mathbf{g}_1+\mathbf{g}_2)\cdot\u \right)+ \epsilon^2\sigma\int(\rho_0-\rho)H\ln\rho\\
&\quad+  \int H{\rm div}(\rho \u)+C\sigma\eps^{4} \int|\na\rho\cdot\na c|+C\sigma\epsilon\int(1+\rho)(1+|c|).
\end{aligned}
\end{eqnarray}

{\it Step 2.} 
Let us control the integrals  on the right-hand side of  \eqref{eq2.242a}. 

By \eqref{eq1.11} and \eqref{eq2.6}, we have 
\be\la{ls} \ba & \epsilon^2\int\left(\frac{\left(\ln\frac{1}{\delta}\right)^{-1}\rho_0^{11}}{10}+ \rho_0^\gamma\right)+\int\left(\epsilon\rho_0c_0\mu +(\rho \mathbf{g}_1+\mathbf{g}_2)\cdot\u \right)\\
&\le C\left(1+\|\mu\|_{L^{1}}+\|\rho\|_{L^{\frac{6}{5}}}^{2}\right)+\frac{\lambda_{1}}{2}\|\na \u\|_{L^{2}}^{2}.\ea\ee
The    monotonicity of  $\ln x$,   $\eqref{eq2.20}_1$ and    \eqref{eq1.12} guarantee that   
\be\la{ls4}
\begin{aligned}
&\epsilon^2\sigma \int H(\rho_0-\rho) \ln\rho  \\
&= \epsilon^2\sigma \int H(\rho_0-\rho)(\ln\rho - \ln\rho_0) - \epsilon^2\sigma \int H\rho \ln\rho_0   + \epsilon^2\sigma \int \rho_0 \ln\rho_0 H \\
& \leq C,
\end{aligned}
\ee
and 
\bnn\ba
\left|\int H{\rm div}(\rho\u)\right| =\sigma\left|-\epsilon^2\int H\rho +\epsilon^2\int\rho_0 H-\epsilon^4\int\Delta\rho H\right|\leq C. 
\ea\enn   
Hence, 
\be\la{ls5}
\begin{aligned}
&\left|\int H{\rm div}(\rho\u)+\sigma\eps^{4} \int|\na\rho\cdot\na c|\right|\\
&\leq C+C\sigma\epsilon^{6}\Vert\na\rho\Vert_{L^2}^2+\frac{\sigma\epsilon^2}{8}\Vert\na c\Vert_{L^2}^2\\
&\le  C+ \frac{\sigma\epsilon^4}{2}\int\left(2H|\na\sqrt{\rho}|^2 +\frac{2\left(\ln\frac{1}{\delta}\right)}{11}|\na\rho^{\frac{11}{2}}|^2 \right)+
 \frac{\sigma\epsilon^2}{8}\int|\na c|^2,
\end{aligned}\ee  where the last inequality sign is valid for small  $\eps=\eps(H,\,\de)>0$.

Next, thanks to   \eqref{ls0},  \eqref{ls1} and the  condition  $
\int\rho\mu=\int\rho\frac{\partial f^\delta}{\partial c}, 
$
\bnn\ba
\left|\int\mu\right|&=\left|\frac{1}{\rho_0}\int\rho\mu-\frac{1}{\rho_0}\int\rho(\mu-(\mu)_{\om})\right|\\
&\leq\frac{1}{\rho_0}\left|\int\rho\frac{\partial f^\delta}{\partial c}\right|+C\Vert\rho\Vert_{L^{\frac{6}{5}}}\Vert\na\mu\Vert_{L^2}\\
&\leq C\int\rho|f_{2}^{\delta '}(c)-\theta_cc|+C\Vert\rho\Vert_{L^{\frac{6}{5}}}\Vert\na\mu\Vert_{L^2}\\
&\leq C(\delta)\left(1+\Vert\rho\Vert_{L^{\frac{6}{5}}}\Vert\na\mu\Vert_{L^2}\right),
\ea\enn
and whence, for any $p\in[1,6]$,
\be\la{ls2}
\Vert\mu\Vert_{L^p}\leq C+C\left(1+\Vert\rho\Vert_{L^{\frac{6}{5}}}\right)\Vert\na\mu\Vert_{L^2}.
\ee
A similar method  gives
\bnn\ba
\left|\int c\right|&=\left|\frac{1}{\rho_0}\int\rho c-\frac{1}{\rho_0}\int\rho\left(c-(c)_\Omega\right)\right|\\
&\leq\left|\frac{m_2}{\rho_{0}}+\epsilon\int(\rho_0-\rho)c+\epsilon^3\int\rho\Delta c\right|+C\Vert\rho\Vert_{L^{\frac{6}{5}}}\Vert\na c\Vert_{L^2}\\
&\leq C+C\Vert\rho\Vert_{L^{\frac{6}{5}}}\Vert\na c\Vert_{L^2}+C\epsilon^3(\Vert\rho\Vert_{L^2}^2+\Vert\rho\Vert_{L^{\frac{12}{5}}}^2\Vert\mu\Vert_{L^6}).
\ea\enn
Thus, for  $p\in[1,6]$,
\be\la{ls6}
\Vert c\Vert_{L^p}\leq C+C(1+\Vert\rho\Vert_{L^{\frac{6}{5}}})\Vert\na c\Vert_{L^2}+C\epsilon^3(\Vert\rho\Vert_{L^2}^2+\Vert\rho\Vert_{L^{\frac{12}{5}}}^2\Vert\mu\Vert_{L^6}).
\ee
In terms of \eqref{ls2} and \eqref{ls6},  we compute 
\be\la{ls7}\ba 
C\epsilon\int(1+\rho)(1+|c|)
&\le C\epsilon\left(1+(1+\|\rho\|_{L^{\frac{6}{5}}})\|c\|_{L^{6}}\right)\\
&\le C\epsilon\left(1+ \|\rho\|_{L^{\frac{12}{5}}}^{6}\right)+ \frac{\epsilon}{8}\int|\na c|^2 +\frac{1}{2}\int|\na \mu|^2.\ea\ee
Therefore,  having   \eqref{ls},  \eqref{ls4},  \eqref{ls5},  and \eqref{ls7}  in hand,  we deduce from  \eqref{eq2.242a}  that  
  \begin{eqnarray}\label{eq2.224}
\begin{aligned}
&\frac{\epsilon^2\sigma}{2}\int(\rho+\rho_0)|\u|^2+\sigma\epsilon^2\int\left(\frac{\left(\ln\frac{1}{\delta}\right)^{-1}\rho^{11}}{10}+ \rho^\gamma\right)\\
& \quad
+\sigma\epsilon^4\int\left( H|\na\sqrt{\rho}|^2 +\frac{\left(\ln\frac{1}{\delta}\right)^{-1}}{11}|\na\rho^{\frac{11}{2}}|^2+\frac{4(\gamma-1)}{\gamma}|\na\rho^{\frac{\gamma}{2}}|^2\right)\\
&\quad +\frac{1}{2}\int|\na\mu|^2+\frac{\epsilon}{4}\int|\na c|^2+\frac{\lambda_{1}}{2}\int |\na\u|^{2}\\
&\leq C\left(1+  \|\rho\|_{L^{\frac{12}{5}}}^{6}\right).
\end{aligned}
\end{eqnarray}
As a consequence  of   \eqref{ls2},  \eqref{ls6},  \eqref{eq2.224}, \begin{equation}\label{eq2.246}
\Vert\mu\Vert_{L^p}+\Vert c\Vert_{L^p}\leq C+C \|\rho\|_{L^{\frac{12}{5}}}^{6}.\end{equation}
Here,  the constant  $C$ appeared in \eqref{eq2.224} and \eqref{eq2.246} is independent of $\eps$ and $\sigma.$

{\it Step 3.} By the interpolation theorem,   from \eqref{eq2.224} and \eqref{eq2.246}  we obtain 
\be\la{ls13}
\Vert\rho\Vert_{L^{11}}^{11}+\Vert\u\Vert_{H_0^1}^2+\Vert\mu\Vert_{H^1}^2+\Vert c\Vert_{H^1}^2+\Vert|\na\rho^2|+|\na\sqrt{\rho}|\Vert_{L^2}^2\leq C(\epsilon).
\ee
With  \eqref{ls13}, the  bootstrap argument ({\it cf}.\cite{LW}) shows
\be\la{ls9}\Vert\u\Vert_{W^{2,p}}+\Vert\mu\Vert_{W^{2,p}}+\Vert c\Vert_{W^{2,p}}\leq C(\epsilon),\quad p\in (1,\infty).\ee
Therefore, the desired \eqref{eq2.19} follows from \eqref{ls9}, and the proof of Theorem \ref{thm2.1} is thus complete. \end{proof}

\subsection{$\epsilon$-limit  for  the solutions}
We shall   take  $\epsilon\rightarrow0$ for the  solution $(\rho^{\eps},\u^{\eps},\mu^{\eps},c^{\eps})$  of  \eqref{eq2.7}-\eqref{eq2.8} and  obtain  the following result:
 \begin{theorem}\label{thm3.1} Let  assumptions  \eqref{eq1.11}-\eqref{eq1.12} hold true.  Then, for fixed $\de\in (0,1)$, the system  
\begin{equation}\label{eq3.1}
\left\{\begin{aligned}
&{\rm div}(\rho\u)=0,\\
&{\rm div}(\rho\u\otimes\u)+\na\left(\left(\ln\frac{1}{\delta}\right)^{-1}\rho^{11}+\rho^2\frac{\partial f^\delta}{\partial\rho}\right)={\rm div}(\mathbb{S}_{ns}+\mathbb{S}_c)+\rho\mathbf{g}_1+\mathbf{g}_2,\\
&{\rm div}(\rho\u c)=\Delta\mu,\\
&\rho\mu=\rho\frac{\partial f^\delta}{\partial c}-\Delta c,\\
&\int\rho=m_1,\quad\int\rho c=m_2,
\end{aligned}\right.\end{equation}
with the boundary conditions \eqref{eq1.8}  admits a weak solution  $(\rho^{\de},\u^{\de},\mu^{\de},c^{\de})$. Moreover, 
\begin{eqnarray}\label{eq3.1.2}
0\leq \rho^{\de}\in L^{12},\quad \u^{\de}\in H_0^1(\Omega;\mathbb{R}^3),\quad (\mu^{\de},c^{\de})\in H_{n}^1(\Omega)\times H_{n}^1(\Omega),
\end{eqnarray}
and the energy inequality \eqref{ls40} holds.
\end{theorem}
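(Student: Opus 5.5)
We will pass to the limit $\epsilon\to0$ in the solutions $(\rho^\epsilon,\u^\epsilon,\mu^\epsilon,c^\epsilon)$ of \eqref{eq2.7}-\eqref{eq2.8} given by Theorem \ref{thm2.1}, with $\delta$ fixed. The first step is to obtain bounds uniform in $\epsilon$. We start from \eqref{eq2.224} with $\sigma=1$; its right-hand side still contains $\|\rho^\epsilon\|_{L^{12/5}}^6$, so the energy alone does not close. We will therefore use the Bogovskii operator $\mathcal{B}$. Testing $\eqref{eq2.7}_2$ with $\mathcal{B}\big((\rho^\epsilon)^{\alpha}-((\rho^\epsilon)^\alpha)_\Omega\big)$ for a suitable $\alpha>0$ gives a bound on $\int (\ln\frac1\delta)^{-1}(\rho^\epsilon)^{11+\alpha}$. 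For fixed $\delta$, the capillary terms $\rho\mu\na c-\rho\partial_cf^\delta\na c=-\Delta c\na c$ are handled using $|\partial_c f^\delta|\le C(\delta)(1+|c|)$ and $\|\Delta c\|_{L^2}\le \|\rho\mu\|_{L^2}+\|\rho\,\partial_cf^\delta\|_{L^2}$. The $\epsilon$-terms in $\eqref{eq2.7}_2$ are estimated by the $\epsilon^4|\na\rho^{11/2}|^2$ dissipation. Because the artificial pressure has exponent $11$, far above the $12/5$ appearing on the right of \eqref{eq2.224}, Young's inequality lets us absorb $\|\rho\|_{L^{12/5}}^6$. The target is
\[
\|\rho^\epsilon\|_{L^{12}}+\|\u^\epsilon\|_{H^1_0}+\|\mu^\epsilon\|_{H^1}+\|c^\epsilon\|_{H^1}+\epsilon^2\|\na\rho^\epsilon\|_{L^2}\le C(\delta),
\]
together with a $W^{2,p}$ bound on $c^\epsilon$ from $\eqref{eq2.7}_4$, independent of $\epsilon$.

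Next we extract subsequences: $\rho^\epsilon\rightharpoonup\rho$ in $L^{12}$, $\u^\epsilon\rightharpoonup\u$ in $H_0^1$, $\mu^\epsilon\rightharpoonup\mu$ in $H^1$, and $c^\epsilon\to c$ strongly in $H^1$ by the compact embedding of $W^{2,p}$. All $\epsilon$-terms vanish, since $\epsilon^4\Delta\rho^\epsilon\to0$ in $\mathcal D'$ and $\epsilon^4\na\rho\cdot\na\u\to0$. The mass constraints \eqref{eq2.7}$_5$ pass to the limit to give $\int\rho c=m_2$. The convective terms $\rho\u\otimes\u$ and $\rho\u c$ converge by compactness of $\u^\epsilon$ in $L^6$ minus and by strong convergence of $c^\epsilon$. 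The relation $\rho\mu=\rho\partial_c f^\delta-\Delta c$ passes to the limit because $\partial_cf^\delta$ is Lipschitz for fixed $\delta$ and $c^\epsilon\to c$ strongly. The energy inequality \eqref{ls40} follows from weak lower semicontinuity; the remainder terms are $O(\epsilon)$.

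The main obstacle is the strong convergence of $\rho^\epsilon$, which is needed to identify the limit of the pressure $(\ln\frac1\delta)^{-1}\rho^{11}+(\gamma-1)\rho^\gamma+H\rho$. This pressure is monotone in $\rho$ because $H$ is constant, by \eqref{eq1.12}. We will use Lions' effective viscous flux argument. Testing the momentum equations with $\na\Delta^{-1}(\mathbf 1_{\Omega}\rho^\epsilon)$ times cut-offs, and using the div-curl lemma for the commutator, we obtain
\[
\overline{P\rho}-(\lambda_1'+\lambda_2)\overline{\rho\,\div\u}=\overline{P}\rho-(\lambda_1'+\lambda_2)\rho\,\div\u .
\]
The capillary term does not spoil this identity because $\na c^\epsilon$ converges strongly. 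Since $\rho\in L^{12}\subset L^2$, the limit pair satisfies the renormalized equation by DiPerna--Lions. The $\epsilon$-regularized equation gives $\int\overline{\rho\,\div\u}\le 0$ (up to vanishing terms), while the limit gives $\int\rho\,\div\u=0$. Combining these, we get $\int(\overline{P\rho}-\overline P\rho)\le0$, and monotonicity then yields $\overline{\rho\ln\rho}=\rho\ln\rho$, hence strong convergence. Once $\rho^\epsilon\to\rho$ strongly in $L^p$ for $p<12$, all remaining nonlinear terms converge, and we obtain the weak solution described in Theorem \ref{thm3.1}.
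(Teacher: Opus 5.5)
Your proposal is correct and follows essentially the paper's route. The paper also tests the momentum equation with $\mathcal{B}(\rho^\epsilon-\rho_0)$ (your $\alpha=1$) to get the $\epsilon$-uniform $L^{12}$ bound from the $\rho^{11}$ artificial pressure. It then uses the effective viscous flux identity and compares $\int\rho\,\div\u=0$ (from the renormalized limit equation) with $\lim\int\rho^\epsilon\div\u^\epsilon\le 0$, and concludes via monotonicity of the pressure.

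The differences are minor. You get strong $H^1$ convergence of $c^\epsilon$ from a uniform $W^{2,p}$ bound, where the paper compares $\lim\int|\na c^\epsilon|^2$ with $\int|\na c|^2$. Your last step is slightly misordered: monotonicity gives $\overline{P\rho}=\overline{P}\rho$ a.e., and strict monotonicity of the $\rho^{11}$ term then gives strong convergence (the paper uses a Minty-type argument); $\overline{\rho\ln\rho}=\rho\ln\rho$ is a consequence of this, not the intermediate step.
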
 

Before proving Theorem \ref{thm3.1}, 
we   derive  some uniform in $\epsilon $ estimates on $(\rho^\epsilon,\u^\epsilon,\mu^\epsilon, c^\epsilon)$.
\begin{lemma}\label{lem3.1} The  solution $(\rho^{\eps},\u^{\eps},\mu^{\eps},c^{\eps})$ of  the  problem \eqref{eq2.7}-\eqref{eq2.8} satisfies  \begin{equation}\label{eq3.1a}
\Vert (\rho^\epsilon)^{12}\Vert_{L^1}+\Vert(\rho^\epsilon)^3\frac{\partial f^\delta(\rho^\epsilon,c^\epsilon)}{\partial\rho^\epsilon}\Vert_{L^1}+\Vert c^\epsilon\Vert_{H^1}\leq C,
\end{equation}
where the $C$ is independent of $\eps.$
\end{lemma}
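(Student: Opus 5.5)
We would prove Lemma \ref{lem3.1} by a Bogovskii-type pressure estimate, combined with the energy bound \eqref{eq2.224}, which is uniform in $\epsilon$ once the right-hand side $C(1+\|\rho^\epsilon\|_{L^{12/5}}^6)$ is controlled. With $\sigma=1$, \eqref{eq2.224} and \eqref{eq2.246} bound $\|\na\u^\epsilon\|_{L^2}^2$, $\|\na\mu^\epsilon\|_{L^2}^2$, $\epsilon\|\na c^\epsilon\|_{L^2}^2$ and $\|\mu^\epsilon\|_{L^6}$ by $C(1+\|\rho^\epsilon\|_{L^{12/5}}^6)$. Since $\|\rho^\epsilon\|_{L^1}=m_1$, interpolation gives $\|\rho^\epsilon\|_{L^{12/5}}^6\le C\|\rho^\epsilon\|_{L^{12}}^{\alpha}$ with $\alpha$ strictly less than $12$: indeed $\frac{5}{12}=\frac{\theta}{1}+\frac{1-\theta}{12}$ gives $\theta=\frac{4}{11}$, so $\alpha=6(1-\theta)=\frac{42}{11}$. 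It therefore suffices to show $\int(\rho^\epsilon)^{12}\le C(1+\|\rho^\epsilon\|_{L^{12/5}}^{6})^{\beta}$ with $\frac{42}{11}\beta<12$; Young's inequality then closes the estimate.

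The main step tests the momentum equation $\eqref{eq2.7}_2$ with $\Phi=\mathcal{B}\big(\rho^\epsilon-(\rho^\epsilon)_\Omega\big)$, where $\mathcal{B}$ is the Bogovskii operator, so that $\|\Phi\|_{W^{1,q}}\le C\|\rho^\epsilon\|_{L^q}$. The pressure $(\ln\frac1\delta)^{-1}\rho^{11}+\rho^2\partial_\rho f^\delta=(\ln\frac1\delta)^{-1}\rho^{11}+(\gamma-1)\rho^\gamma+H\rho$ paired with $\rho-(\rho)_\Omega$ produces $(\ln\frac1\delta)^{-1}\int\rho^{12}+\int\rho^3\partial_\rho f^\delta$, modulo lower-order terms. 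These lower-order terms are bounded by $\frac{C}{m_1}\int\rho^{11}$, which interpolation against $L^1$ absorbs.

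The remaining terms are estimated as follows.
\begin{itemize}
\item The convective term satisfies $\int\rho|\u|^2|\na\Phi|\le\|\rho\|_{L^{12}}^2\|\u\|_{L^6}^2$.
\item The viscous term satisfies $\|\na\u\|_{L^2}\|\rho\|_{L^2}$.
\item The $\epsilon$-terms $\epsilon^2\rho\u$ and $\epsilon^4\na\rho\cdot\na\u$ are controlled using the $\epsilon^4|\na\rho^{11/2}|^2$ and $\epsilon^4|\na\sqrt\rho|^2$ bounds in \eqref{eq2.224}.
\item The forcing is harmless.
\end{itemize}
For the capillary part, we rewrite $\rho\mu\na c-\rho\partial_cf^\delta\na c=-\Delta c\na c={\rm div}\mathbb{S}_c$ and integrate by parts to get $\int|\na c|^2|\na\Phi|$. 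This is the dangerous term, since $\na c$ is controlled only with the factor $\epsilon$. To handle it we first estimate $\|c\|_{H^1}$ independently of $\epsilon$ by testing $\eqref{eq2.7}_4$ with $c$. This gives
\[
\int|\na c|^2+\int\rho c\,(f_2^{\delta'}(c)-\theta_cc)=\int\rho\mu c.
\]
By \eqref{dan}, the second term is bounded below by $-M\int\rho$. The right-hand side is at most $\|\rho\|_{L^{3}}\|\mu\|_{L^6}\|c\|_{L^2}$. Since $|f_2^{\delta'}-\theta_cc|\le C(\delta)$, $\|c\|_{L^2}$ is controlled by \eqref{ls6}, up to the small $\epsilon^3$ term. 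This gives $\|c\|_{H^1}\le C(1+\|\rho\|_{L^{12/5}}^6)\|\rho\|_{L^3}$, which is again polynomial in $\|\rho\|_{L^{12}}$. Elliptic regularity for $\Delta c=\rho(\partial_cf^\delta-\mu)$ then gives $\na c\in L^{q}$ for some $q>2$, bounded polynomially in $\|\rho\|_{L^{12}}$, so $\int|\na c|^2|\na\Phi|$ is controlled as well.

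The main obstacle is the exponent bookkeeping. Every term must be bounded by $C\|\rho\|_{L^{12}}^{s}$ with $s<12$, and the product of the $\mu$-bound (degree $\frac{42}{11}$) with the $\rho$-factors must stay below $12$. The large pressure exponent $11$ gives considerable room, so we would first try crude Hölder estimates and only refine the exponents (e.g. by replacing $L^{12/5}$ with $L^{6/5}$ where possible) if they fail. Once $\int\rho^{12}$ is bounded, the bounds on $\int\rho^3\partial_\rho f^\delta$ and $\|c\|_{H^1}$ follow from the same inequalities.
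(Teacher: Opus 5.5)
Your proposal is correct and follows the paper's proof. You test $\eqref{eq2.7}_2$ with $\mathcal{B}(\rho^\epsilon-\rho_0)$, where $\rho_0=(\rho^\epsilon)_\Omega$, and control the remaining terms through \eqref{eq2.224}--\eqref{eq2.246}. Since the energy only controls $\epsilon\|\nabla c^\epsilon\|_{L^2}^2$, you get an $\epsilon$-uniform bound on $\|\nabla c^\epsilon\|_{L^2}$, polynomial in norms of $\rho^\epsilon$, by testing $\eqref{eq2.7}_4$ with $c^\epsilon$. Everything is then absorbed into $(\ln\frac{1}{\delta})^{-1}\int(\rho^\epsilon)^{12}$ via interpolation against $\|\rho^\epsilon\|_{L^1}=m_1$ and Young's inequality.

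The only deviation is in the capillary term. The paper keeps it as $\int\rho^\epsilon(\partial_c f^\delta-\mu^\epsilon)\nabla c^\epsilon\cdot\mathcal{B}(\rho^\epsilon-\rho_0)$ and bounds it by $\|\mathcal{B}(\rho^\epsilon-\rho_0)\|_{L^\infty}\|\nabla c^\epsilon\|_{L^2}\|\rho^\epsilon(\mu^\epsilon-\partial_c f^\delta)\|_{L^2}$. This avoids your elliptic $W^{2,2}$ step. Either way, a crude count keeps every power of $\|\rho^\epsilon\|_{L^{12}}$ below $12$, so the bookkeeping you flag does close.
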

\begin{proof} First we recall   the Bogovskii operator ({\it cf.}\cite{NS})
\begin{eqnarray}\label{eq2.36}
\mathcal{B}=[\mathcal{B}_1,\mathcal{B}_2,\mathcal{B}_3]:\,\,\left\{ f\in L^p|\int_{\om} f =0\right\}\to W_0^{1,p}(\Omega),\quad p\in(1,\infty).
\end{eqnarray}  Then,   ${\rm div}\mathcal{B}f=f\,\,a.e.\,\,\Omega$, and the following inequalities  hold true \begin{eqnarray}\label{eq2.37}
\Vert\na\mathcal{B}(f)\Vert_{L^p}\leq C\Vert f\Vert_{L^p},\quad \Vert\mathcal{B}(f)\Vert_{L^p}\leq C\Vert g\Vert_{L^p},
\end{eqnarray}  for $f={\rm div}g$ with  $g\in L^p$  and  $g\cdot \n|_{\partial\Omega}=0$. 

Multiplying  the momentum equation $\eqref{eq2.7}_{2}$ by $\mathcal{B}(\rho^\epsilon-\rho_0)$, we get 
\bnn\begin{aligned}
&\int\left(\left(\ln\frac{1}{\delta}\right)^{-1}(\rho^\epsilon)^{11}+(\rho^\epsilon)^2\frac{\partial f^\delta}{\partial \rho^\epsilon}\right)\rho^\epsilon\\
&=\int\left(\left(\ln\frac{1}{\delta}\right)^{-1}(\rho^\epsilon)^{11}+(\rho^\epsilon)^2\frac{\partial f^\delta}{\partial \rho^\epsilon}\right)\rho_0-\int(\rho^\epsilon \mathbf{g}_1+\mathbf{g}_2)\cdot\mathcal{B}(\rho^\epsilon-\rho_0)\\
&\quad+\epsilon^2\int\rho^\epsilon\u^\epsilon\cdot\mathcal{B}(\rho^\epsilon-\rho_0)+\epsilon^4\int\na\rho^\epsilon\cdot\na\u^\epsilon\mathcal{B}(\rho_\epsilon-\rho_0)-\int\rho^\epsilon\u^\epsilon\otimes\u^\epsilon:\na\mathcal{B}(\rho^\epsilon-\rho_0)\\
&\quad+\int\lambda_1(\na\u^\epsilon+(\na\u^\epsilon)^T):\na\mathcal{B}(\rho_\epsilon-\rho_0)+\lambda_2{\rm div}\u^\epsilon{\rm div}\mathcal{B}(\rho^\epsilon-\rho_0)\\
&\quad+\int\left(\rho^\epsilon\frac{\partial f^\delta}{\partial c^\epsilon}-\rho^\epsilon\mu^\epsilon\right)\mathcal{B}(\rho^\epsilon-\rho_0)\cdot \na c^\epsilon\\
&=:\sum_{i=1}^7I_i.
\end{aligned}\enn
We estimate them one by one. 
It follows from \eqref{eq1.12} that   \bnn\ba
I_1+I_2&\leq \left|\int\left(\left(\ln\frac{1}{\delta}\right)^{-1}(\rho^\epsilon)^{11}+(\rho^\epsilon)^2\frac{\partial f^\delta}{\partial\rho^\epsilon}\right)\rho_0\right|+\left|\int(\rho^\epsilon \mathbf{g}_1+\mathbf{g}_2)\cdot \mathcal{B}(\rho^\epsilon-\rho_0)\right|\\
&\leq\frac{1}{8}\int\left(\left(\ln\frac{1}{\delta}\right)^{-1}(\rho^\epsilon)^{12}+(\rho^\epsilon)^3\frac{\partial f^\delta}{\partial\rho^\epsilon}\right)+C,
\ea\enn
and from  \eqref{eq2.224} that  
\bnn\ba
I_3+I_4+I_5&\leq \left(\epsilon^2\Vert\rho^\epsilon\Vert_{L^2}+\epsilon^4\Vert\na\rho^\epsilon\Vert_{L^2}\right)\Vert\u^\epsilon\Vert_{H^1}\Vert\mathcal{B}(\rho^\epsilon-\rho_0)\Vert_{L^\infty}\\
&\quad+\Vert\rho^\epsilon\Vert_{L^{\frac{12}{5}}}\Vert\u^\epsilon\Vert_{L^6}^2\Vert\na\mathcal{B}(\rho^\epsilon-\rho_0)\Vert_{L^4}\\
&\leq \frac{1}{8}\int\left(\ln\frac{1}{\delta}\right)^{-1}(\rho^\epsilon)^{12} +C,
\ea\enn
\bnn\ba
I_6&\leq\left|\int\lambda_1(\na\u^\epsilon+(\na\u^\epsilon)^T):\na\mathcal{B}(\rho^\epsilon-\rho_0)+\lambda_2{\rm div}\u^\epsilon{\rm div}\mathcal{B}(\rho^\epsilon-\rho_0)\right|\\
&\leq  \frac{1}{8}\int \left(\ln\frac{1}{\delta}\right)^{-1}(\rho^\epsilon)^{12} +C.
\ea\enn
To deal with the last term, we multiply $\eqref{eq2.16}_3$ by $c^\epsilon$ and deduce 
\bnn\ba
\Vert\na c^\epsilon\Vert_{L^2}^2&=\int\left(\rho^\epsilon\mu^\epsilon-\rho^\epsilon\frac{\partial f^\delta}{\partial c^\epsilon}\right)c^\epsilon\\ 
&\leq C(\delta)\Vert c^\epsilon\Vert_{L^2}\left(\Vert\rho^\epsilon\Vert_{L^3}\Vert\mu^\epsilon\Vert_{L^6}+\Vert\rho^\epsilon\Vert_{L^2}\right)\\
&\le\frac{1}{2}\Vert\na c^\epsilon\Vert_{L^2}^2+C(\delta)(\Vert\rho^\epsilon\Vert_{L^5}^{4}+1),
\ea\enn
where the last inequality is due to   \eqref{eq2.246}.
Thus,
\bnn\ba
I_7&\leq  C\Vert\mathcal{B}(\rho^\epsilon-\rho_0)\Vert_{L^\infty}\Vert\na c^\epsilon\Vert_{L^2}\left(\int\left|\rho^\epsilon\mu^\epsilon-\rho^\epsilon\frac{\partial f^\delta}{\partial c^\epsilon}\right|^2\right)^{\frac{1}{2}}\\
&\leq  C(\delta)\Vert \mathcal{B}(\rho^\epsilon-\rho_0)\Vert_{L^\infty}\Vert\na c^\epsilon\Vert_{L^2}(\Vert\rho^\epsilon\Vert_{L^3}\Vert\mu^\epsilon\Vert_{L^6}+\Vert\rho^\epsilon\Vert_{L^2})\\
&\leq \frac{1}{8}\int \left(\ln\frac{1}{\delta}\right)^{-1}(\rho^\epsilon)^{12} +C.
\ea\enn
Taking the above inequalities into accounts,  we conclude  
\[\Vert\na c^\epsilon\Vert_{L^2}+\int\left(\left(\ln\frac{1}{\delta}\right)^{-1}(\rho^\epsilon)^{12}+(\rho^\epsilon)^3\frac{\partial f^\delta}{\partial \rho^\epsilon}\right)\leq C.\]
The proof of Lemma \ref{lem3.1} is thus completed.\end{proof}

\bigskip

Now we are in a position to prove Theorem \ref{thm3.1}.  In fact, 
the \eqref{eq3.1.2}  follows directly from   Lemma \ref{lem3.1} and  \eqref{eq2.224}-\eqref{eq2.246}.  Furthermore, \be\ba\label{eq3.3}
&\rho^\epsilon\,-\!\!\!\rightharpoonup\,\rho\quad{\rm in}\,\, L^{12}\cap L^{\gamma+1},\quad (\rho^\epsilon)^{11} \,-\!\!\!\rightharpoonup\,\overline{\rho^{11}}\quad {\rm in}\,\,L^{\frac{12}{11}},
 \\
 &(\na\u^\epsilon,\na\mu^\epsilon,\na c^\epsilon)\,-\!\!\!\rightharpoonup\,(\na\u,\na\mu,\na c)\quad {\rm in}\,\,\, L^2,
\\&
(\u^\epsilon,\mu^\epsilon,c^\epsilon)\longrightarrow(\u,\mu,c)\quad {\rm in}\,\,\, L^{p_1}\quad {\rm for}\,\, p_1\in [1,6),
\\&
\epsilon^4\na\rho^\epsilon\longrightarrow 0\quad {\rm in}\,\,L^2,
\\&
\epsilon^2\rho^\epsilon\longrightarrow 0,\quad\epsilon^2\rho^\epsilon\u^\epsilon\longrightarrow 0,\quad\epsilon\rho^\epsilon c^\epsilon\longrightarrow 0,\quad\epsilon^4\na\rho^\epsilon\na\u^\epsilon\longrightarrow 0\quad {\rm in}\,\,L^1.
\ea\ee
Consequently, for some $p>1$,
\begin{eqnarray}\label{eq3.8}
(\rho^\epsilon\u^\epsilon,\rho^\epsilon\mu^\epsilon)\,-\!\!\!\rightharpoonup\,(\rho\u,\rho\mu)\quad {\rm in}\,\, L^2,\quad \rho^\epsilon\u^\epsilon\otimes\u^\epsilon\,-\!\!\!\rightharpoonup\,\rho\u\otimes\u\quad {\rm in}\,\,L^p.
\end{eqnarray}
Due to the condition \eqref{eq1.12}, the definition of $f^{\de}$,  and \eqref{eq3.3},   we have 
\be\ba\label{eq3.9}
&\quad \rho^\epsilon\frac{\partial f^\delta}{\partial c^\epsilon}= \rho^\epsilon (f_{2}^{\delta '}(c^\epsilon)-\theta_cc^\epsilon) \,-\!\!\!\rightharpoonup \, \rho (f_{2}^{\delta '}(c)-\theta_c c)=\rho\frac{\partial f^\delta}{\partial c}\,\,{\rm in}\,\, L^{2},
\\&
(\rho^\epsilon)^2\frac{\partial f^\delta}{\partial \rho^\epsilon}=(\gamma-1)(\rho^\epsilon)^\gamma+\rho^\epsilon H\,-\!\!\!\rightharpoonup\,(\gamma-1)\overline{\rho^\gamma}+\rho H=\overline{\rho^2\frac{\partial f^\delta}{\partial \rho}}\,\,{\rm in}\,\, L^{\frac{\gamma+1}{\gamma}}.
\ea\ee

Next,  by \eqref{eq3.3}, \eqref{eq3.9},   if  we  multiply  $\eqref{eq2.7}_4$ by $c^{\eps}$  and by $c$ respectively, we obtain
\[\lim_{\epsilon\to 0}\int|\na c^\epsilon|^2=\lim_{\epsilon\to 0}\int\rho^\epsilon\mu^\epsilon c^\epsilon-\lim_{\epsilon\to 0}\int\rho^\epsilon\frac{\partial f^\delta}{\partial c^\epsilon}c^\epsilon=\int\rho\mu c-\int\rho\frac{\partial f^\delta}{\partial c}c,\]
and \[\lim_{\epsilon\to 0}\int\na c^\epsilon\cdot\na c=\lim_{\epsilon\to 0}\int\rho^\epsilon\mu^\epsilon c-\lim_{\epsilon\to 0}\int\rho^\epsilon\frac{\partial f^\delta}{\partial c^\epsilon}c=\int\rho\mu c-\int\rho\frac{\partial f^\delta}{\partial c}c.\]
Thus, 
\[ \lim_{\epsilon\to 0}\int|\na c^\epsilon|^2  =\lim_{\epsilon\to 0}\int\na c^\epsilon\cdot\na c= \int|\na c|^2,\]
which together with  \eqref{eq3.3} guarantee that   $\na c^\epsilon$ strongly converges to  $\na c.$
\medskip

We can take the $\epsilon$ limit for $(\rho^{\eps},\u^{\eps},\mu^{\eps},c^{\eps})$ to complete the proof of Theorem \ref{thm3.1}, on the condition that
\be\la{ls11}
\rho^\epsilon\longrightarrow \rho\quad {\rm in}\,\, L^1.
\ee
Let us prove  \eqref{ls11}.   In fact,   the compactness results in \eqref{eq3.3} guarantee that  the limit $(\rho,\u)$ satisfies 
\be\la{ls15}
{\rm div}(\rho\u)=0\quad {\rm in}\,\, \mathcal{D}'(\om).
\ee
Extend $(\rho,\u)$ by zero to the whole space; the same argument as in \cite{FNP} shows
\begin{eqnarray}\label{ls12}
{\rm div}(b(\rho)\u)+(b'(\rho)\rho-b(\rho)){\rm div}\u=0\quad {\rm in}\,\, \mathcal{D}'(\mathbb{R}^3),
\end{eqnarray}
where $b(z)=z$, or $b\in C^1[0,\infty)$ with $b'(z)=0$ for large $z$.
Define
\begin{eqnarray*}
C^2[0,\infty)\ni b_n(\rho)=\left\{\begin{aligned}
&\rho\ln\left(\rho+\frac{1}{n}\right),\quad\rho\leq n,\\
&(n+1)\ln\left(n+1+\frac{1}{n}\right),\quad\rho\geq n+1.
\end{aligned}\right.
\end{eqnarray*}
 Choosing $b_n$ in \eqref{ls12} and passing  $n\to \infty$, we  have 
${\rm div}(\u\rho\ln\rho)+\rho{\rm div}\u=0,$
and thus,
\begin{eqnarray}\label{eq4.271}
\int\rho{\rm div}\u=0.
\end{eqnarray}
If we multiply $\eqref{eq2.7}_1$ by $b_n'(\rho^\epsilon)$, we infer 
\begin{equation}\label{eq3.133}
\begin{aligned}
&\int(b_n'(\rho^\epsilon)\rho^\epsilon-b_n(\rho^\epsilon)){\rm div}\u^\epsilon\\
&=\int b_n'(\rho^\epsilon) {\rm div}(\rho^{\eps}\u^\epsilon)= \epsilon^2\int\rho_0 b_n'(\rho^\epsilon)-\epsilon^2\int\rho^\epsilon b_n'(\rho^\epsilon)-\epsilon^4\int b_n''(\rho^\epsilon)|\na\rho^\epsilon|^2\\
&\leq \epsilon^2\int\rho_0b_n'(\rho^\epsilon)-\epsilon^2\int\rho^\epsilon b_n'(\rho^\epsilon)-\epsilon^4\int_{\{x:b_n''\leq 0\}}b_n''(\rho^\epsilon)|\na\rho^\epsilon|^2.
\end{aligned}
\end{equation}
By  \eqref{ls13}, it has    $\Vert\na\rho^\epsilon\Vert_{L^2}\leq  C(\epsilon)$.  Thus,   the definition of $b_n$ implies 
\begin{equation}\label{eq3.14}
\begin{aligned}
-\epsilon^4\int_{\{x:b_n''\leq 0\}}b_n''(\rho^\epsilon)|\na\rho^\epsilon|^2\leq C(\epsilon)\int_{\{x:n\leq \rho^\epsilon\leq n+1\}}|\na\rho^\epsilon|^2\to 0\quad {\rm as}\,\,\,n\to \infty.
\end{aligned}
\end{equation} 
Thanks to  \eqref{eq3.1a}, there exists some constant $C$  uniformly in $\eps$ such that
\be\la{ls14}\left|\eps^{2}\int\rho_0b_n'(\rho^\epsilon)  +\eps^{2}\int\rho^\epsilon b_n'(\rho^\epsilon)\right|\leq C\eps^{2}\quad {\rm as}\,\,\,n\to \infty.\ee
Thus,  combining equalities  \eqref{eq3.133}-\eqref{ls14}  with   \eqref{eq4.271}, we arrive at 
\begin{eqnarray}\label{eq4.301}
\lim_{\epsilon\to 0}\int\rho^\epsilon{\rm div}\u^\epsilon =\lim_{\epsilon\to 0}\lim_{n\to \infty}\int(b_n'(\rho^\epsilon)\rho^\epsilon-b_n(\rho^\epsilon)){\rm div}\u^\epsilon\leq 0=\int\rho{\rm div}\u.
\end{eqnarray}

\begin{lemma}\label{lem3.3} The following holds: 
\begin{eqnarray}\label{eq3.16}
\lim_{\epsilon\to 0}\int\phi\rho^\epsilon \,\mathbb{F}(\rho^{\eps},\u^{\eps})=\int\phi\rho \,\overline{\mathbb{F}(\rho,\u)},\quad \forall\phi\in C_0^\infty(\Omega),
\end{eqnarray}
where  
\[\mathbb{F}(\rho,\u):= \left(\ln\frac{1}{\delta}\right)^{-1}\rho^{11}+\rho^2\frac{\partial f^\delta(\rho,c)}{\partial \rho}-(2\lambda_1+\lambda_2){\rm div}\u,\]
\[\overline{\mathbb{F}(\rho,\u)}= \left(\ln\frac{1}{\delta}\right)^{-1}\overline{\rho^{11}}+\overline{\rho^2\frac{\partial f^\delta(\rho,c)}{\partial \rho}}-(2\lambda_1+\lambda_2){\rm div}\u.\]
\end{lemma}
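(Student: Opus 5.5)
This is the weak continuity of the effective viscous flux. I will prove it by the standard Lions–Feireisl argument, testing both the approximate and the limit momentum equations with $\phi\nabla\Delta^{-1}(\phi\rho^\epsilon)$ and $\phi\nabla\Delta^{-1}(\phi\rho)$, respectively. Here $\Delta^{-1}$ is the inverse Laplacian on $\mathbb{R}^3$, applied to functions extended by zero outside $\Omega$. Write $\mathcal{A}_j=\partial_j\Delta^{-1}$ and $\mathcal{R}_{ij}=\partial_i\partial_j\Delta^{-1}$. Since $\rho^\epsilon$ is bounded in $L^{12}$ uniformly in $\epsilon$ (Lemma~\ref{lem3.1}), $\nabla\mathcal{A}(\phi\rho^\epsilon)$ is bounded in $L^{12}$. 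The compact embedding then gives $\mathcal{A}(\phi\rho^\epsilon)\to\mathcal{A}(\phi\rho)$ strongly in $C(\overline\Omega)$.

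\textbf{Step 1: identities.} Testing $\eqref{eq2.7}_2$ with $\phi\mathcal{A}(\phi\rho^\epsilon)$ and using $\mathrm{div}\,\mathcal{A}(\phi\rho^\epsilon)=\phi\rho^\epsilon$, I obtain
\[
\int\phi^2\rho^\epsilon\,\mathbb{F}(\rho^\epsilon,\u^\epsilon)=\int\phi\,\u^\epsilon_i\,\rho^\epsilon\u^\epsilon_j\,\mathcal{R}_{ij}(\phi\rho^\epsilon)+R^\epsilon .
\]
The term $R^\epsilon$ collects three groups:
\begin{itemize}
\item terms containing $\nabla\phi$;
\item the commutator $\lambda_1\int(\nabla\u^\epsilon:\nabla\mathcal{A}-\mathrm{div}\u^\epsilon\,\mathrm{div}\mathcal{A})$, rewritten via $\mathcal{R}_{ij}$ in the standard way;
\item the $\epsilon$-terms, the forcing, and the capillary term $\int(\rho^\epsilon\partial_c f^\delta-\rho^\epsilon\mu^\epsilon)\nabla c^\epsilon\cdot\phi\mathcal{A}(\phi\rho^\epsilon)$.
\end{itemize}
The same computation on the limit equation, which holds with $\overline{\rho^{11}}$ and $\overline{\rho^2\partial_\rho f^\delta}$, gives the analogous identity with $\rho$, $\u$ and the barred quantities. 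Replacing $\phi$ by the pair $\phi,\phi$ yields $\phi^2$; this is harmless, since the statement for all test functions $\phi$ follows by the usual localisation argument.

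\textbf{Step 2: convergence of the lower-order terms.} Each term of $R^\epsilon$ converges to its limit counterpart, because it is a product of a weakly convergent factor and a strongly convergent one.
\begin{itemize}
\item The $\epsilon$-terms vanish by \eqref{eq3.3}.
\item The forcing terms converge since $\mathcal{A}(\phi\rho^\epsilon)$ converges uniformly.
\item In the capillary term, $\rho^\epsilon\mu^\epsilon$ and $\rho^\epsilon\partial_cf^\delta$ converge weakly in $L^2$ by \eqref{eq3.8}--\eqref{eq3.9}. We have just shown that $\nabla c^\epsilon\to\nabla c$ strongly in $L^2$, and $\mathcal{A}$ converges uniformly, so the term passes to the limit.
\item The $\nabla\phi$ and viscous terms pair $\nabla\u^\epsilon$ (weak in $L^2$) or $\rho^\epsilon\u^\epsilon\otimes\u^\epsilon$ (weak in $L^p$) with $\mathcal{A}(\phi\rho^\epsilon)$ (strong), so they also converge.
\end{itemize}

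\textbf{Step 3: the convective term (main obstacle).} It remains to prove
\[
\int\phi\,\u^\epsilon_i\big(\rho^\epsilon\u^\epsilon_j\mathcal{R}_{ij}(\phi\rho^\epsilon)-\phi\rho^\epsilon\mathcal{R}_{ij}(\rho^\epsilon\u^\epsilon_j)\big)\to\int\phi\,\u_i\big(\rho\u_j\mathcal{R}_{ij}(\phi\rho)-\phi\rho\mathcal{R}_{ij}(\rho\u_j)\big).
\]
The second piece in each bracket comes from rewriting $\mathrm{div}(\rho^\epsilon\u^\epsilon\otimes\u^\epsilon)$, using $\mathrm{div}(\rho^\epsilon\u^\epsilon)=\epsilon^4\Delta\rho^\epsilon+\epsilon^2(\rho_0-\rho^\epsilon)$; the resulting error terms are $O(\epsilon)$ by the bounds in \eqref{eq2.224} and \eqref{ls13}-type estimates.

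For the commutator I would invoke the div–curl (Coifman–Meyer type) lemma. The bracket $v_j\mathcal{R}_{ij}(w)-w\mathcal{R}_{ij}(v_j)$ converges weakly in $L^r$ whenever $v^\epsilon\rightharpoonup v$ in $L^p$, $w^\epsilon\rightharpoonup w$ in $L^q$, and $1/p+1/q=1/r<1$. Here $\rho^\epsilon\in L^{12}$ and $\rho^\epsilon\u^\epsilon\in L^{12/7}$, so $1/r=7/12+1/12=2/3$. The product with $\u^\epsilon$, which converges strongly in $L^{s}$ for every $s<6$, needs $2/3+1/s<1$, i.e.\ $s>3$, which is admissible.

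Combining Steps 1--3 and comparing the two identities gives \eqref{eq3.16}.
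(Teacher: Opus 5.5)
Your proposal is correct and follows essentially the same route as the paper. Both test the approximate and limit momentum equations with $\phi\nabla\Delta^{-1}$ of the zero-extended density, pass to the limit in the lower-order terms by weak--strong pairings (using the strong convergence of $\nabla c^\epsilon$ for the capillary term), and discard the $\epsilon$-terms produced by rewriting the convective term via $\eqref{eq2.7}_1$. The remaining commutator is then handled with the div--curl lemma for $v^\epsilon=\rho^\epsilon u^\epsilon_j$, $w^\epsilon=\rho^\epsilon$.

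The only difference is cosmetic. The paper uses $\partial_i\Delta^{-1}(\rho^\epsilon)$ instead of your $\partial_i\Delta^{-1}(\phi\rho^\epsilon)$, since the zero extension is already compactly supported, so it obtains the weight $\phi$ rather than $\phi^2$ directly and needs no localisation step.
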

\begin{proof} The proof is provided in the Appendix.  \end{proof}

On account  of    \eqref{eq4.301} and  \eqref{eq3.16}, one has 
\begin{eqnarray}\label{eq3.24}
\lim_{\epsilon\to 0}\int\rho^\epsilon\left(\left(\ln\frac{1}{\delta}\right)^{-1}(\rho^\epsilon)^{11}+(\rho^\epsilon)^2\frac{\partial f^\delta}{\partial\rho^\epsilon}\right)\leq \int\rho\left(\left(\ln\frac{1}{\delta}\right)^{-1}\overline{\rho^{11}}+\overline{\rho^2\frac{\partial f^\delta}{\partial\rho}}\right).
\end{eqnarray}
Recalling  the definition of $f^{\de},$ we use   \eqref{eq3.24} and compute
\begin{eqnarray*}
&&\int\left(\left(\ln\frac{1}{\delta}\right)^{-1}\overline{\rho^{12}}+(\gamma-1)\overline{\rho^{\gamma+1}}+\overline{\rho^2}H\right)\\
&&=\lim_{\epsilon\to 0}\int\rho^\epsilon\left(\left(\ln\frac{1}{\delta}\right)^{-1}(\rho^\epsilon)^{11}+(\rho^\epsilon)^2\frac{\partial f^\delta}{\partial\rho^\epsilon}\right)\\
&&\leq\int\rho\left(\left(\ln\frac{1}{\delta}\right)^{-1}\overline{\rho^{11}}+\overline{\rho^2\frac{\partial f^\delta}{\partial\rho}}\right)=\int\rho\left(\left(\ln\frac{1}{\delta}\right)^{-1}\overline{\rho^{11}}+(\gamma-1)\overline{\rho^\gamma}+\rho H\right),
\end{eqnarray*}
which implies 
\begin{eqnarray}\label{eq3.25}
\int\left(\ln\frac{1}{\delta}\right)^{-1}(\rho\overline{\rho^{11}}-\overline{\rho^{12}})\geq (\gamma-1)\int(\overline{\rho^{\gamma+1}}-\rho\overline{\rho^\gamma})+\int(\overline{\rho^2}-\rho^2)H\geq 0.
\end{eqnarray}   By monotonicity,   for any  constant $\beta>0$,
\begin{eqnarray*}
0&\leq&\int(\rho_\epsilon^{11}-(\rho+\beta\eta)^{11})(\rho_\epsilon-(\rho+\beta\eta))\\
&=&\int\left((\rho^\epsilon)^{12}-(\rho^\epsilon)^{11}\rho-(\rho^\epsilon)^{11}\beta\eta-(\rho+\beta\eta)^{11}\rho^\epsilon+(\rho+\beta\eta)^{12}\right).
\end{eqnarray*}
This  together with  \eqref{eq3.25} imply that, by passing     $\epsilon\to 0$,
\[0\leq \int\left(\overline{\rho^{12}}-\rho\overline{\rho^{11}}-\overline{\rho^{11}}\beta\eta+(\rho+\beta\eta)^{11}\beta\eta\right)\leq \int\left(-\overline{\rho^{11}}+(\rho+\beta\eta)^{11}\right)\beta\eta.\]
If we replace $-\beta$ with $\beta$ in above  and  take the limit  $\beta\to 0$, we  infer 
\bnn \int\left(\rho^{11}-\overline{\rho^{11}}\right)\eta=0.\enn
Hence, $\overline{\rho^{11}}=\rho^{11}$ due to  the   arbitrariness of $\eta$.    This   proves 
  \eqref{ls11}.   

\bigskip

\section{Approximate System   \eqref{eq3.1}-\eqref{eq3.1.2}}

In this section, we consider the system \eqref{eq3.1}-\eqref{eq3.1.2} and derive the uniform-in-$\delta$ estimates. 

\subsection{Uniform in $\delta$ estimates}

Throughout this section, we will drop the superscript and denote $(\rho^\delta,\u^\delta,\mu^\delta,c^\delta)$ as $(\rho,\u,\mu,c)$ for simplicity.  

\begin{proposition}\la{pro4} In addition to  the assumptions  in Theorem \ref{t},  assume that 
\be\la{ls70}
 s\in \left(1,\,\,3-\frac{3}{\gamma}\right].
\ee
Then the solution $(\rho,\u,\mu,c)$     of    \eqref{eq3.1} satisfies 
\begin{equation}\label{eq24}
\begin{aligned}
\left\Vert \left(\ln\frac{1}{\delta}\right)^{-1}\rho^{11}+\rho^\gamma+\rho|\u|^2\right\Vert_{L^s}+\Vert\mu\Vert_{W^{1,2}}^2+\Vert c\Vert_{W^{2,\frac{6s}{5}}}^2+\Vert\u\Vert_{W_0^{1,2}}\leq C,
\end{aligned}
\end{equation}
where, and in the following, the constant $C$ is independent of $\de.$
\end{proposition}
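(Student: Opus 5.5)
Let $P_\delta:=(\ln\frac1\delta)^{-1}\rho^{11}+(\gamma-1)\rho^\gamma+H\rho$ denote the effective pressure and set $\mathcal{N}:=\|P_\delta+\rho|\u|^2\|_{L^s}$. The plan is a bootstrap: bound every quantity in \eqref{eq24} by $C(1+\mathcal{N}^{\kappa})$ with $\kappa<1$, and then absorb.

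\textbf{Step 1 (energy identity).} Test the momentum equation with $\u$, the equation ${\rm div}(\rho\u c)=\Delta\mu$ with $\mu$, and use $\rho\mu=\rho\partial_cf^\delta-\Delta c$. By the renormalized continuity equation \eqref{ls12}, the pressure terms vanish: $\int\rho{\rm div}\u=\int\rho^\gamma{\rm div}\u=0$, and likewise for $\rho^{11}$. The capillary terms cancel against $\int\rho\u\cdot\na c\,\partial_c f^\delta$, which is zero because ${\rm div}(\rho\u)=0$. This yields \eqref{ls40}, and with $\mathbf g_i\in L^\infty$ it gives
\[
\|\u\|_{H^1_0}^2+\|\na\mu\|_{L^2}^2\le C(1+\|\rho\|_{L^{6/5}}^2).
\]

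\textbf{Step 2 (control of $\mu$ and $c$).} Test the $\mu$-equation $\rho\mu=\rho(f_2^{\delta\prime}-\theta_cc)-\Delta c$ with $c-c_0$, where $m_2/m_1=c_0\in(-1,1)$ by \eqref{aver}. This gives
\[
\int|\na c|^2+\int\rho(f_2^{\delta\prime}-\theta_cc)(c-c_0)=\int\rho\mu(c-c_0).
\]
By \eqref{dan}, and because $|c_0|<1$, the quantity $\rho(f_2^{\delta\prime}-\theta_cc)(c-c_0)$ is bounded below by $\tfrac12\rho|f_2^{\delta\prime}-\theta_cc|-C\rho$, so it controls $\|\rho\,\partial_cf^\delta\|_{L^1}$. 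Taking the mean of the $\mu$ equation, $\int\rho\mu=\int\rho\,\partial_cf^\delta$, gives control of $(\mu)_\Omega$, as in the derivation of \eqref{ls2}. Thus
\[
\|\mu\|_{H^1}\le C(1+\|\rho\|_{L^{6/5}})\bigl(\|\na\mu\|_{L^2}+\|\rho\,\partial_c f^\delta\|_{L^1}\bigr).
\]
The right-hand term $\int\rho\mu(c-c_0)$ is then handled by Hölder.

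The $\rho|c|$-weighted part is where the stated interpolation enters. On $\{|c|\ge1+\delta\}$ we have $|f_2^{\delta\prime}-\theta_cc|\sim\ln\frac1\delta$, which yields the $L^{3/2}$ bound on that set quoted in the outline, with the $(\ln\frac1\delta)^{-1}\rho^{11}$ term absorbed by the artificial pressure. On $\{|c|<1+\delta\}$, $c$ is bounded. Elliptic regularity for $-\Delta c=\rho\mu-\rho\,\partial_cf^\delta$ with Neumann data then gives $\|c\|_{W^{2,6s/5}}\le C\|\rho(\mu-\partial_cf^\delta)\|_{L^{6s/5}}$. Here $\rho\in L^{s\gamma}$, $\mu\in L^6$, and the quantity $\sqrt\rho\,\partial_cf^\delta$ is controlled as indicated in the Remark.

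\textbf{Step 3 (pressure estimate, the main obstacle).} Test the momentum equation with $\mathcal B\bigl(P_\delta^{s-1}-(P_\delta^{s-1})_\Omega\bigr)$ and with the analogous test function built from $\rho|\u|^2$, following the steady Navier--Stokes technique of Plotnikov--Weigant and Frehse--Steinhauer--Weigant type. This produces $\int P_\delta^{s}$ on the left. The right-hand side contains:
\begin{itemize}
\item the convective term $\int\rho\u\otimes\u:\na\mathcal B$;
\item the viscous term, bounded by $\|\na\u\|_{L^2}\|P_\delta^{s-1}\|_{L^2}$;
\item the capillary term $\int\na c\otimes\na c:\na\mathcal B$, bounded using $\na c\in W^{1,6s/5}$ from Step 2;
\item the forcing.
\end{itemize}
The convective term needs a weighted estimate of $\rho|\u|^2$. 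When $\na\times\mathbf g_1\neq0$ I would use Bogovskii-type estimates plus a potential decomposition, which needs $\gamma>5/3$. When $\na\times\mathbf g_1=0$, $\rho\mathbf g_1=\rho\na\Phi$ can be absorbed into the pressure, allowing $\gamma>3/2$. The constraint $s\le 3-3/\gamma$ ensures $\|\rho\|_{L^{6/5}}$ and the $L^{6s/5}$ norms are interpolated between $L^1$ (mass $m_1$) and $L^{s\gamma}$ with exponent less than $1$. Absorbing and closing the bootstrap yields \eqref{eq24}.

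I expect the convective term, combined with the $\delta$-dependent set $\{|c|\ge1+\delta\}$, to be the delicate point.
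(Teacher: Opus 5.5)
There is a genuine gap in Step 3, and it lies in the capillary term, not the convective one. The Bogovskii test gives $\|P_\delta\|_{L^s}\le C(1+\|\nabla\mathbf{u}\|_{L^2}+\|\rho|\mathbf{u}|^2\|_{L^s}+\|\nabla c\|_{L^{2s}}^2)$, so you need $\|\nabla c\|_{L^{2s}}^2\le C+\frac14\|P_\delta\|_{L^s}$. Bounding $\nabla c$ through $\|c\|_{W^{2,6s/5}}$, as you propose, cannot deliver this. The piece $\rho\mu$ of $\Delta c$ alone is only controlled by $\|\rho\|_{L^{6s/(5-s)}}^2\|\mu\|_{L^6}^2$. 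Suppose even that $\|\mu\|_{L^6}$ were bounded. Interpolating between the mass and $L^{\gamma s}$ makes $\|\rho\|_{L^{6s/(5-s)}}^2$ sublinear in $\|\rho^\gamma\|_{L^s}$ only when $s(7-3\gamma)<2$, and this fails for every $s\ge1$ if $\gamma\le5/3$ (at $s=1$ it is $\|\rho\|_{L^{3/2}}^2$ against $\|\rho\|_{L^\gamma}^\gamma$). The growth of $\|\mu\|_{L^6}$ when $\nabla\times\mathbf{g}_1\neq0$ only makes this worse.

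The missing idea is to interpolate $\|\nabla c\|_{L^{2s}}$ between $\|\nabla c\|_{L^2}$ and $\|\nabla c\|_{L^{6\gamma/(3+\gamma)}}$. The latter is controlled by $\|\Delta c\|_{L^{2\gamma/(1+\gamma)}}\le C(\|\sqrt{\rho}\,\mu\|_{L^2}+\|\sqrt{\rho}\,\partial_cf^\delta\|_{L^2})\|\rho\|_{L^\gamma}^{1/2}$; this is one place where $\gamma>3/2$ is used, since it makes $6\gamma/(3+\gamma)>2$. In $\|\nabla c\|_{L^{2s}}^2$ the second factor carries the power $\frac{6\gamma}{2\gamma-3}(1-\frac1s)$, which tends to $0$ as $s\downarrow1$. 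The first factor grows sublinearly, because $\|\nabla c\|_{L^2}^2\le C+C\|\rho\|_{L^{6/5}}^2\le C+Cm_1\|\rho\|_{L^{3/2}}$ up to terms carrying a negative power of $\ln\frac1\delta$. Hence $s$ must be taken close to $1$, which your plan never does. Also, the weighted bound on $\sqrt\rho\,\partial_cf^\delta$ in $L^2$ has to be derived, not just quoted: test the $\mu$-equation with $\partial_cf^\delta$ and use the lower bound on $f_2^{\delta''}-\theta_c$ from Remark~\ref{r2.2}.

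Conversely, the convective term is elementary. The Plotnikov--Weigant/Frehse--Steinhauer--Weigant machinery is neither needed nor actually carried out in your plan. One has $\|\rho|\mathbf{u}|^2\|_{L^s}\le\|\rho\|_{L^{3s/(3-s)}}\|\mathbf{u}\|_{L^6}^2$, and $3s/(3-s)\le\gamma s$ is exactly $s\le3-3/\gamma$; that is the real role of \eqref{ls70}. If $\mathbf{g}_1=\nabla\Phi$, then $\int\rho\nabla\Phi\cdot\mathbf{u}=-\int\Phi\,\mathrm{div}(\rho\mathbf{u})=0$. So \eqref{ls40} bounds $\|\mathbf{u}\|_{H^1_0}$ uniformly and gives $\|\rho|\mathbf{u}|^2\|_{L^s}\le C\|\rho^\gamma\|_{L^s}^{1/\gamma}$, with some $s>1$ admissible iff $\gamma>3/2$. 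The gain comes from the energy inequality, not from absorbing $\rho\nabla\Phi$ into the pressure (it is not a gradient). Otherwise $\|\mathbf{u}\|_{L^6}^2\le C(1+\|\rho\|_{L^{6/5}}^2)\le C(1+\|\rho\|_{L^{3/2}})$, and interpolating between $L^1$ and $L^{\gamma s}$ gives growth sublinear in $\|\rho^\gamma\|_{L^s}$ exactly when $\gamma>5/3$.

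Two smaller points. First, your Step 2 test function $c-c_0$ with $c_0=m_2/m_1$ is a neat alternative to the paper's device, which tests with $c$ and controls $m_2(\mu)_\Omega$ through the constant $m$ of \eqref{ls80} and $|m_2|<mm_1$. It works only if you use $\int\rho(c-c_0)=0$ to replace $\mu$ by $\mu-(\mu)_\Omega$. Inserting your $\|\mu\|_{H^1}$ bound into H\"older instead produces $\|\rho\,\partial_cf^\delta\|_{L^1}\|\rho(c-c_0)\|_{L^{6/5}}$, which cannot be absorbed. Also, the coercivity constant is $m-|c_0|$, not $\frac12$. Second, in Step 1 you should cite \eqref{ls40} from Theorem~\ref{thm3.1} rather than re-derive it by testing with $\mathbf{u}$. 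That derivation is not justified when only $\rho\in L^{12}$ and $c\in H^1$ are known.
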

\begin{proof} The proof is divided into several steps.

{\it Step 1.}    Multiplying  $\eqref{eq3.1}_{4}$  by $c$ yields
\begin{equation}\label{eqsh11}
\begin{aligned}
\int\rho(f_2^{\delta'}-\theta_cc)c+\int|\na c|^2=\int\rho\mu c.
\end{aligned}
\end{equation}
By \eqref{eq1.8},  we have  \begin{eqnarray}\label{eqccc}
m_{1}|(c)_\Omega|= \left| \int\rho c -\int\rho (c-(c)_\Omega)\right|\leq |m_{2}|+C\Vert\rho\Vert_{L^{\frac{6}{5}}}\Vert\na c\Vert_{L^2},
\end{eqnarray}
and hence, 
\begin{eqnarray}\label{eqccc1}
\Vert c\Vert_{L^p}\leq C+C\left(1+\Vert\rho\Vert_{L^{\frac{6}{5}}}\right)\Vert\na c\Vert_{L^2},\quad p\in [1,6].
\end{eqnarray}
Let 
\be\la{ls80}
m=\max\left\{c^*,\frac{\frac{|m_2|}{m_1}+1}{2}\right\},
\ee
where  $c^*$ is taken from \eqref{eq2.3}.  Then, using \eqref{dan}, we  integrate   $\eqref{eq3.1}_{4}$  and deduce 
 \begin{equation}\label{mu_mean3}
\begin{aligned}
\left|\int\rho\mu\right|&=\left|\int\rho\left(f_2^{\delta'}-\theta_cc\right)\right|\\
&\leq \left|\int_{\{x:|c|\leq m\}}\rho\left(f_2^{\delta'}-\theta_cc\right)\right|+\left|\int_{\{x:|c|> m\}}\rho\left(f_2^{\delta'}-\theta_cc\right)\right|\\
&\leq C+\frac{1}{m}\int_{\{x:|c|> m\}}\rho\left(f_2^{\delta'}-\theta_cc\right)c.
\end{aligned}
\end{equation}
 Hence,   
\begin{equation}\label{eq20}
\begin{aligned}
\left|m_2(\mu)_\Omega\right|&=\left|m_2\frac{\int\rho\mu}{\int\rho}-m_2\frac{\int\rho(\mu-(\mu)_\Omega)}{\int\rho}\right|\\
&\leq \frac{|m_2|}{m_1}\left(\left|\int\rho\mu\right|+C\Vert\na\mu\Vert_{L^2}\Vert\rho\Vert_{L^{\frac{6}{5}}}\right)\\
&\leq \frac{|m_2|}{m_1}\left(C+\frac{1}{m}\int_{\{x:|c|>m\}}\rho(f_2^{\delta'}-\theta_cc)c+C\Vert\rho\Vert_{L^{\frac{6}{5}}}^2\right),
\end{aligned}
\end{equation}
where, for the inequalities, we have used \eqref{mu_mean3},  
  and the  following inequality
\begin{eqnarray}\label{eq2229}
\Vert\na\u\Vert_{L^2}^2+\Vert\na\mu\Vert_{L^2}^2\leq C+ C\Vert\rho\Vert_{L^{\frac{6}{5}}}^2,
\end{eqnarray}
which comes from the energy inequality  \eqref{ls40} and the assumption \eqref{eq1.11g}.

Next,  utilizing    \eqref{eqccc}, \eqref{eq2229},    and the  interpolation 
$$\Vert\rho\Vert_{L^{\frac{6}{5}}}^2\le \Vert\rho\Vert_{L^{1}}\Vert\rho\Vert_{L^{\frac{3}{2}}}\le m_{1}\Vert\rho\Vert_{L^{\frac{3}{2}}},$$
we compute
\begin{equation}\label{eq229}
\begin{aligned}
&\left|\int\rho c(\mu-(\mu)_\Omega)\right|\\
&\leq C\Vert\na\mu\Vert_{L^2}\left(\Vert\rho\Vert_{L^{\frac{6}{5}}(\{x:|c|<1+\delta\})}
+\Vert\rho c\Vert_{L^{\frac{6}{5}}(\{x:|c|\geq 1+\delta\})}\right)\\
&\leq C+C\Vert\rho\Vert_{L^{\frac{6}{5}}}^2\\
&\quad +C\Vert\na\mu\Vert_{L^2}\left(\Vert\rho(c-(c)_\Omega)\Vert_{L^{\frac{6}{5}}(\{x:|c|\geq 1+\delta\})}+|(c)_\Omega|\Vert\rho\Vert_{L^{\frac{6}{5}}(\{x:|c|\geq 1+\delta\})}\right)\\
&\leq C+C\Vert\rho\Vert_{L^{\frac{6}{5}}}^2\\
&\quad+C\Vert\na\mu\Vert_{L^2}\left(\Vert \na c\Vert_{L^2}\Vert\rho\Vert_{L^{\frac{3}{2}}(\{x:|c|\geq 1+\delta\})} +\Vert\rho\Vert_{L^{\frac{6}{5}}}\Vert\na c\Vert_{L^2}\Vert\rho\Vert_{L^{\frac{6}{5}}(\{x:|c|\geq 1+\delta\})}\right)\\
&\leq C+\frac{1}{2}\Vert\na c\Vert_{L^2}^2+C\Vert\rho\Vert_{L^{\frac{6}{5}}}^2\left(1+\Vert\rho\Vert_{L^{\frac{3}{2}}(\{x:|c|\geq 1+\delta\})}^2+\Vert\rho\Vert_{L^{\frac{6}{5}}}^2\Vert\rho\Vert_{L^{\frac{3}{2}}(\{x:|c|\geq 1+\delta\})}\right).
\end{aligned}
\end{equation}
With  \eqref{eq20} and \eqref{eq229} in hand,   we estimate the last  integrals in  \eqref{eqsh11} as  
\begin{equation}\label{eqrhomuc1}
\begin{aligned}
\left|\int\rho\mu c\right|&=\left|\int\rho c(\mu-(\mu)_\Omega)+m_2(\mu)_\Omega\right|\\
&\leq C+\frac{1}{2}\Vert\na c\Vert_{L^2}^2+\frac{|m_2|}{mm_1}\int_{\{x:|c|>m\}}\rho(f_2^{\delta'}-\theta_cc)c\\
&\quad+C\Vert\rho\Vert_{L^{\frac{6}{5}}}^2\left(1+\Vert\rho\Vert_{L^{\frac{3}{2}}(\{x:|c|\geq 1+\delta\})}^2+\Vert\rho\Vert_{L^{\frac{6}{5}}}^2\Vert\rho\Vert_{L^{\frac{3}{2}}(\{x:|c|\geq 1+\delta\})}\right).
\end{aligned}
\end{equation}

Next, we shall deal with   $\Vert\rho\Vert_{L^{\frac{3}{2}}(\{x:|c|\geq 1+\delta\})}$ and bound the last two terms    in \eqref{eqrhomuc1}.   
Note that the definition of $f_2^{\delta}$ implies   that if  $c>1+\delta$, then
\[f_2^{\delta'}-\theta_cc=-(1+\delta)\theta+\frac{\theta_c\delta}{2}+\frac{3\theta_0}{2(2-\delta)}+\frac{\theta_0}{2}\ln\left(\frac{2-\delta}{\delta}\right)=O\left(\ln\frac{1}{\delta}\right)\quad {\rm as}\,\,\delta\to 0.\]
Thus, \[\Vert\rho\Vert_{L^1(\{x:|c|\geq 1+\delta\})}\leq (1+\de)^{-1}\left(\ln\frac{1}{\delta}\right)^{-1}\Vert\rho(f_{2}^{\delta'}-\theta_cc)c\Vert_{L^1(\{x:|c|\geq 1+\delta\})}.\]
Consequently,  by the interpolation inequality, 
\bnn\label{eq2299}
\Vert\rho\Vert_{L^{\frac{3}{2}}(\{x:|c|\geq 1+\delta\})}^2\leq C\left(\ln\frac{1}{\delta}\right)^{-\frac{1}{4}}\Vert\rho(f_2^{\delta'}-\theta_cc)c\Vert_{L^1(\{x:|c|\geq 1+\delta\})}^{\frac{1}{4}}\Vert\rho\Vert_{L^{\frac{21}{13}}}^{\frac{7}{4}}.
\enn
Then one further deduces that 
\begin{equation}\label{eq230}
\begin{aligned}
&C\Vert\rho\Vert_{L^{\frac{6}{5}}}^2\Vert\rho\Vert_{L^{\frac{3}{2}}(\{x:|c|\geq 1+\delta\})}^2\\
&\leq C\left(\ln\frac{1}{\delta}\right)^{-\frac{1}{4}}\Vert\rho\Vert_{L^{\frac{6}{5}}}^2\Vert\rho(f_2^{\delta'}-\theta_cc)\Vert_{L^1(\{x:|c|\geq 1+\delta\})}^{\frac{1}{4}}\Vert\rho\Vert_{L^{\frac{21}{13}}}^{\frac{7}{4}}\\
&\leq \frac{1}{8}\left(1-\frac{|m_2|}{mm_1}\right)\Vert\rho (f_2'-\theta_cc)c\Vert_{L^1}+C\left(\ln\frac{1}{\delta}\right)^{-\frac{1}{3}}\Vert\rho\Vert_{L^4}^{\frac{16}{9}}+C.
\end{aligned}
\end{equation}
Similarly,   
\begin{equation}\label{eq231}
\begin{aligned}
&C\Vert\rho\Vert_{L^{\frac{3}{2}}}^2\Vert\rho\Vert_{L^{\frac{3}{2}}(\{x:|c|\geq 1+\delta\})}\\
&\leq \frac{1}{8}\left(1-\frac{|m_2|}{mm_1}\right)\Vert\rho (f_2'-\theta_cc)c\Vert_{L^1}+C\left(\ln\frac{1}{\delta}\right)^{-1}\Vert\rho\Vert_{L^{3}}^3+C.
\end{aligned}
\end{equation}
The combination of   \eqref{eq230} with   \eqref{eq231} guarantees that   \eqref{eqrhomuc1} satisfies
\begin{equation}\label{eqrhomuc2}
\begin{aligned}
\left|\int\rho\mu c\right|\leq&C+\frac{|m_2|}{mm_1}\int_{\{x:|c|>m\}}\rho(f_2'-\theta_cc)c+\frac{1}{2}\Vert\na c\Vert_{L^2}^2+C\Vert\rho\Vert_{L^{\frac{6}{5}}}^{2}\\
&+\frac{1}{4}\left(1-\frac{|m_2|}{mm_1}\right)\Vert\rho (f_2'-\theta_cc)c\Vert_{L^1}+C\left(\ln\frac{1}{\delta}\right)^{-\frac{1}{3}}\Vert\rho\Vert_{L^4}^{\frac{8}{3}}.
\end{aligned}
\end{equation}
Notice that
$$\int_{\{x:|c|\le m\}}\rho\left|(f_2'-\theta_cc)c\right|\le C\int_{\{x:|c|\le m\}}\rho \le Cm_{1}$$
and 
$$\frac{|m_2|}{mm_1}+\frac{1}{4}\left(1-\frac{|m_2|}{mm_1}\right)= \frac{1}{4}\left(1+3\frac{|m_2|}{mm_1}\right)<1,$$
we then substitute \eqref{eqrhomuc2} into \eqref{eqsh11}  and conclude 
\begin{equation}\label{eqsh1}
\begin{aligned}
\int\rho|(f_2^{\delta'}-\theta_cc)c|+\int|\na c|^2\leq C+C\Vert\rho\Vert_{L^{\frac{6}{5}}}^2+C\left(\ln\frac{1}{\delta}\right)^{-\frac{1}{3}}\Vert\rho\Vert_{L^4}^{\frac{8}{3}}.
\end{aligned}
\end{equation}

{\it Step 2.} The same deduction of \eqref{eq20}  gives \begin{equation}
\begin{aligned}
|(\mu)_\Omega| \leq C+C\int\rho|(f_2^{\delta'}-\theta_cc)c|+C\Vert\rho\Vert_{L^{\frac{6}{5}}}^2,
\end{aligned}
\end{equation}
and hence,
\be\la{tty}
\Vert\mu\Vert_{L^6}\leq C+C\Vert\na\mu\Vert_{L^2}+C\int\rho|f_2^{\delta'}-\theta_cc)|c+C\Vert\rho\Vert_{L^{\frac{6}{5}}}^2.
\ee
Then,  it follows from  \eqref{eq2229},  \eqref{eqsh1}, \eqref{tty} that
\begin{eqnarray}\label{eqsh13}
\Vert\mu\Vert_{W^{1,2}}^2\leq C+C\Vert\rho\Vert_{L^{\frac{6}{5}}}^4+C\left(\ln\frac{1}{\delta}\right)^{-\frac{2}{3}}\Vert\rho\Vert_{L^4}^{\frac{16}{3}}.
\end{eqnarray}

Multiplying $\eqref{eq3.1}_{4}$ by $\frac{\p f^{\de}}{\p c}=f_2^{\delta'}-\theta_cc$ yields
\bnn\ba
\int\rho(f_2^{\delta'}-\theta_cc)^2+\int(f_2^{\delta''}-\theta_c)|\na c|^2&\leq \int\rho\mu^2\le 
 \Vert\rho\Vert_{L^{\frac{3}{2}}}\Vert\mu\Vert_{L^6}^2.
\ea\enn This together  with \eqref{eqsh13} provide us
\begin{equation}\label{eqsh15}
\begin{aligned}
&\int\rho(f_2^{\delta'}-\theta_cc)^2+\int\rho\mu^2 +\int|f_2^{\delta''}-\theta_c||\na c|^2\leq   C+C\Vert\rho\Vert_{L^{\frac{3}{2}}}^3+C\left(\ln\frac{1}{\delta}\right)^{-\frac{2}{3}}\Vert\rho\Vert_{L^4}^{\frac{52}{9}}.
\end{aligned}
\end{equation}

{\it Step 3.} We claim that   \begin{eqnarray}\label{rho_4}
\left\Vert\left(\ln\frac{1}{\delta}\right)^{-1}\rho^{11}+\rho^2\frac{\partial f^{\de}}{\partial\rho}\right\Vert_{L^s}\leq C.
\end{eqnarray}  
In fact,  let   the operator $\mathcal{B}$ be as in \eqref{eq2.36} and  $h\in L^{\frac{s}{s-1}}$ will be specified later. Then,  for  $1<s<\frac{3}{2}$,  it holds  that \begin{eqnarray*}
\Vert h\Vert_{L^2}+\Vert\mathcal{B}(h-(h)_\Omega)\Vert_{L^\infty}+\Vert\na\mathcal{B}(h-(h)_\Omega)\Vert_{L^2}+\Vert\na\mathcal{B}(h-(h)_\Omega)\Vert_{L^{\frac{s}{s-1}}}\leq C\Vert h\Vert_{L^{\frac{s}{s-1}}}.
\end{eqnarray*}
Multiplying    $\eqref{eq3.1}_2$  by  $\mathcal{B}(h-(h)_\Omega)$, we deduce
\bnn\begin{aligned}
&\int\left(\left(\ln\frac{1}{\delta}\right)^{-1}\rho^{11}+\rho^2\frac{\partial f^{\de}}{\partial \rho}\right)h\\
& = (h)_\Omega\int\left(\left(\ln\frac{1}{\delta}\right)^{-1}\rho^{11}+\rho^2\frac{\partial f^{\de}}{\partial \rho}\right)-\int(\rho \mathbf{g}_1+\mathbf{g}_2)\cdot\mathcal{B}(h-(h)_\Omega)+\int\mathbb{S}_{ns}:\na\mathcal{B}(h-(h)_\Omega)\\
&\quad -\int\rho\u\otimes\u:\na\mathcal{B}(h-(h)_\Omega)+\int\mathbb{S}_c:\na\mathcal{B}(h-(h)_\Omega)\\
& \leq C\Vert h\Vert_{L^{\frac{s}{s-1}}}\left(1+\Vert\left(\ln\frac{1}{\delta}\right)^{-1}\rho^{11}+\rho^2\frac{\partial f^{\de}}{\partial \rho}\Vert_{L^1}+\Vert\na\u\Vert_{L^2}+\Vert\rho|\u|^2\Vert_{L^s}+\Vert\na c\Vert_{L^{2s}}^2\right).
\end{aligned}\enn
Let us choose 
   \[h=\left(\frac{|\left(\ln\frac{1}{\delta}\right)^{-1}\rho^{11}+\rho^2\frac{\partial f^{\de}}{\partial \rho}|}{\Vert\left(\ln\frac{1}{\delta}\right)^{-1}\rho^{11}+\rho^2\frac{\partial f^{\de}}{\partial\rho}\Vert_{L^s}}\right)^{s-1},\]  then  
\be\la{ls34x}\begin{aligned}
&\Vert\left(\ln\frac{1}{\delta}\right)^{-1}\rho^{11}+\rho^2\frac{\partial f^{\de}}{\partial\rho}\Vert_{L^s}\leq C\left(1+\Vert\na\u\Vert_{L^2}+\Vert\rho|\u|^2\Vert_{L^s}+\Vert\na c\Vert_{L^{2s}}^2\right).
\end{aligned}
\ee 

We now discuss \eqref{ls34x}  in  two cases:
 \begin{enumerate}
\item 
 $\na\times \mathbf{g}_1=0.$  The energy inequality shows
\[\Vert\na\u\Vert_{L^2}^2+\Vert\na\mu\Vert_{L^2}^2\leq C.\]
Then, for all $\ga>\frac{3}{2}$,  we have the following
\bnn\ba \Vert\rho|\u|^2\Vert_{L^s}&\leq C\Vert\rho\Vert_{L^{\frac{3s}{3-s}}}\Vert\u\Vert_{L^{6}}^{2}\\
&\leq C\Vert\rho\Vert_{L^{\gamma s}}\leq C+\frac{1}{4}\Vert \rho^{\ga}\Vert_{L^s},\quad s\in (1,3-3/\ga].\ea\enn
\item $\na\times \mathbf{g}_1\neq 0$.  For all $\ga>\frac{5}{3}$,  we use    \eqref{eq2229}, \eqref{eq1.8},  and  the interpolation between $L^{1}$ and $L^{\ga s}$,   to deduce 
\bnn\ba \Vert\rho\u^2\Vert_{L^s}&\leq C\Vert\rho\Vert_{L^{\frac{3s}{3-s}}}\Vert\u\Vert_{L^{6}}^{2}\\
&\le C\Vert\rho\Vert_{L^{\frac{3s}{3-s}}}\Vert\rho\Vert_{L^{\frac{3}{2}}}\\
&\leq C\Vert\rho\Vert_{L^{\gamma s}}^{\frac{5}{3}-\frac{3\gamma-5}{3(\gamma s-1)}}\leq C+\frac{1}{4}\Vert \rho^{\ga}\Vert_{L^s},\quad s\in (1,3-3/\ga].\ea\enn
\end{enumerate}
In this connection,  under the condition \eqref{eq1.11}, it holds that
 \be\la{ls79}\ba
  C\left(\Vert\na\u \Vert_{L^{2}}+\Vert\rho |\u|^2 \Vert_{L^{s}}\right)& \leq  C  +\frac{1}{4} \Vert\rho^{\ga}\Vert_{L^{s}}\\
  &\le C+\frac{1}{4}\Vert\left(\ln\frac{1}{\delta}\right)^{-1}\rho^{11}+\rho^2\frac{\partial f^{\de}}{\partial\rho}\Vert_{L^s}.\ea\ee

It remains to bound $\|\na c\|_{L2s}^{2}.$  It follows   from $\eqref{eq3.1}_{4}$ and \eqref{eqsh15}  that
\begin{equation}\label{eqsh18}
\begin{aligned}
\Vert\Delta c\Vert_{L^{\frac{2\gamma}{1+\gamma}}}&\leq C\Vert\rho\mu\Vert_{L^{\frac{2\gamma}{1+\gamma}}}+C\Vert\rho(f_2^{\delta'}-\theta_cc)\Vert_{L^{\frac{2\gamma}{1+\gamma}}}\\
& \leq C\left(\Vert\sqrt{\rho}\mu\Vert_{L^2}+\Vert\sqrt{\rho} (f_2^{\delta'}-\theta_cc)  \Vert_{L^2}\right)\Vert\rho\Vert_{L^{\gamma}}^{\frac{1}{2}}\\
&\leq  C+C\Vert\rho\Vert_{L^{\gamma}}+C\Vert\rho\Vert_{L^{\frac{3}{2}}}^3+C\left(\ln\frac{1}{\delta}\right)^{-\frac{2}{3}}\Vert\rho\Vert_{L^4}^{\frac{52}{9}}.
\end{aligned}
\end{equation}
Using this and \eqref{eqsh1}, we perform the calculation.  
\begin{equation}\label{eqsh16}
\begin{aligned}
\Vert\na c\Vert_{L^{2s}}^2&\leq  \Vert\na c\Vert_{L^2}^{2-\frac{3\gamma(s-1)}{2\gamma s-3s}}\Vert\na c\Vert_{L^{\frac{6\gamma}{3+\gamma}}}^{\frac{6\gamma}{2\gamma-3}(1-\frac{1}{s})}\\
&\leq C\Vert\na c\Vert_{L^2}^{2-\frac{3\gamma(s-1)}{2\gamma s-3s}}\Vert\Delta c\Vert_{L^{\frac{2\gamma}{1+\gamma}}}^{\frac{6\gamma}{2\gamma-3}(1-\frac{1}{s})}\\
&\leq C\left(1+\Vert\rho\Vert_{L^{\frac{6}{5}}}^2+\left(\ln\frac{1}{\delta}\right)^{-1}\Vert\rho\Vert_{L^8}^8\right)^{1-\frac{3\gamma(s-1)}{4\gamma s-6s}}\Vert\Delta c\Vert_{L^{\frac{2\gamma}{1+\gamma}}}^{\frac{6\gamma}{2\gamma-3}(1-\frac{1}{s})}.
\end{aligned}
\end{equation}
Combining  \eqref{eqsh16} with \eqref{eqsh18},  we arrive at  
\begin{equation}\la{tty1}
\begin{aligned}
&C\Vert\na c\Vert_{L^{2s}}^2\\
&\leq  C\left(1+\Vert\rho\Vert_{L^{\frac{6}{5}}}^2+\left(\ln\frac{1}{\delta}\right)^{-1}\Vert\rho\Vert_{L^8}^8\right)\left(1+\Vert\rho\Vert_{L^{\gamma}}+\Vert\rho\Vert_{L^{\frac{3}{2}}}^3+\left(\ln\frac{1}{\delta}\right)^{-\frac{2}{3}}\Vert\rho\Vert_{L^4}^{\frac{52}{9}}\right)^{{\frac{6\gamma}{2\gamma-3}(1-\frac{1}{s})}}\\
  &\le C+\frac{1}{4}\Vert\left(\ln\frac{1}{\delta}\right)^{-1}\rho^{11}+\rho^2\frac{\partial f^{\de}}{\partial\rho}\Vert_{L^s},
\end{aligned}
\end{equation}
where,  in the last inequality,  we have chosen  $s$ close to  $1$ so that   ${\frac{6\gamma}{2\gamma-3}(1-\frac{1}{s})}$ is sufficiently small.  The desired \eqref{rho_4} thus follows from  \eqref{tty1},  \eqref{ls79},   \eqref{ls34x}. 

Therefore, in light of  \eqref{rho_4},    \eqref{eq2229},  \eqref{eqsh13}, \eqref{eqsh1}, \eqref{eqccc1},   \eqref{eqsh18}, we complete the proof of Proposition \ref{pro4}.  \end{proof}

\subsection{$\delta$-limit for the solutions}

In view of Proposition  \ref{pro4}, we are allowed to take  the  zero  $\de$-limits for solutions of \eqref{eq3.1} and \eqref{eq3.1.2}, upon  to some subsequences,
\begin{eqnarray}\label{eq4.49}
(\na\u^\delta,\na\mu^\delta)-\!\!\!\rightharpoonup(\na\u,\na\mu)\quad {\rm in}\,\,L^2,
\end{eqnarray}
\begin{eqnarray}\label{eq4.50}
(\u^\delta,\mu^\delta)\longrightarrow(\u,\mu)\quad {\rm in}\,\, L^{p_1}\,\,(p_1<6),
\quad 
c^\delta\longrightarrow c\quad {\rm in}\,\, W^{1,2},
\end{eqnarray}
\begin{eqnarray}\label{eq4.52}
\left(\ln\frac{1}{\delta}\right)^{-1}(\rho^\delta)^{11}\longrightarrow 0\quad {\rm in}\,\, L^1\quad {\rm and}\quad (\rho^\delta)^{\ga} -\!\!\!\rightharpoonup \overline{\rho^{\ga}}\quad{\rm in}\,\, L^{s}\,\,(s>1).
\end{eqnarray}
 As a result of \eqref{eq4.50} and \eqref{eq4.52},
\begin{eqnarray}\label{eq4.53}
(\rho^\delta\u^\delta,\rho^\delta\mu^\delta) -\!\!\!\rightharpoonup(\rho\u,\rho\mu)\quad{\rm in}\,\, L^{p_2}\quad({\rm for}\,\, {\rm some}\,\, p_2<6/5),
\end{eqnarray}
\begin{eqnarray}\label{eq4.54}
(\rho^\delta\u^\delta\otimes\u^\delta,\rho^\delta\u^\delta c^\delta) -\!\!\!\rightharpoonup(\rho\u\otimes\u,\rho\u c)\quad {\rm in}\,\, L^{p_3}\quad ({\rm for}\,\, {\rm some}\,\, p_3>1),
\end{eqnarray}
and furthermore,
\begin{eqnarray}\label{eq4.55}
(\rho^\delta)^2\frac{\partial f^\delta}{\partial \rho^\delta}=(\gamma-1)(\rho^\delta)^\gamma+\rho^\delta H-\!\!\!\rightharpoonup (\gamma-1)\overline{\rho^\gamma}+\rho H=\overline{\rho^2\frac{\partial f}{\partial\rho}}\quad {\rm in}\,\, L^{s},
\end{eqnarray}
\begin{eqnarray}\label{eq4.56}
\rho^\delta\frac{\partial f^\delta}{\partial c^\delta}= \rho^\delta (f_{2}^{\delta'}(c^\delta)-\theta_c c^\delta) -\!\!\!\rightharpoonup\overline{\rho\frac{\partial f}{\partial c}}\quad {\rm in}\,\, L^{{\frac{6s}{3+2s}}}.
\end{eqnarray}
Having \eqref{eq4.49}-\eqref{eq4.56} in hand, we are allowed to  take   $\delta$ limit  in \eqref{eq3.1} and obtain the following equations:
 \begin{equation}\label{eq4.57}
\left\{
\begin{aligned}
&{\rm div}(\rho\u)=0,\\
&{\rm div}(\rho\u\otimes\u)+\na\left(\overline{\rho^2\frac{\partial f}{\partial \rho}}\right)={\rm div}(\mathbb{S}_{ns}+\mathbb{S}_c)+\rho \mathbf{g}_1+\mathbf{g}_2,\\
&{\rm div}(\rho\u c)=\Delta\mu,\\
&\rho\mu=\overline{\rho\frac{\partial f}{\partial c}}-\Delta c,\\
&\int \rho =m_{1},\quad \int \rho c=m_{2}.
\end{aligned}\right.
\end{equation}
Moreover,
\[\rho\in L^{\gamma s}(\Omega),\,\,\u\in H_0^1(\Omega),\,\, \mu\in H_{n}^1(\Omega),\,\, c\in W^{2,\frac{6s}{5}}(\om),\]
and the energy inequality \eqref{ls40}  is valid.
 
\bigskip

\section{Proof of Theorem \ref{t}}

We  now prove   that the limit functions  $(\rho, \u,\mu,c)$  are  weak solutions to the  equations \eqref{eq1.1} in the sense of Definition \ref{de}. 
From \eqref{eq4.57},    it suffices to show that   \begin{eqnarray}\label{eq5.58}
\overline{\rho^2\frac{\partial f}{\partial\rho}}=\rho^2\frac{\partial f}{\partial\rho}\quad {\rm and}\quad\overline{\rho\frac{\partial f}{\partial c}}=\rho\frac{\partial f}{\partial c}\quad {\rm in}\,\,\,\,\mathcal{D}'(\om).
\end{eqnarray} 
\begin{lemma}\la{lem5.7}The density has strong convergence, i.e., 
\be\la{ls52}  \rho^{\de} \longrightarrow \rho\quad {\rm in}\,\,\,L^{1}.\ee
\end{lemma}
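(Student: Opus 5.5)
We follow the scheme used for the $\epsilon$-limit in Section 3, now with the weaker integrability $\rho^\delta$ bounded in $L^{\gamma s}$ from Proposition \ref{pro4}, and with a strictly monotone limiting pressure $(\gamma-1)\rho^\gamma+H\rho$.

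\emph{Step 1 (renormalization of the limit).} By \eqref{eq4.57}$_1$ we have ${\rm div}(\rho\u)=0$ with $\rho\in L^{\gamma s}$ and $\u\in H^1_0$. Since $\gamma s$ may lie below $2$, the DiPerna--Lions commutator lemma does not apply directly. We therefore pass through the oscillation defect measure, as in \cite{FNP,NS}: we show that
\[
\sup_k\limsup_{\delta\to0}\|T_k(\rho^\delta)-T_k(\rho)\|_{L^{\gamma+1}}<\infty ,
\]
where $T_k$ is the standard truncation. This makes the limit $(\rho,\u)$, extended by zero, a renormalized solution. Taking $b=L_k(\rho)=\rho\int_1^\rho T_k(z)/z^2\,dz$ and letting $k\to\infty$, we obtain the renormalized formulation in Definition \ref{de}. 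We also get
\[
\int \overline{T_k(\rho)}\,{\rm div}\u\to 0 \quad\text{and}\quad \int T_k(\rho^\delta){\rm div}\u^\delta=0
\]
for the approximations, since each $(\rho^\delta,\u^\delta)$ is itself renormalized by Theorem \ref{thm3.1}.

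\emph{Step 2 (effective viscous flux).} We prove the analogue of Lemma \ref{lem3.3} with $\rho^\delta$ replaced by $T_k(\rho^\delta)$:
\[
\lim_{\delta\to0}\int\phi\,T_k(\rho^\delta)\Big(P_\delta(\rho^\delta)-(2\lambda_1+\lambda_2){\rm div}\u^\delta\Big)=\int\phi\,\overline{T_k(\rho)}\Big(\overline{P}-(2\lambda_1+\lambda_2){\rm div}\u\Big).
\]
Here $P_\delta=(\ln\frac1\delta)^{-1}\rho^{11}+(\gamma-1)\rho^\gamma+H\rho$, and the $\rho^{11}$ part vanishes in $L^1$ by \eqref{eq4.52}. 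The method is the usual one. We test \eqref{eq3.1}$_2$ with $\phi\nabla\Delta^{-1}(\phi T_k(\rho^\delta))$ and test \eqref{eq4.57}$_2$ with the analogous limit field. The convective terms are handled by the div--curl lemma and the commutator estimate.

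The new ingredient is the capillary term ${\rm div}\,\mathbb S_c$. Since $c^\delta\to c$ strongly in $W^{1,2}$ by \eqref{eq4.50}, and $\nabla c^\delta$ is bounded in $L^{2s}$ through $W^{2,6s/5}$, we get $\nabla c^\delta\otimes\nabla c^\delta\to\nabla c\otimes\nabla c$ strongly in $L^{q}$ for some $q>1$. Its pairing with $\nabla\nabla\Delta^{-1}(\phi T_k(\rho^\delta))$ then passes to the limit. Because $H$ is constant, the pressure does not depend on $c$, so no cross term between $c$ and $\rho$ appears. This is exactly where hypothesis \eqref{eq1.12} is used.

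\emph{Step 3 (conclusion).} We combine Steps 1 and 2 with the monotonicity of $z\mapsto(\gamma-1)z^\gamma+Hz$, in the standard way:
\[
(2\lambda_1+\lambda_2)\limsup_{\delta\to0}\int\big(T_k(\rho^\delta)-T_k(\rho)\big)^{\gamma+1}\le C\int\big(\overline{T_k(\rho)}-T_k(\rho)\big)\,{\rm div}\u .
\]
The right-hand side tends to $0$ as $k\to\infty$, using the bound from Step 1 and \eqref{eq4.52}. It follows that $\overline{L_k(\rho)}=L_k(\rho)$ in the limit. Alternatively, the argument of \eqref{eq3.25} applied to $\overline{\rho\ln\rho}$ gives $\overline{\rho\ln\rho}=\rho\ln\rho$, and strict convexity then yields \eqref{ls52}.

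\emph{Main obstacle.} The main difficulty is Step 1: the oscillation defect bound when $\gamma s<2$. Here the truncations and the weak-limit identity must be combined carefully. We will use the uniform bound on $\rho^\gamma$ in $L^s$ from Proposition \ref{pro4} together with the flux identity, applied with $T_k$ in place of $\rho$.
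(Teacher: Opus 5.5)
Your argument is correct and has the same skeleton as the paper's proof:
\begin{itemize}
\item the effective-viscous-flux identity tested against $T_k(\rho^\delta)$ (the paper's \eqref{eq4.60});
\item monotonicity of $(\gamma-1)z^\gamma+Hz$ together with concavity of $T_k$, to produce $(\gamma-1)\int|T_k(\rho^\delta)-T_k(\rho)|^{\gamma+1}$;
\item the identity $\int T_k(\rho^\delta)\,{\rm div}\,\mathbf{u}^\delta=0$ for the renormalized approximations;
\item finally $k\to\infty$.
\end{itemize}

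The difference is in how you close the argument. You first extract from the flux identity a $k$-uniform bound on the oscillation defect in $L^{\gamma+1}$. Since $\frac{1}{\gamma+1}+\frac12<1$ matches $\nabla\mathbf{u}\in L^2$, the oscillation-defect criterion makes the limit renormalized, which gives $\int T_k(\rho)\,{\rm div}\,\mathbf{u}=0$. You then let $k\to\infty$ by interpolating $\|T_k(\rho)-\overline{T_k(\rho)}\|_{L^2}$ between $L^1$, which is small by the $L^{\gamma s}$ bound, and $L^{\gamma+1}$, which is bounded by the defect estimate.

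The paper does neither of these two things.
\begin{itemize}
\item In \eqref{eq4.65} it identifies $\int\overline{T_k(\rho)}\,{\rm div}\,\mathbf{u}$ with $\lim_{\delta}\int T_k(\rho^\delta)\,{\rm div}\,\mathbf{u}^\delta$. That is a limit of products of two merely weakly convergent sequences.
\item The display after \eqref{eq4.65} tacitly uses $\int T_k(\rho)\,{\rm div}\,\mathbf{u}=0$, i.e.\ renormalization of the limit, which is never established.
\item \eqref{eq4.68} invokes a uniform $L^2$ bound on $\rho^\delta$, but Proposition~\ref{pro4} only gives $L^{\gamma s}$. Since $s\le 3-3/\gamma$, $\gamma s\le 2$ is forced when $\gamma\le 5/3$, and it also occurs for every $\gamma<2$ once $s$ is taken close to $1$, as that proof does.
\end{itemize}
So your route costs one extra standard lemma, but it is the one that covers the whole range $\gamma>3/2$. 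It also delivers the renormalized continuity equation required in Definition~\ref{de}.

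Two small corrections.
\begin{itemize}
\item The Step~3 inequality should read
\[
(\gamma-1)\limsup_{\delta\to0}\int|T_k(\rho^\delta)-T_k(\rho)|^{\gamma+1}\le(2\lambda_1+\lambda_2)\int\big(T_k(\rho)-\overline{T_k(\rho)}\big)\,{\rm div}\,\mathbf{u}.
\]
Then \eqref{ls52} follows at once from $\|\rho^\delta-\rho\|_{L^1}\le\|\rho^\delta-T_k(\rho^\delta)\|_{L^1}+\|T_k(\rho^\delta)-T_k(\rho)\|_{L^1}+\|T_k(\rho)-\rho\|_{L^1}$. There is no need to pass through $\overline{L_k(\rho)}=L_k(\rho)$.
\item Drop the alternative via \eqref{eq3.25}. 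That argument tests with $\nabla\Delta^{-1}\rho^\delta$ itself, so it needs $\rho^\delta$ bounded in $L^{\gamma+1}$ and $\rho\,{\rm div}\,\mathbf{u}\in L^1$. This is exactly what fails in the regime that motivated your Step~1.
\end{itemize}
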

\begin{proof} The  proof  is available  in the Appendix.   \end{proof}
 
The first  part in \eqref{eq5.58} follows directly from Lemma \ref{lem5.7}. 
Now we are in a position to prove the second part in \eqref{eq5.58}.
It follows  from     \eqref{eqsh15} and \eqref{rho_4} that 
\be\ba\la{ls66}
&\int\rho^{\de}\left(f_{2}^{\delta '}-\theta_cc\right)^2+\int\left|f_{2}^{\delta ''}-\theta_c\right||\na c|^2\leq C,
\ea\ee
and consequently,
 \be\la{ls64}
\int_{\{x:\,c^{\de}>1\}}\rho^{\de}|f_{2}^{\delta '}-\theta_cc^{\de}|^2\leq \int_\Omega\rho^{\de}|f_{2}^{\delta '}-\theta_cc^{\de}|^2\leq C.
\ee
The definition of   $f_{2}^{\delta}$ guarantees that, for some $C=C(\theta_{0},\theta_{c})$ independent of $\de,$  
\be\la{ls65} |f_{2}^{\delta '}-\theta_cc^{\de}|^2\ge C\ln(2/\de-1)\quad {\rm in}\,\,\,\{x\in \om:\,\,|c(x)|>1\}.\ee
Thanks to  \eqref{ls64} and  \eqref{ls65}, we get 
\[\int_{\{x:|c^\delta|>1\}}\rho^\delta \d x\leq \frac{C}{\ln(2/\delta-1)}.\]
By this  we use   Fatou's lemma and deduce  \begin{eqnarray*}
\int\rho \textbf{1}_{\{x:\,|c|>1\}} \leq \lim\inf_{\delta\to 0} \int \rho^\delta \textbf{1}_{\{x:\,|c^{\de}|>1\}} \leq \lim_{\delta\to 0} \frac{C}{\ln(2/\delta-1)}= 0,
\end{eqnarray*}
which  implies  \be\la{ls60} |c|\le 1\quad a.e.\,\,\,\, {\rm in}\,\,\,\, \{x\in\Omega:\,\,\rho(x)>0\}.\ee

We further claim  that 
 \begin{eqnarray}\label{eq4.71}
-1<c(x)<1\quad  a.e.\,\,\,{\rm in}\,\,\,\{x\in \om:\,\,\rho(x)>0\}.
\end{eqnarray}
In fact,   if   $c^{\de}$ converges to $c=\pm1$, then
\be\la{ls62}\lim_{\de\rightarrow0}  f_{2}^{\delta '}(c^\delta)= \pm\infty.\ee 
While if   $c^{\de}$ converges to  some  $c\in (-1,1)$,  we see that   $$c^{\de}\in [c-\ti{\de},c+\ti{\de}] \subset [-1+\ti{\de},\,1-\ti{\de}]\subset (-1,1),$$ provided   that  $\de< \ti{\de}:=\min 
\left\{\frac{1-c}{2},\,\frac{1+c}{2}\right\}.$
  Hence, using  the definition of $f_{2}^{\delta '}(c^\delta)$ once more, we have
 \be\la{ls61} \begin{aligned} &\lim_{\de\rightarrow0} \left| f_{2}^{\delta '}(c^\delta)-f_2'(c)\right|\\
 &= \lim_{\de\rightarrow0} \left| \left( f_{2}^{\delta '}(c^\delta)-f_2'(c^{\de})\right) +\left( f_2'(c^\delta)-f_2'(c)\right)\right|\\
&=\lim_{\de\rightarrow0} \left| f_2'(c^\delta)-f_2'(c)\right|\\
&= 0,\end{aligned}\ee
where the second equality  is due to  $f_{2}^{\delta '}(c^\delta)=f_2'(c^{\de})$ as   $c^{\de}\in  [-1+\ti{\de},\,1-\ti{\de}]$.

In view of  \eqref{ls62} and  \eqref{ls61},  if  we define  
\begin{eqnarray*}
\tilde{f_{2}'}(s):=\left\{\begin{aligned}
&f_2'(s),&& {\rm if}\,\, s\in(-1,1),\\
&\pm\infty,&& {\rm if}\,\, s=\pm1,
\end{aligned}\right.
\end{eqnarray*}
 then  we deduce from  \eqref{ls52}, \eqref{ls64}, \eqref{ls62},   \eqref{ls61} that 
\bnn\ba
\int_\Omega\rho|\tilde{f_{2}'}(c)-\theta_{c} c|^2\textbf{1}_{\{x:\,\rho>0\}}&\leq\lim\inf_{\delta\to 0}\int_\Omega\rho^\delta|f_{2}^{\delta '}(c^\delta)-\theta_cc^\delta|^2\textbf{1}_{\{x:\,\rho>0\}}\\
&\leq \lim\inf_{\delta\to 0}\int_\Omega\rho^\delta|f_2^{\delta'}(c^\delta)-\theta_cc^\delta|^2 \\
&\leq  C.
\ea\enn
This proves \eqref{eq4.71}.  
\begin{remark}  By  \eqref{eq4.71},  we see that  the function $\rho (f_2'(c)-\theta_cc)$ is well defined   on the set $\{x\in \om:\,\rho(x)>0\}.$ \end{remark}

For any $\phi\in C^\infty(\overline{\Omega})$, we write 
\be\ba\label{eq4.73}
&\int\rho^\delta \left(f_{2}^{\delta '}(c^\delta)-\theta_cc^\delta\right)\phi\\
&=\int_{\{x:\,\rho=0\}}\rho^\delta  \left(f_{2}^{\delta '}(c^\delta)-\theta_cc^\delta\right)\phi +\int_{\{x:\,\rho>0\}}\rho^\delta  \left(f_{2}^{\delta '}(c^\delta)-\theta_cc^\delta\right)\phi.
\ea\ee 

On the  one hand,  from  \eqref{ls66}   we have 
\bnn\begin{aligned}
& \left|\int_{\{x:\rho=0\}}\rho^\delta  \left(f_{2}^{\delta '}(c^\delta)-\theta_cc^\delta\right)\phi\right|\\
&=\left|\int_{\{x:\,\rho=0\}}\sqrt{\rho^\delta}\sqrt{\rho^\delta} \left(f_{2}^{\delta '}(c^\delta)-\theta_cc^\delta\right)\phi\right|\\
&\leq C \Vert\sqrt{\rho^\delta}\Vert_{L^2(\{x:\,\rho=0\})}\Vert\sqrt{\rho^\delta}  \left(f_{2}^{\delta '}(c^\delta)-\theta_cc^\delta\right)\Vert_{L^2(\Omega)}\\
&\le C \Vert\sqrt{\rho^\delta}\Vert_{L^2(\{x:\,\rho=0\})},
\end{aligned}\enn
which along with the strong convergence of $\rho^{\de}$ yields
\bnn  \lim_{\de\rightarrow0}\left|\int_{\{x:\rho=0\}}\rho^\delta  \left(f_{2}^{\delta '}(c^\delta)-\theta_cc^\delta\right)\phi\right|=0.\enn
On the other hand,  by  \eqref{eq4.71}, \eqref{ls61},
 the strong convergence of $\rho^{\de}$ and $c^{\de}$,  we have 
\bnn\ba \lim_{\de\rightarrow0}\int_{\{x:\,\rho>0\}}\rho^\delta  (f_{2}^{\delta '}(c^\delta)-\theta_cc^\delta)\phi= \int_{\{x:\,\rho>0\}}\rho (f_{2}' (c)-\theta_cc)\phi .\ea\enn
Hence,  we can pass the limit in \eqref{eq4.73} and conclude 
\bnn\ba \lim_{\de\rightarrow0}\int\rho^\delta  \left(f_{2}^{\delta '}(c^\delta)-\theta_cc^\delta\right)\phi= \int\rho \left(f_{2}' (c)-\theta_cc\right)\phi ,\ea\enn
where   \begin{eqnarray*}
\rho (f_{2}' (c)-\theta_cc)=\left\{\begin{aligned}
&\rho (f_{2}' (c)-\theta_cc)&& {\rm if}\,\,\rho>0,\\
&0&&{\rm if}\,\,\rho=0.
\end{aligned}\right.
\end{eqnarray*} 
Recalling \eqref{kk}, we conclude \eqref{eq5.58}.
 The proof of Theorem \ref{t} is completed.
 
 \bigskip

\section{Appendix }

{{\bf Proof of Lemma \ref{lem3.3}.}}  Let $K$ be the Laplacian  fundamental solution   in $\mathbb{R}^3$ and 
  $\Delta^{-1}(h)=K*h$ be   the convolution of $h$.  We have   \begin{equation}\label{eq3.17}
\ba
&\Vert\partial_i\Delta^{-1}(h)\Vert_{W^{1,p}(\Omega)}\leq C(\Omega,p)\Vert h\Vert_{L^p(\mathbb{R}^3)},\quad p\in (1,\infty),\\
&\Vert\partial_i\Delta^{-1}(h)\Vert_{L^{p^*}(\Omega)}\leq C(\Omega,p)\Vert\partial_i\Delta^{-1}(h)\Vert_{W^{1,p}(\mathbb{R}^3)},\quad p^*=\frac{3p}{3-p},\,\, p<3,\\
&\Vert\partial_i\Delta^{-1}(h)\Vert_{L^\infty(\Omega)}\leq C(\Omega,p)\Vert h\Vert_{L^p(\mathbb{R}^3)},\quad p>3,
\ea
\end{equation} 
where $\partial_i\Delta^{-1}(i=1,2,3)$ is  the Mikhlin multiplier, see   \cite{Lions,Stein}.  
Assume that  $h_n\rightharpoonup h$ in $L^p(\mathbb{R}^3)$,   then
\be\ba\label{eq3.18}
\partial_j\partial_i\Delta^{-1}(h_n)\rightharpoonup\partial_j\partial_i\Delta^{-1}(h)\quad {\rm in}\,\, L^p,\quad 
\partial_i\Delta^{-1}(h_n)\to \partial_i\Delta^{-1}(h)\quad {\rm in}\,\, L^q,
\ea\ee
where $q<p^*$ if $p<3$ and $q\leq \infty$ if $p>3$.

Prolonging $\rho^\epsilon$ to  $\mathbb{R}^3$ by zero, multiplying $\eqref{eq2.7}_2^i$ by $\phi\partial_i \Delta^{-1}\rho^\epsilon$ with $\phi\in C_0^\infty(\Omega)$, we obtain
\begin{equation}\label{eq3.20}
\begin{aligned}
\int\phi\rho^\epsilon\mathbb{F}(\rho^\epsilon,\u^{\eps})=&-\int\partial_i\Delta^{-1}(\rho^\epsilon)\partial_i\phi\left(\left(\ln\frac{1}{\delta}\right)^{-1}(\rho^\epsilon)^{11}+(\rho^\epsilon)^2\frac{\partial f^\delta}{\partial\rho^\epsilon}-(\lambda_1+\lambda_2){\rm div}\u^\epsilon\right)\\
&+\lambda_1\int\left(\partial_ju_{i}^\epsilon\partial_i\Delta^{-1}(\rho^\epsilon)\partial_j\phi-u_{i}^\epsilon\partial_j\partial_i\Delta^{-1}(\rho^\epsilon)\partial_j\phi+\rho^\epsilon\u^\epsilon\cdot\na\phi\right)\\
&-\int\left(\left(\rho^\epsilon\mu^\epsilon\partial_i c^\epsilon-\rho^\epsilon\frac{\partial f^\delta}{\partial c^\epsilon}\partial_i c^\epsilon\right)\phi\partial_i\Delta^{-1}(\rho^\epsilon)-(\rho^\epsilon \mathbf{g}_1+\mathbf{g}_2)\phi\partial_i\Delta^{-1}(\rho^\epsilon)\right)\\
&-\int\rho^\epsilon u_{j}^\epsilon u_{i}^\epsilon\partial_j\phi\partial_i\Delta^{-1}(\rho^\epsilon)-\int\rho^\epsilon 
u_{j}^{\epsilon} u_{i}^{\epsilon} \phi\partial_j\partial_i\Delta^{-1}(\rho^\epsilon)\\
&+\epsilon^2\int\rho^\epsilon u_{i}^\epsilon\phi\partial_i\Delta^{-1}(\rho^\epsilon)+\epsilon^4\int\na\rho^\epsilon\cdot\na u_{i}^\epsilon \phi\partial_i\Delta^{-1}(\rho^\epsilon),
\end{aligned}
\end{equation}
where the second line on the right-hand side of \eqref{eq3.20} comes from
\begin{align*}
& \int\partial_j u_{i}^\epsilon (\partial_i\Delta^{-1}(\rho^\epsilon)\partial_j\phi+\partial_j\partial_i\Delta^{-1}(\rho^\epsilon)\phi)\\
&=  \int(\partial_j u_{i}^\epsilon \partial_i\Delta^{-1}(\rho^\epsilon)\partial_j\phi-
u_{i}^\epsilon \partial_j\partial_i\Delta^{-1}(\rho^\epsilon)\partial_j\phi-u_{i}^\epsilon \partial_i\rho^\epsilon\phi)\\
&=  \int(\partial_j u_{i}^\epsilon \partial_i\Delta^{-1}(\rho^\epsilon)\partial_j\phi-
u_{i}^\epsilon\partial_j\partial_i\Delta^{-1}(\rho^\epsilon)\partial_j\phi+\rho^\epsilon \u^\epsilon\cdot\na\phi)+ \int\rho^\epsilon{\rm div}\u^\epsilon\phi.
\end{align*}
By the  regularity $(\rho^\epsilon,\u^\epsilon)\in(H^2,H_0^1)$ and  $\frac{\partial\rho^\epsilon}{\partial \vec{n}}|_{\partial \Omega}=0$, we compute 
$$-\int\rho^\epsilon u_{i}^\epsilon \phi\partial_i\partial_j\Delta^{-1}(\rho^\epsilon u_{j}^\epsilon)   = -\epsilon^4\int\rho^\epsilon u_{i}^\epsilon\phi\partial_i\Delta^{-1}({\rm div}(\textbf{1}_\Omega\na\rho^\epsilon))+\epsilon^2\int\rho^\epsilon u_{i}^\epsilon\phi\partial_i\Delta^{-1}(\rho^\epsilon-\rho_0),$$
and
\begin{equation}\label{eq3.21}
\begin{aligned}
&-\int\rho^\epsilon u^\epsilon_j u^\epsilon_i\partial_j\phi\partial_i\Delta^{-1}(\rho^\epsilon)-\int\rho^\epsilon u^\epsilon_j u^\epsilon_i\phi\partial_j\partial_i\Delta^{-1}(\rho^\epsilon)\\
& =-\int\rho^\epsilon u^\epsilon_j u^\epsilon_i\partial_j\phi\partial_i\Delta^{-1}(\rho^\epsilon)+\int u^\epsilon_i\phi[\rho^\epsilon\partial_i\partial_j\Delta^{-1}(\rho^\epsilon u^\epsilon_j)-\rho^\epsilon u^\epsilon_j\partial_j\partial_i\Delta^{-1}(\rho^\epsilon)]\\
&\quad -\int\rho^\epsilon u^\epsilon_i\phi\partial_i\partial_j\Delta^{-1}(\rho^\epsilon u^\epsilon_j)\\
&  = -\int\rho^\epsilon u^\epsilon_j u^\epsilon_i\partial_j\phi\partial_i\Delta^{-1}(\rho^\epsilon)+\int u^\epsilon_i\phi[\rho^\epsilon\partial_i\partial_j\Delta^{-1}(\rho^\epsilon u^\epsilon_j)-\rho^\epsilon u^\epsilon_j\partial_j\partial_i\Delta^{-1}(\rho^\epsilon)]\\
& \quad- \epsilon^4\int\rho^\epsilon u^\epsilon_i\phi\partial_i\Delta^{-1}({\rm div}(\textbf{1}_\Omega\na\rho^\epsilon))+\epsilon^2\int\rho^\epsilon u^\epsilon_i\phi\partial_i\Delta^{-1}(\rho^\epsilon-\rho_0).
\end{aligned}
\end{equation}
Replace the second line from the bottom in \eqref{eq3.20} by \eqref{eq3.21}  to find 
\begin{align}\label{eq3.22}
&\int\phi\rho^\epsilon\mathbb{F}(\rho^\epsilon,\u^{\eps})\notag\\
=&-\int\partial_i\Delta^{-1}(\rho^\epsilon)\partial_i\phi\left(\left(\ln\frac{1}{\delta}\right)^{-1}(\rho^\epsilon)^{11}+(\rho^\epsilon)^2\frac{\partial f^\delta}{\partial\rho^\epsilon}-(\lambda_1+\lambda_2){\rm div}\u^\epsilon\right)\notag\\
&+\lambda_1\int\left(\partial_ju_{i}^\epsilon\partial_i\Delta^{-1}(\rho^\epsilon)\partial_j\phi-u_{i}^\epsilon\partial_j\partial_i\Delta^{-1}(\rho^\epsilon)\partial_j\phi+\rho^\epsilon\u^\epsilon\cdot\na\phi\right)\notag\\
&-\int\left(\left(\rho^\epsilon\mu^\epsilon\partial_i c^\epsilon-\rho^\epsilon\frac{\partial f^\delta}{\partial c^\epsilon}\partial_i c^\epsilon\right)\phi\partial_i\Delta^{-1}(\rho^\epsilon)-(\rho^\epsilon \mathbf{g}_1+\mathbf{g}_2)\phi\partial_i\Delta^{-1}(\rho^\epsilon)\right)\notag\\
&-\int\rho^\epsilon u^\epsilon_j u^\epsilon_i\partial_j\phi\partial_i\Delta^{-1}(\rho^\epsilon)+\int u^\epsilon_i\phi[\rho^\epsilon\partial_i\partial_j\Delta^{-1}(\rho^\epsilon u^\epsilon_j)-\rho^\epsilon u^\epsilon_j\partial_j\partial_i\Delta^{-1}(\rho^\epsilon)]\notag\\
&- \epsilon^4\int\rho^\epsilon u^\epsilon_i\phi\partial_i\Delta^{-1}({\rm div}(\textbf{1}_\Omega\na\rho^\epsilon))-\na\rho^\epsilon\cdot\na u^\epsilon_i\phi\partial_i\Delta^{-1}(\rho^\epsilon)+\epsilon^2\int\rho^\epsilon u^\epsilon_i\phi\partial_i\Delta^{-1}(2\rho^\epsilon-\rho_0)\notag\\
=&\sum_{i=1}^7\mathcal{J}_i^\epsilon.
\end{align}
On the other hand, if we take $\epsilon$-limit in $\eqref{eq2.7}_2$  and then  multiply  it  by $\phi\partial_i\Delta^{-1}(\rho)$, we obtain
\begin{align}\label{eq3.23}
\int\phi\rho\overline{\mathbb{F}(\rho,\u)}=&-\int\partial_i\Delta^{-1}(\rho)\partial_i\phi\left(\left(\ln\frac{1}{\delta}\right)^{-1}\rho^{11}+\overline{\rho^2\frac{\partial f^\delta}{\partial \rho}}-(\lambda_1+\lambda_2){\rm div}\u\right)\notag\\
&+\lambda_1\int(\partial_j u^i\partial_i\Delta^{-1}(\rho)\partial_j\phi-u^i\partial_j\partial_i\Delta^{-1}(\rho)\partial_j\phi+\rho\u\cdot\na\phi)\notag\\
&-\int\left(\left(\rho\mu\partial_i c+\overline{\rho\frac{\partial f^\delta}{\partial c}}\partial_i c\right)\phi\partial_i\Delta^{-1}(\rho)-(\rho \mathbf{g}_1+\mathbf{g}_2) \phi\partial_i\Delta^{-1}(\rho)\right)\notag\\
&-\int\rho u^j u^i\partial_j\phi\partial_i\Delta^{-1}(\rho)+\int u^i\phi[\rho\partial_i\partial_j\Delta^{-1}(\rho u^j)-\rho u^j\partial_j\partial_i\Delta^{-1}(\rho)]\notag\\
=&\sum_{i=1}^5\mathcal{J}_i.
\end{align}
In terms of \eqref{eq3.22} and \eqref{eq3.23}, in order to prove \eqref{eq3.16}  we need justify 
\[\lim_{\epsilon\to 0}\mathcal{J}_i^\epsilon=\mathcal{J}_i\,\,(i=1\sim 5)\quad {\rm and}\quad \lim_{\epsilon\to 0}\mathcal{J}_i^\epsilon=0\,\,(i=6,7).\]
Making use of \eqref{eq3.18},   \eqref{eq3.3}-\eqref{eq3.9}, and the strong convergence of $\na c^{\eps}$,  we easily check  $\lim_{\epsilon\to 0}\mathcal{J}_i^\epsilon=\mathcal{J}_i$  for $i=2,3,4$. 
Due to the uniform estimates in  Lemma \ref{lem3.1} and  \eqref{eq2.224}-\eqref{eq2.246},   we see that 
\bnn\ba
|\mathcal{J}_6^\epsilon+\mathcal{J}_7^\epsilon|&\leq\epsilon^4\Vert\na\rho^\epsilon\Vert_{L^2}\Vert\rho^\epsilon\Vert_{L^3}\Vert\u^\epsilon\Vert_{L^6}+\epsilon^2\Vert\rho^\epsilon\Vert_{L^2}\Vert\u^\epsilon\Vert_{L^2}\Vert\partial_i\Delta^{-1}(2\rho^\epsilon-\rho_0)\Vert_{L^\infty}\\
&+\epsilon^4\Vert\na\rho^\epsilon\Vert_{L^2}\Vert\na\u^\epsilon\Vert_{L^2}\Vert\partial_i\Delta^{-1}(\rho^\epsilon)\Vert_{L^\infty}\\ 
&\leq C\epsilon\to 0\quad {\rm as}\,\,\,\,\eps\to 0.
\ea\enn
Finally,    $\lim_{\epsilon\to 0}\mathcal{J}_5^\epsilon=\mathcal{J}_5$ is  from  the following lemma,  by choosing  $v^\epsilon=\rho^\epsilon u^\epsilon_j$ and $w^\epsilon=\rho^\epsilon$.

\begin{lemma}(\!\!\cite[Section 5]{Evans2}) Let $r_{1}^{-1}+r_{2}^{-1}=r^{-1}\le 1$ and 
$$v^{\eps} -\!\!\!\rightharpoonup v \,\,\,{\rm in}\,\,\,\,L^{r^{1}}\quad {\rm and}\quad w^{\eps} -\!\!\!\rightharpoonup w\,\,\,{\rm in}\,\,\,\,L^{r^{2}}.$$
Then,
$$v^{\eps} \p_{i}\p_{j}\lap^{-1}(w^{\eps})-w^{\eps} \p_{i}\p_{j}\lap^{-1}(v^{\eps})=v \p_{i}\p_{j}\lap^{-1}(w)-w \p_{i}\p_{j}\lap^{-1}(v)\,\,\,{\rm in}\,\,\,\,L^{r}.$$
\end{lemma}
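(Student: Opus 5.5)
The plan is to establish the lemma in two stages: first for smooth, compactly supported data, where the expression is rewritten as a commutator, and then by density. Write $R_{ij}=\p_i\p_j\lap^{-1}$. These are Riesz-type transforms; they are symmetric, bounded on every $L^p$ with $1<p<\infty$, and satisfy $\sum_i R_{ii}=\mathrm{Id}$. The key step is the Coifman--Rochberg--Weiss / Coifman--Meyer commutator structure, in the div-curl form used by Feireisl and Lions. Write
\[
v^{\eps}R_{ij}(w^{\eps})-w^{\eps}R_{ij}(v^{\eps})=v^{\eps}R_{ij}(w^{\eps})-R_{ij}(v^{\eps})\,w^{\eps}.
\]
Apply this with $v^\eps$ replaced by the vector field $U^\eps=(v^\eps\delta_{ik})_k$ and $w^\eps$ replaced by $W^\eps=\na\p_j\lap^{-1}w^\eps$. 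Then $U^\eps$ and $V^\eps:=v^\eps R_{\cdot j}$-type combinations can be arranged so that one factor has divergence and the other has curl in a compact set of $W^{-1,s}$. Concretely, set
\[
A^\eps_k=v^\eps\,\delta_{ik}-R_{ik}(v^\eps),\qquad B^\eps_k=R_{jk}(w^\eps).
\]
Then $\mathrm{div}A^\eps=\p_iv^\eps-\p_i v^\eps=0$, and $B^\eps=\na(\p_j\lap^{-1}w^\eps)$ is a gradient, so $\mathrm{curl}B^\eps=0$. A direct computation gives
\[
A^\eps\cdot B^\eps=v^\eps R_{ij}(w^\eps)-\sum_k R_{ik}(v^\eps)R_{jk}(w^\eps).
\]
The symmetric counterpart, with the roles of $v$ and $w$ swapped, has the same quadratic term $\sum_k R_{ik}v\,R_{jk}w$. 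Subtracting the two identities therefore expresses the commutator as a difference of two div-curl products.

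Once this structure is in place, the div-curl lemma of Murat--Tartar applies. We have $A^\eps\rightharpoonup A$ in $L^{r_1}$, $B^\eps\rightharpoonup B$ in $L^{r_2}$, $\mathrm{div}A^\eps=0$ and $\mathrm{curl}B^\eps=0$, with $r_1^{-1}+r_2^{-1}=r^{-1}\le1$. Hence $A^\eps\cdot B^\eps\to A\cdot B$ in $\mathcal D'$. In the case $r<1$ excluded by hypothesis one would need Hardy-space tools, but that case does not arise here. Each product is bounded in $L^r$ by Hölder's inequality and the $L^p$-boundedness of $R_{ij}$. When $r>1$ this upgrades distributional convergence to weak convergence in $L^r$. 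When $r=1$ one only gets weak-$\ast$ convergence in measures, which is enough for the test functions used in $\mathcal J_5$ because of the extra factor $u_i\phi$.

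For the application, one checks exponents. Take $v^\eps=\rho^\eps u^\eps_j$, bounded in $L^{p}$ with $p$ close to $12\cdot6/(12+6)=4$ by Lemma \ref{lem3.1} and \eqref{eq2.224}. Take $w^\eps=\rho^\eps$, bounded in $L^{12}$. Then $r^{-1}\le 1/4+1/12<1$, so the commutator converges weakly in some $L^r$ with $r>1$. Multiplying by $u^\eps_i\phi$, which converges strongly in $L^{q}$ for every $q<6$, passes the limit in $\mathcal J_5^\eps$.

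I expect the main obstacle to be the algebraic rewriting of the commutator as a combination of div-free and curl-free products with matching quadratic terms; I would verify it first for $C_0^\infty$ data and then extend by density using $L^p$-continuity.
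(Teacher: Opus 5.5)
Your argument is correct, and it is the standard compensated-compactness (div--curl) proof behind the paper's citation of Evans' Section 5; the paper gives no proof of its own. One algebraic precision: the ``symmetric counterpart'' must exchange $(v,i)$ with $(w,j)$, namely $\tilde A_k=w\,\delta_{jk}-R_{jk}(w)$ and $\tilde B_k=R_{ik}(v)$, so that $A\cdot B-\tilde A\cdot\tilde B=vR_{ij}(w)-wR_{ij}(v)$; exchanging only $v$ and $w$ leaves a mismatched quadratic term when $i\neq j$. You are also right that at $r=1$ only distributional convergence survives: weak $L^1$ convergence can fail through concentration, so the lemma should be read with $r^{-1}<1$. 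Your claim that measure convergence would suffice against the factor $u_i^{\epsilon}\phi$, which converges only strongly in $L^q$, is unjustified, but this is moot since $r=3$ in $\mathcal{J}_5^{\epsilon}$.
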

 
 \bigskip
 
{ {\bf Proof of Lemma \ref{lem5.7}.}}
The proof  is as in  \cite{FNP} and the  main idea is to use the cutoff technique to guarantee that $(\rho,\u)$ is still a renormalized solution, so that the defection of the density can be controlled.

Define   \begin{equation}\label{eq4.58}
C^1[0,\infty)\ni T_k(z)=\left\{\begin{aligned}
&z,&& z\leq k,\\
&k+0.5,&& z\geq k+1.
\end{aligned}\right.
\end{equation}
We claim that 
\begin{equation}\label{eq4.60}
\begin{aligned}
\lim_{\delta\to 0}\int T_k(\rho^\delta)\left((\rho^\delta)^2\frac{\partial f^\delta}{\partial \rho^\delta}-(2\lambda_1+\lambda_2){\rm div}\u^\delta\right) =\int\overline{T_k(\rho)}\left(\overline{\rho^2\frac{\partial f}{\partial\rho}}-(2\lambda_1+\lambda_2){\rm div}\u\right).
\end{aligned}
\end{equation} 
In fact,  multiply   $\eqref{eq3.1}_{2}$  by $\Phi=\phi\na\Delta^{-1}(T_k(\rho^\delta))$ and    $\eqref{eq4.57}_2$  by  $\phi\na\Delta^{-1}(\overline{T_k(\rho)})$ respectively,  to deduce 
\bnn\begin{aligned}
&\int\phi T_k(\rho^\delta)\left((\rho^\delta)^2\frac{\partial f^\delta}{\partial\rho^\delta}-(2\lambda_1+\lambda_2){\rm div}\u^\delta\right)\\
& =-\int\left(\ln\frac{1}{\delta}\right)^{-1}\phi T_k(\rho^\delta)(\rho^\delta)^{11}\\
&\quad -\int\partial_i\Delta^{-1}(T_k(\rho^\delta))\partial_i\phi\left(\left(\ln\frac{1}{\delta}\right)^{-1}(\rho^\delta)^{11}+(\rho^\delta)^2\frac{\partial f^\delta}{\partial\rho^\delta}-(\lambda_1+\lambda_2){\rm div}\u^\delta\right)\\
&\quad  +\lambda_1\int\partial_j u^\delta_i\partial_i\Delta^{-1}(T_k(\rho^\delta))\partial_j\phi-u^\delta_i\partial_j\partial_i\Delta^{-1}(T_k(\rho^\delta))\partial_j\phi+T_k(\rho^\delta)\u^\delta\cdot\na\phi\\
&\quad  -\int(\rho^\delta \mathbf{g}_1+\mathbf{g}_2 )^i\phi\partial_i\Delta^{-1}(T_k(\rho^\delta))  +\frac{1}{2}\int|\na c^\delta|^2(\phi T_k(\rho^\delta)+\partial_i\phi\partial_i\Delta^{-1}(T_k(\rho^\delta)))\\
&\quad  -\int\partial_i c^\delta\partial_j c^\delta(\phi\partial_j\partial_i\Delta^{-1}(T_k(\rho^\delta)) +\partial_j\phi\partial_i\Delta^{-1}(T_k(\rho^\delta))) -\int\rho^\delta u^\delta_j u^\delta_i\partial_j\phi\partial_i\Delta^{-1}(T_k(\rho^\delta))\\
&\quad  -\int u^\delta_i\phi[\rho^\delta u^\delta_j\phi\partial_j\partial_i\Delta^{-1}(T_k(\rho^\delta))-T_k(\rho^\delta)\partial_i\partial_j\Delta^{-1}(\rho^\delta u^\delta_j)]\\
& =\sum_{i=1}^7R_i^\delta
\end{aligned}\enn
and 
\begin{align*}
&\int\phi\overline{T_k(\rho)}\left(\overline{\rho^2\frac{\partial f}{\partial \rho}}-(2\lambda_1+\lambda_2){\rm div}\u\right)\\
& =-\int\partial_i\Delta^{-1}(\overline{T_k(\rho)})\partial_i\phi\left(\overline{\rho^2\frac{\partial f}{\partial \rho}}-(\lambda_1+\lambda_2){\rm div}\u\right)\\
&\quad +\lambda_1\int\partial_j u^i\partial_i\Delta^{-1}(\overline{T_k(\rho)})\partial_j\phi- u^i\partial_j\partial_i\Delta^{-1}(\overline{T_k(\rho)})\partial_j\phi+\overline{T_k(\rho)}\u\cdot\na\phi\\
&\quad  -\int(\rho \mathbf{g}_1+\mathbf{g}_2 )\phi\partial_i\Delta^{-1}(\overline{T_k(\rho)}) +\frac{1}{2}\int|\na c|^2(\phi\overline{T_k(\rho)}+\partial_i\phi\partial_i\Delta^{-1}(\overline{T_k(\rho)}))\\
&\quad  -\int\partial_i c\partial_j c(\phi\partial_j\partial_i\Delta^{-1}(\overline{T_k(\rho)})+\partial_j\phi\partial_i\Delta^{-1}(\overline{T_k(\rho)})  -\int\rho u^j u^i\partial_j\phi\partial_i\Delta^{-1}(\overline{T_k(\rho)})\\
& \quad -\int u^i\phi[\rho u^j\partial_j\partial_i\Delta^{-1}(\overline{T_k(\rho)})-\overline{T_k(\rho)}\partial_i\partial_j\Delta^{-1}(\rho u^j)]\\
& =\sum_{i=1}^7R_i.
\end{align*}
In this connection, we use  the same argument  as  in   Lemma \ref{lem3.3} and conclude 
\bnn
\lim_{\delta\to 0}R_i^\delta=R_i\quad(i=1\sim 7),
\enn
 and therefore,  the desired  \eqref{eq4.60}   is obtained. 

Having   \eqref{eq4.60}  obtained,  we compute
\begin{equation}\label{eq4.63x}
\ba
&(2\lambda_1+\lambda_2)\lim_{\delta\to 0}\int\left(T_k(\rho^\delta){\rm div}\u^\delta-\overline{T_k(\rho)}{\rm div}\u\right)\\
& =\lim_{\delta\to 0}\int\left(T_k(\rho^\delta)(\rho^\delta)^2\frac{\partial f^\de}{\partial\rho^\delta}-\overline{T_k(\rho)}\overline{\rho^2\frac{\partial f}{\partial \rho}}\right)\\
& =\lim_{\delta\to 0}\int\left((\rho^\delta)^2\frac{\partial f^{\de}}{\partial \rho^\delta}-\rho^2\frac{\partial f(\rho,c)}{\partial\rho}\right)\left(T_k(\rho^\delta)-T_k(\rho)\right)\\
&\quad  +\int\left(\overline{\rho^2\frac{\partial f}{\partial \rho}}-\rho^2\frac{\partial f(\rho,c)}{\partial \rho}\right)\left(T_k(\rho^\delta)-T_k(\rho)\right)\\
& \geq\lim_{\delta\to 0}\int\left((\rho^\delta)^2\frac{\partial f^\delta}{\partial\rho^\delta}-\rho^2\frac{\partial f(\rho,c)}{\partial \rho}\right)\left(T_k(\rho^\delta)-T_k(\rho)\right),
\ea
\end{equation}
where the last inequality sign  is due to the concavity of $T_k$ and
\[\overline{\rho^2\frac{\partial f}{\partial \rho}}=(\gamma-1)\overline{\rho^\gamma}+\overline{\rho\ln \rho}H\geq (\gamma-1)\rho^\gamma+\rho\ln\rho H=\rho^2\frac{\partial f(\rho,c)}{\partial\rho}.\]
By the fact 
$((\rho^\delta)^\gamma-\rho^\gamma)(T_k(\rho^\delta)-T_k(\rho))\geq (T_k(\rho^\delta)-T_k(\rho))^{\gamma+1},$
 the last integral in \eqref{eq4.63x} satisfies 
\begin{align*}
&\int\left((\rho^\delta)^2\frac{\partial f^\delta}{\partial\rho^\delta}-\rho^2\frac{\partial f(\rho, c)}{\partial\rho}\right)\left(T_k(\rho^\delta)-T_k(\rho)\right)\\
& =\int(\gamma-1)((\rho^\delta)^\gamma-\rho^\gamma)(T_k(\rho^\delta)-T_k(\rho))+(\rho^\delta-\rho)H\left(T_k(\rho^\delta)-T_k(\rho)\right)\\
& \geq\int(\gamma-1)(T_k(\rho^\delta)-T_k(\rho))^{\gamma+1} +(\rho^\delta-\rho)H\left(T_k(\rho^\delta)-T_k(\rho)\right).
\end{align*}
This  and  \eqref{eq4.63x}  provide  us 
\begin{equation}\label{eq4.63}
\begin{aligned}
&(2\lambda_1+\lambda_2)\lim_{\delta\to 0}\int(T_k(\rho^\delta){\rm div}\u^\delta-\overline{T_k(\rho)}{\rm div}\u)\geq (\gamma-1)\lim_{\delta\to 0 }\int(T_k(\rho^\delta)-T_k(\rho))^{\gamma+1}.
\end{aligned}
\end{equation}

Introduce 
\[b_k(z):=L_k(z)-\left(\ln k+\int_k^{k+1}\frac{T_k(s)}{s^2}ds+1\right)z,\]
where \begin{eqnarray*}
L_k(z):=\left\{\begin{aligned}
& z\ln z,&&z\leq k,\\
&z\ln k+z\int_k^z\frac{T_k(s)}{s^2}ds,&&z\geq k.
\end{aligned}\right.
\end{eqnarray*}
We see that  $b_k(z)\in C[0,\infty)\cap C^1(0,\infty)$, $b_k'(z)=0$ if $z\geq k+1$, and $b_k'(z)z-b_k(z)=T_k(z)$. Then, from the mass equation we obtain  (approximating $b_k(z)$ near $z=0$)
${\rm div}(b_k(\rho^\delta)\u^\delta)+T_k(\rho^\delta){\rm div}\u^\delta=0$.
Consequently, 
\bnn
\int T_k(\rho^\delta){\rm div}\u^\delta=0.
\enn
Recalling the  definition of $T_{k}$,  we see that,   for any fixed   $k\in\mathbb{N}$, 
\bnn
T_k(\rho^\delta)-\!\!\!\rightharpoonup\overline{T_k(\rho)}\quad {\rm in}\,\, L^p(\Omega),\quad\forall p\in[1,\infty].
\enn
Hence,
\begin{eqnarray}\label{eq4.65}
\int\overline{T_k(\rho)}\, {\rm div}\u=\lim_{\delta\to 0}\int T_k(\rho^\delta){\rm div}\u^\delta=0.
\end{eqnarray}
By the H\"older inequality,  \eqref{eq4.63} and \eqref{eq4.65}, we have 
\bnn\ba
C\Vert T_k(\rho)-\overline{T_k(\rho)}\Vert_{L^2}&\geq(2\lambda_1+\lambda_2)\int\left(T_k(\rho)-\overline{T_k(\rho)}\right){\rm div}\u\\
&=(2\lambda_1+\lambda_2)\lim_{\delta\to 0}\int\left(T_k(\rho^\delta){\rm div}\u^\delta-\overline{T_k(\rho)}{\rm div}\u\right)\\
&\geq(\gamma-1)\lim_{\delta\to 0}\int(T_k(\rho^\delta)-T_k(\rho))^{\gamma+1},
\ea\enn
and therefore,  
\begin{equation}\label{eq4.67}
\begin{aligned}
\lim_{k\to \infty}\lim_{\delta\to 0}\Vert T_k(\rho)-T_k(\rho^\delta)\Vert_{L^{\gamma+1}}^{\gamma+1}&\leq C\lim_{k\to \infty}\Vert T_k(\rho)-\overline{T_k(\rho)}\Vert_{L^2}\\
&\leq C\lim_{k\to \infty}\lim_{\delta\to 0}(\Vert T_k(\rho)-\rho\Vert_{L^2}+\Vert T_k(\rho^\delta)-\rho^\delta\Vert_{L^2}).
\end{aligned}
\end{equation}
However, by the uniform bound 
$\Vert\rho^\delta\Vert_{L^{2}}^{2}\leq  \Vert\rho^\delta\frac{\partial f^{\de}}{\partial\rho^\delta}\Vert_{L^s}^s\leq C$,
 one has
\begin{eqnarray*}
\Vert T_k(\rho^\delta)-\rho^\delta\Vert_{L^{2}}=\Vert T_k(\rho^\delta)-\rho^\delta\Vert_{L^{2}({\rho^\delta\geq k})}\leq 2\Vert\rho^\delta\Vert_{L^{2}(\rho^\delta\geq k)}\to 0,\quad {\rm as}\,\,k\to \infty.
\end{eqnarray*} Thus, 
\begin{eqnarray}\label{eq4.68}
\lim_{k\to \infty}\lim_{\delta\to 0}(\Vert T_k(\rho)-\rho\Vert_{L^2}+\Vert T_k(\rho^\delta)-\rho^\delta\Vert_{L^2})=0.
\end{eqnarray} 
The combination of   \eqref{eq4.67} with  \eqref{eq4.68} guarantees 
\begin{align*}
&\lim_{\delta\to 0}\Vert\rho^\delta-\rho\Vert_{L^1}\\
&\leq \lim_{k\to \infty}\lim_{\delta\to 0}\left(\Vert\rho^\delta-T_k(\rho^\delta)\Vert_{L^1}+\Vert T_k(\rho^\delta)-T_k(\rho)\Vert_{L^1}+\Vert T_k(\rho)-\rho\Vert_{L^1}\right)\\
&=0.
\end{align*}

\section*{Acknowledgments.} 
The research of Z. Liang was  partially  supported by the NNSFC of China grant 12531010 and the Sichuan
Science and Technology Program grant 2025ZYD0157.
J. Shuai   was partially  supported by Civil Aviation Flight University of China Program grant 25CAFUC04076.  
D. Wang was partially  supported in part by NSF grant  DMS-2510532.

\begin {thebibliography} {99}
\bibitem{A} H. Abels, On a diffuse interface model for two-phase flows of viscus, incompressible fluids with  matched densities, Arch. Ration. Mech. Anal., 194, 463-506, 2009
\bibitem{AA} H. Abels, Existence of weak solutions for a diffuse interface model for viscous, incompressible  fluids with general densities, Comm. Math. Phys., 289,  45-73, 2009
\bibitem{AF}H. Abels, E. Feireisl,  On a diffuse interface model for a two-phase flow of compressible viscous fluids, Indiana Univ. Math. J.,  57, 659-698, 2008
\bibitem{AGG}H. Abels,  H.  Garcke,  G. Grun, Thermodynamically consistent, frame indifferent diffuse interface models for incompressible two-phase flows with different densities, Math Mod Meth Appl S.,  22, 1150013, 2012

\bibitem{AW} H. Abels,  J. Weber, Stationary Solutions for a Navier-Stokes/Cahn-Hilliard System with Singular Free Energies. In: Amann, H., Giga, Y., Kozono, H., Okamoto, H., Yamazaki, M. (eds) Recent Developments of Mathematical Fluid Mechanics. Advances in Mathematical Fluid Mechanics. Birkhauser, Basel, 2016 

\bibitem{BB}  D.  Bresch,  C.  Burtea,   Weak solutions for the stationary anisotropic and nonlocal compressible Navier-Stokes system,  J. Math. Pures  Appl.,  146(9), 183-217, 2021

\bibitem{BDMM} T. Biswas,  S.  Dharmatti,  P.  Mahendranath,  M.  Mohan,  On the stationary nonlocal Cahn-Hilliard-Navier-Stokes system: existence, uniqueness and exponential stability,  Asymptot. Anal.,  125(1-2), 59-99,  2021

\bibitem{BDM}  T.  Biswas, S.  Dharmatti, M. Mohan, Second order optimality conditions for optimal control problems governed by 2D nonlocal Cahn-Hillard-Navier-Stokes equations,  Nonlinear Stud., 28(1), 29-43,  2021 
\bibitem{BG}D. Basaric,  A. Giorgini,  Global weak solutions to a compressible Navier-Stokes/Cahn-Hilliard system with singular entropy of mixing, arXiv:2506.07835v1, 2025 

\bibitem{CFG} P. Colli, S. Frigeri,  M. Grasselli, Global existence of weak solutions to a nonlocal  Cahn-Hillard-Navier-Stokes   system, J. Math. Anal. Appl., 386, 428-444, 2012

\bibitem{Evans} L. Evans,  Partial Differential Equations, 2nd ed., Graduate Studies in Mathematics 19,
American Mathematical Society, Providence, RI, 2010
\bibitem{Evans2} L. Evans,  Weak Convergence Methods for Nonlinear Partial Differential Equations, Vol. 74, American Mathematical Society, 1990
\bibitem{FN}E.  Feireisl,   A. Novotny,  Singular limits in thermodynamics of viscous fluids,  Advances in Mathematical Fluid Mechanics,  Birkhauser, 2009
\bibitem{FNP} E.  Feireisl, A.  Novotny,  H. Petzeltova,  On the existence of globally defined weak solutions to the Navier-Stokes equations, J. Math. Fluid Mech.,  358-392, 2001
\bibitem{FSW}J.  Frehse,  M.  Steinhauer, W.  Weigant,   The Dirichlet problem for steady viscous compressible flow in three dimensions, J. Math. Pures Appl.,  97, 85-97,  2012

\bibitem{MT} A.  Mirnaville,  R.  Temam, On the Cahn-Hilliard-Oono-Navier-Stokes equations with singular potentials,  Appl. Anal., 95(12), 2609-2624, 2015  

\bibitem{F} S. Frigeri, On a nonlocal Cahn-Hilliard/Navier-Stokes system with degenerate mobility and singular potential for incompressible fluids with different densities, Ann. Inst. Henri Poincare  Anal. Non Lin'eaire., 38(3), 647-687, 2021

\bibitem{GT} D. Gilbarg,   N. Trudinger,  Elliptic Partial Differential Equations of Second Order,
2nd edn.,  Grundlehren der Mathematischen Wissenschaften, Vol. 224, SpringerVerlag, 1983

\bibitem{GGP}  C. Gal,  A. Giorgini,  M. Grasselli,   A. Poiatti, Global well posedness and convergence to equilibrium for the Abels-Garcke-Grun model with nonlocal free energy, J. Math. Pures Appl.,  178, 46-109, 2023

\bibitem{GMT} A. Giorgini, A. Miranville,  R. Temam, Uniqueness and  regularity for the Navier-Stokes-Cahn-Hilliard  system,  SIAM J. Math. Anal., 51(3), 2535-2574, 2007

\bibitem{GT} A.  Giorgini,  R.  Temam,  Weak and strong solutions to the non-homogeneous incompressible Navier-Stokes-Cahn-Hilliard system,  J. Math. Pures Appl.,  144, 194-249,  2020
\bibitem{hh} P. Hohenberg,  B. Halperin, Theory of dynamic critical phenomena, Rev. Mod. Phys., 49, 435-479, 1977

\bibitem{JZ}S. Jiang, C.  Zhou,  Existence of weak solutions to the three-dimensional steady compressible Navier-Stokes equations, Ann. Inst. H. Poincare Aal. Non Lineaire., 28,  485-498, 2011

\bibitem{KPS} S.  Ko,  P. Pustejovska,  E.  Suli,  Finite element approximation of an incompressible chemically reacting
non-Newtonian fluid, Math. Mod. Numerical Appl., 52(2), 509-541,  2018

\bibitem{KPS2} S. Ko,  E.  Suli,  Finite element approximation of steady flows of generalized Newtonian fluids with
concentration-dependent power-law index, Math. Comp.,  88(317), 1061-1090,  2019 
\bibitem{LW}Z.   Liang,  D.  Wang,  Stationary Cahn-Hilliard-Navier-Stokes equations for the diffuse interface model of compressible flows, Math Mod Meth Appl S., 30(12),  2445-2486,  2020
\bibitem{LW2} Z.   Liang,  D.  Wang,  Weak solutions to the stationary Cahn-Hilliard/Navier-Stokes equations for compressible fluids, J. Nonlinear Sci., 32, 41,  2022
\bibitem{Lions} P.  Lions,  Mathematical topics in fluid mechanics, Vol. 2. Compressible models. Oxford Lecture Series in Mathematics and its Applications, Vol. 10, Oxford Science Publications/The Clarendon Press, 1998
\bibitem{MP} P.  Mucha,  M. Pokorny,  On a new approach to the issue of existence and regularity for the steady compressible Navier-Stokes equations, Nonlinearity, 19, 1747-1768, 2006
\bibitem{MPZ}P.  Mucha,  M. Pokorny,   E.  Zatorska,  Existence of stationary weak solutions for compressible heat conducting flows, in Handbook of Mathematical Analysis in  Mechanics of Viscous Fluids, Springer, 2595-2662, 2018
\bibitem{NS}A.  Novotny,  I.  Straskraba,  Introduction to the Mathematical Theory of Compressible Flow, Oxford Lecture Series in Mathematics and its Applications, Vol. 27, Oxford Univ, Press, 2004

\bibitem{HKP} C. Hurm, P. Knopf,   A. Poiatti, Nonlocal-to-local convergence rates for strong solutions to a Navier-Stokes-CahnHilliard system with singular potential, Comm. Partial Differential Equations., 49(2), 1-40, 2024

\bibitem{lt} J. Lowengrub,   L. Truskinovsky, Quasi-incompressible Cahn-Hilliard fluids and
topological transitions, Proc. R. Soc. Lond. Ser. A., 454, 2617-2654, 1998

\bibitem{PW} P.  Plotnikov, W.  Weigant,   Steady 3D viscous compressible flows with adiabatic exponent $\gamma\in(1,\infty)$, J. Math. Pures Appl., 104, 58-82,  2015

\bibitem{Stein} E.  Stein,   Singular integrals and differentiability properties of functions, Bulletin of the London Mathematical Society 5, 1973

\bibitem{Ziemer}W.  Ziemer, Weakly Differentiable Functions: Sobolev Spaces and Functions of Bounded Variation, Springer, 1989
 \end {thebibliography}

\end{document}